\newtheorem{theorem}{Theorem}
\theoremstyle{plain}
\newtheorem{definition}{Definition}
\newtheorem{lemma}{Lemma}
\newtheorem{proposition}{Proposition}
\newtheorem{remark}{Remark}
\numberwithin{equation}{section}
 \numberwithin{theorem}{section}
 \numberwithin{proposition}{section}
 \numberwithin{remark}{section}
 \numberwithin{definition}{section}
 \numberwithin{lemma}{section}
 \numberwithin{corollary}{section}
 \numberwithin{example}{section}
 \numberwithin{claim}{section}
\begin{document}
\title[Hyperbolic-parabolic equations in porous media]{%
Reiterated homogenization of hyperbolic-parabolic equations in domains with tiny holes}
\author{Hermann Douanla}
\address{Hermann Douanla, Department of Mathematics, University of Yaounde
1, P.O. Box 812, Yaounde, Cameroon}
\email{hdouanla@gmail.com}
\author{Erick Tetsadjio}
\address{Erick Tetsadjio, Department of Mathematics, University of Yaounde
1, P.O. Box 812, Yaounde, Cameroon}
\email{tetsadjio@gmail.com}
\date{October, 2016}
\subjclass[2010]{35B27, 76M50, 35L20}
\keywords{hyperbolic-parabolic equation, perforated domain, tiny holes, multi-scale convergence}

\begin{abstract}
The article studies the homogenization of hyperbolic-parabolic equations in porous media with tiny holes. We assume that the
holes are periodically distributed and that the coefficients of the equations are periodic. Using the multi-scale convergence method, we derive a homogenization result whose limit problem is defined on a fixed domain and is of the same type as the  problem with oscillating coefficients.
\end{abstract}

\maketitle

\section{Introduction\label{S1}}
In porous media with tiny holes, we study the asymptotic behaviour (as $\varepsilon\to 0$) of the solution to the following problem with rapidly oscillating coefficients:
\begin{equation}\label{eq1}
\left\{ \begin{aligned} \rho \left( \frac{x}{\varepsilon^2 }\right)
\frac{\partial^2 u_{\varepsilon }}{\partial t^2} + \beta \left(
\frac{x}{\varepsilon }, \frac{t}{\varepsilon^2}\right)
\frac{\partial u_{\varepsilon }}{\partial t} - \text{div}\left(
A\left(\frac{x}{\varepsilon },\frac{x}{\varepsilon ^{2}}\right)
\nabla u_{\varepsilon }\right)= & \ f \quad \text{ in }\ \
\Omega_{T}^{\varepsilon }=\Omega ^{\varepsilon }\times (0,T), \\
A\left( \frac{x}{\varepsilon },\frac{x}{\varepsilon ^{2}}\right)
\nabla u_{\varepsilon }\cdot \nu_\varepsilon = & \ 0 \quad \text{ on
}\ \ (\partial
\Omega^{\varepsilon}\setminus \partial \Omega)\times \left( 0,T\right), \\
u_{\varepsilon }= & \ 0  \quad\text{ on }\ \ (\partial \Omega^{\varepsilon}\cap
\partial \Omega) \times \left( 0,T\right), \\ u_{\varepsilon }(x,0)= & \ u^{0}(x)
 \quad \text{ in }\ \ \Omega ^{\varepsilon },\\
 \rho\left(\frac{x}{\varepsilon^2}\right)\frac{\partial u_\varepsilon}{\partial t}(x,0)= & \ \rho^\frac{1}{2}\left(\frac{x}{\varepsilon^2}\right)v^0(x)  \quad \text{ in }\ \ \Omega ^{\varepsilon }, \end{aligned}\right.
\end{equation}
where $\Omega $ is a bounded domain in $\mathbb{R}^{N}$ ($N\geq 3$) locally
located on one side of its Lipschitz boundary $\partial \Omega $, $f\in L^2(0,T;L^2(\Omega))$, $u^0\in H^1_0 (\Omega)$, $v^0\in L^2(\Omega)$,   $T>0$ is a fixed real number representing the final time of the process and $\Omega ^{\varepsilon }$ is a  domain with periodically distributed tiny holes. The coefficients $\rho$, $\beta$ and the matrix $A$ are periodic. A detailed description of the domain $\Omega^\varepsilon$ and precise assumptions  on the coefficients are given in the next section.

The equations of the form (\ref{eq1}) are usually called hyperbolic-parabolic equations (H-P equations) and appears when modelling wave processes arising for instance, in heat theory ($\rho=0$ and $\beta\neq 0$), theories of hydrodynamics, electricity, magnetism, light, sound and in elasticity theory ($\rho\neq 0$ and $\beta\neq 0$) (see e.g., \cite{evariste, steklov}). It is also well known \cite{BP, TS} that equations of the form (\ref{eq1}) model the process of small longitudinal linear elastic vibration in a thin inhomogeneous rod, in this case, $\rho\neq 0$ is the linear density of the rod,  $\beta=\beta(y)\neq 0$ the dissipation coefficient, $A$ the Young's modulus, $f$ the distribution of the density of an external force directed along the rod and $u_\varepsilon$ the displacement function.

The homogenization problem for H-P equations was first studied by Bensoussan, Lions and Papanicolau~\cite{blp} in a fixed domain by means of compactness arguments and Tatar's test function method. Bakhvalov and Panasenko~\cite{BP} considered the same problem and utilized the formal asymptotic expansion method combined with maximum principles to prove homogenization results.

To the best of our knowledge, Migorski~\cite{mgk} was the first to address the homogenization problem for H-P equations in perforated domains. In a domain perforated with holes of size $\varepsilon$, he considered a $Y$-periodic matrix $A$ and assumed some strong convergence hypotheses on $\rho^\varepsilon$ and $\beta^\varepsilon$ to prove a homogenization theorem by means of the test function method. Timofte~\cite{timofte} considered the same problem as Migorski but with $\rho^\varepsilon=\varepsilon$ and a non-linear source term. Yang and Zhao~\cite{yz} addressed the same problem as Migorski by means of the periodic unfolding method. It is worth pointing out that none of the just mentioned works falls within the framework of reiterated homogenization and those in perforated domains deal with holes of size $\varepsilon$.

In the situations where $\rho\neq 0$ and $\beta=0$, or $\rho=0$ and $\beta\neq 0$ there are numerous works that are indeed related to the homogenization problem for H-P equations. In this direction we quote~\cite{CJ, DN, DY, WD, DW15,   gaveau,   nabil, NR02, NR21, CPAA} and references therein. We also mention that Nnang~\cite{nnang} has studied the deterministic homogenization problem for weakly damped nonlinear H-P equations in a fixed domain with $\rho=1$.

 In this work, the matrix $A$ oscillates on two scale and our domain is perforated with tiny holes of size $\varepsilon^2$ so that our work falls within the scope of reiterated homogenization. Moreover, we have a time dependent function $\beta^\varepsilon$ and we utilised Nguetseng's two scale convergence method~\cite{gabi}. A passage to the limit (as $\varepsilon\to 0$) yields a macroscopic problem which is of the same type as the $\varepsilon$-problem: an H-P equation.

The paper is organized as follows. Section\,\ref{S11} deals with the geometric setting of the problem and detailed assumptions on the data.  In Section\,\ref{S2} some estimates and compactness results are proven. In Section\,\ref{S3}, we recall the basics of the multi-scale convergence theory and formulate a suitable version of its main compactness theorem to be used in the proof of our main result. We also proved some  preliminary convergence results. In the fourth section our main result is formulated and proved.

\section{Setting of the problem}\label{S11}

Let us recall here the setting for the perforated domain  $\Omega^\varepsilon$ (see e.g., the pioneering work on homogenization of differential equations  in perforated domains \cite{CJ}). Let $Z=(0,1)^{N}$ be the unit cube in $\mathbb{R}^{N}$ and let $\Theta\subset Z$ be a compact set in $\mathbb{R}^N$ with a smooth boundary $\partial \Theta$, a non-empty interior and such that the Lebesgue measure of the set $Z\setminus \Theta$ is different from zero. For $\varepsilon >0$, we set
\begin{equation*}
t^\varepsilon =\{k\in \mathbb{Z} : \varepsilon^2(k+\Theta)\subset \Omega\},
\end{equation*}%
\begin{equation*}
\Theta^\varepsilon=\bigcup_{k\in t^\varepsilon} \varepsilon^2(k+\Theta)
\end{equation*}%
and we define the porous medium as:
\begin{equation*}
\Omega ^{\varepsilon }=\Omega \setminus \Theta^\varepsilon.
\end{equation*}%
It appears by construction that $t^\varepsilon$ is finite since $\Omega$ is bounded. Hence $\Theta^\varepsilon$ is closed and $\Omega^\varepsilon$ is open.
One can observe that $\Omega^\varepsilon$ represents the subregion of $\Omega$ obtained from $\Omega$ by removing a finite number of periodically distributed holes
$\{\varepsilon^2 \left(k+\Theta\right) : k\in t^\varepsilon\}$ of size $\varepsilon^2$. In the $\varepsilon$-problem (\ref{eq1}), $\nu_\varepsilon$ is the outward unit normal to $\Omega^\varepsilon$ on $\partial
\Omega^{\varepsilon}\setminus \partial \Omega$. If we set $Z^* = Z \setminus \Theta$ and denote by $\chi_G$ the characteristic function of the set $G$, the perforated domain $\Omega^\varepsilon$ can also be defined by
\begin{equation*}
\Omega^\varepsilon =\Big\{ x \in \Omega\  :\ \chi_{Z^*}\left(  \frac{x}{\varepsilon^2}\right) =1\Big\}.
\end{equation*}
Hence
\begin{equation*}
\chi_{\Omega^\varepsilon} (x)= \chi_{Z^*}\left(  \frac{x}{\varepsilon^2}\right)\qquad (x\in \Omega).
\end{equation*}
For further needs we introduce the following Hilbert space
\begin{equation*}
V_{\varepsilon }=\{u\in H^{1}(\Omega ^{\varepsilon }):u=0\text{ on }\partial \Omega \}
\end{equation*}
endowed with the gradient norm
\begin{equation*}
\| u\| _{V_{\varepsilon }} =\| \nabla u\| _{\left(L^{2}(\Omega
^{\varepsilon })\right)^N}\quad (u\in V_{\varepsilon }).
\end{equation*}

We now state the assumptions on the data. The $\varepsilon $-problem (\ref{eq1}) is constrained as
follows:

\begin{itemize}
\item[\textbf{A1}] \textbf{Uniform ellipticity}. The matrix $A(y,z
)=(a_{ij}(y,z ))_{1\leq i,j\leq N}\in L^{\infty }(\mathbb{R}%
^{N}\times \mathbb{R}%
^{N})^{N\times N}$ is real, symmetric and there exists a positive constant $\Lambda >0$ such that
\begin{eqnarray*}
\begin{array}{l}
\left\Vert a_{ij}\right\Vert _{L^{\infty }(\mathbb{R}^{N}\times \mathbb{R}%
^{N})}\leq \Lambda\ \ \ \text{for all } \ \ 
\,\,1\leq i,j\leq N, \\
\sum\limits_{ij=1}^{N}a_{ij}(y,z )\zeta _{i}\zeta _{j}\geq \Lambda
^{-1}\left\vert \zeta \right\vert ^{2}\ \ \text{ for a.e. }(y,z )\in \mathbb{R}%
^{N}\times \mathbb{R}%
^{N}\ \text{  and all } \ \zeta \in \mathbb{R}^{N}.%
\end{array}%
\end{eqnarray*}

\item[\textbf{A2}] \textbf{Positivity of $\rho$ and $\beta$}.  The functions  $\rho(z)\in \mathcal{C}^1(\mathbb{R}^N)$ and $\beta(y,\tau)\in L^\infty(\mathbb{R}^{N}\times\mathbb{R})$ satisfy

\begin{equation}
\left\{ \begin{aligned} & \rho(z)\geq 0 \quad\text{ in } \ \mathbb{R}^N, \\ & \beta(y,\tau)\geq \alpha >0 \quad\text{a.e. in }\ \mathbb{R}^N\times\mathbb{R}. \end{aligned}\right.
\end{equation}%

\item[\textbf{A3}] \textbf{Periodicity}. Let $Y=(0,1)^N,\ Z=(0,1)^N$ and $\mathcal{T}=(0,1)$. We assume that the function  $\beta$ is $Y\times \mathcal{T}$-periodic and that for any $1\leq i,j\leq N$, the function $a_{ij}$ is $Y\times Z$-periodic. We also  assume that the function  $\rho$ is $Z$-periodic and further satisfy
$$\mathcal{M}_{Z^*}(\rho)=\int_{Z^*}\rho(z)dz>0.$$

\end{itemize}

 The main result of the paper states as follows (the matrix $\widehat{A}$ appearing therein is defined later).

\begin{theorem}
Assume that the hypotheses \textbf{A1-A3} hold and let $u_\varepsilon$ $(\varepsilon>0)$ be the unique solution to (\ref{eq1}). Then as $\varepsilon\to 0$ we have
$$
u_\varepsilon \to u_0 \qquad \text{ in }\quad L^2(\Omega_T),
$$
where $u_0\in L^2(0,T; H^1_0(\Omega))$ is the unique solution to
\begin{equation*}
\left\{
\begin{aligned}
\left(\int_{Z^*}\rho(z)dz\right)
\frac{\partial^2 u_0}{\partial t^2} +
\left( \int_0^1\!\!\!\int_{Y}\beta(y,\tau)dyd\tau \right)
\frac{\partial u_0}{\partial t}
-\frac{1}{|Z^*|}\operatorname{div}\left(\hat{A}\,\nabla_x u_0 \right)
 &= f(x,t) \quad \text{in }\  \Omega\times (0,T),\\
u_{0}&=  \ 0  \ \    \qquad \text{on }\  \partial\Omega\times (0,T),\\
 u_{0}(x,0) & = \ u^{0}(x)\qquad \text{in }\ \ \Omega,\\
 \left(\int_{Z^*}\rho(z)dz\right)\frac{\partial u_0}{\partial t}(x,0) &=\!\! \left(\int_{Z^*}\!\!\!\sqrt{\rho(z)}dz\!\right)\!v^0(x)\ \text{in }\  \Omega.
 \end{aligned}
\right.
\end{equation*}
\end{theorem}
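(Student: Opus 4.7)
My overall plan is to implement a reiterated multi-scale homogenization argument adapted to the tiny perforations and the time-oscillating damping. First, testing the weak formulation of (\ref{eq1}) against $\partial_t u_\varepsilon$ and invoking \textbf{A1}--\textbf{A3} together with Gronwall's inequality yields $\varepsilon$-uniform bounds for $\sqrt{\rho^\varepsilon}\,\partial_t u_\varepsilon$ in $L^\infty(0,T;L^2(\Omega^\varepsilon))$, for $\partial_t u_\varepsilon$ in $L^2(\Omega_T^\varepsilon)$ (via the lower bound on $\beta$), and for $\nabla u_\varepsilon$ in $L^\infty(0,T;L^2(\Omega^\varepsilon)^N)$. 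Since $Z\setminus\Theta$ is connected with smooth boundary, a standard perforated-domain extension produces $\widetilde u_\varepsilon\in L^\infty(0,T;H^1_0(\Omega))$ with comparable norms; this is what Section~\ref{S2} delivers.

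\textbf{Step 2 (multi-scale compactness and ansatz).} The reiterated multi-scale compactness theorem recalled in Section~\ref{S3} then produces, up to a subsequence, a triple $u_0 \in L^2(0,T;H^1_0(\Omega))$, $u_1 \in L^2(\Omega_T; H^1_\#(Y)/\mathbb{R})$, $u_2 \in L^2(\Omega_T\times Y; H^1_\#(Z^*)/\mathbb{R})$ with $\widetilde u_\varepsilon\to u_0$ strongly in $L^2(\Omega_T)$ (through an Aubin--Lions step against a suitable pivot space) and
$$\chi_{\Omega^\varepsilon}\nabla u_\varepsilon \;\rightharpoonup\; \chi_{Z^*}\bigl(\nabla_x u_0+\nabla_y u_1+\nabla_z u_2\bigr)$$
in the multi-scale sense on $\Omega_T\times Y\times Z$.

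\textbf{Step 3 (cell problems).} I would then plug oscillating test functions
$$\phi_\varepsilon(x,t)=\phi_0(x,t)+\varepsilon\,\phi_1\!\Bigl(x,t,\tfrac{x}{\varepsilon}\Bigr)+\varepsilon^2\phi_2\!\Bigl(x,t,\tfrac{x}{\varepsilon},\tfrac{x}{\varepsilon^2}\Bigr)$$
into the variational form of (\ref{eq1}), multiply by $\chi_{\Omega^\varepsilon}$, and pass to the multi-scale limit. Taking $\phi_0=\phi_1=0$ isolates the $z$-cell problem on $Z^*$, expressing $u_2$ linearly in $\nabla_x u_0+\nabla_y u_1$; then $\phi_0=\phi_2=0$ yields the $y$-cell problem on $Y$, expressing $u_1$ linearly in $\nabla_x u_0$. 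Composition of the two cell problems produces the constant homogenized matrix $\widehat A$ appearing in the statement.

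\textbf{Step 4 (macroscopic equation) and main obstacles.} With $\phi_1=\phi_2=0$ and $\phi_0\in C_c^\infty(\Omega_T)$, I pass to the limit in each term of the variational equation: two integrations by parts in time produce
$$\int_{\Omega_T^\varepsilon}\rho\!\Bigl(\tfrac{x}{\varepsilon^2}\Bigr)\partial_t u_\varepsilon\,\partial_t\phi_0\longrightarrow\Bigl(\int_{Z^*}\rho(z)\,dz\Bigr)\int_{\Omega_T}\partial_t u_0\,\partial_t\phi_0,$$
the damping term generates the mean $\int_0^1\!\!\int_Y\beta(y,\tau)\,dy\,d\tau$ since $\beta$ oscillates in both $x/\varepsilon$ and $t/\varepsilon^2$, and the flux term contributes $\frac{1}{|Z^*|}\operatorname{div}(\widehat A\,\nabla_x u_0)$, the $1/|Z^*|$ factor arising from the normalisation of the cell problems on $Z^*$ against macroscopic tests integrated over the full unit cell. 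The initial condition for $\partial_t u_0$ comes from $\chi_{\Omega^\varepsilon}\rho^{1/2}(x/\varepsilon^2) v^0 \rightharpoonup \bigl(\int_{Z^*}\sqrt{\rho(z)}\,dz\bigr)\,v^0$. The principal difficulties I anticipate are: (i) justifying a joint space-time two-scale convergence for $\beta(x/\varepsilon,t/\varepsilon^2)\partial_t u_\varepsilon$, whose admissible test function must oscillate simultaneously in $x/\varepsilon$ and $t/\varepsilon^2$ and requires a reiterated framework that incorporates the time variable; (ii) the possible degeneracy of $\rho$, handled by transferring all time derivatives onto $\phi_0$ and by exploiting $\mathcal{M}_{Z^*}(\rho)>0$ from \textbf{A3}; and (iii) uniqueness of the limit system, needed to upgrade the extracted subsequence to convergence of the whole family to $u_0$.
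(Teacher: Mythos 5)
Your Steps 1, 2 and the overall architecture (energy estimates, extension, multi-scale compactness, cell problems, macroscopic limit, uniqueness to upgrade the subsequence) match the paper's strategy. However, there is a genuine gap in Steps 2--3: your corrector ansatz $\phi_2(x,t,\tfrac{x}{\varepsilon},\tfrac{x}{\varepsilon^2})$ and your claim that $u_2\in L^2(\Omega_T\times Y;H^1_{\#}(Z^*)/\mathbb{R})$ both omit the fast time oscillation $\tau=t/\varepsilon^2$. This is not a cosmetic omission. Since $\rho$ oscillates at scale $x/\varepsilon^2$ and the correct second corrector is $\varepsilon^2\psi_2(x,t,\tfrac{x}{\varepsilon},\tfrac{x}{\varepsilon^2},\tfrac{t}{\varepsilon^2})$, the inertial term contributes $\frac{1}{\varepsilon^2}\int_{\Omega_T}\rho^\varepsilon\chi_{Z^*}^\varepsilon u_\varepsilon\,(\partial^2_\tau\psi_2)^\varepsilon$, which has a \emph{nonzero} limit identified in Lemma~\ref{key4} of the paper (a delicate computation requiring an auxiliary cell problem, the weighted mean-zero space $H^1_{\#\rho}(Z^*)$, and Propositions~\ref{prop8}--\ref{prop10} to give meaning to $\rho\chi_{Z^*}\partial^2_\tau u_2$ as a functional). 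Consequently the microscopic problem \eqref{eqva1} is a \emph{hyperbolic} periodic problem on $Z^*\times\mathcal{T}$, namely $\rho\,\partial^2_\tau u_2-\operatorname{div}_z(A\nabla_z u_2)=\operatorname{div}_z\bigl(A(\nabla_x u_0+\nabla_y u_1)\bigr)$ in weak form, and the correctors $\chi_i$ in \eqref{eq4.13} depend on $\tau$; the matrix $\tilde A(y)=\int_0^1\!\int_{Z^*}A(I+\nabla_z\chi)\,dz\,d\tau$ is built from their $\tau$-average. Your purely elliptic $z$-cell problem would in general yield a different (and unjustified) $\widehat A$: testing only against $\tau$-independent $\phi_2$ gives merely the $\tau$-average of the cell equation, which does not determine $\nabla_z u_2$ and hence not the effective coefficients.

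Two further points you gloss over but which the paper must (and does) prove. First, the compactness theorem only gives $u_1\in L^2(\Omega_T;L^2(\mathcal{T};H^1_{\#}(Y)))$; the fact that $u_1$ is independent of $\tau$ (so that the mesoscopic corrector is purely spatial, as you assume) is established separately in the proof of Theorem~\ref{theo6} via $\varepsilon^3$-scaled test functions, Lemma~\ref{key1} and a Fourier-series argument, using crucially $\mathcal{M}_{Z^*}(\rho)>0$. Second, your limit $\int_{\Omega_T^\varepsilon}\rho^\varepsilon\partial_t u_\varepsilon\,\partial_t\phi_0\to(\int_{Z^*}\rho)\int_{\Omega_T}\partial_t u_0\,\partial_t\phi_0$ pairs two weakly convergent sequences; it needs the multi-scale convergence \eqref{eqwms555} of $\partial_t(P_\varepsilon u_\varepsilon)$ (or, as the paper does for $L_1$, moving both time derivatives onto $\psi_0$ and using the strong $L^2$ convergence of $P_\varepsilon u_\varepsilon$). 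You correctly flag the $\beta^\varepsilon\partial_t u_\varepsilon$ term as a difficulty, but the same issue already arises in the inertial term.
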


Unless otherwise specified, vector spaces throughout are considered over $\mathbb{R}$, and scalar functions are assumed to take real values. The numerical space $\mathbb{R}^N$ and its open sets are provided with the Lebesgue measure denoted by $dx=dx_1...dx_N$. The usual gradient operator will be denoted by $\nabla$. Throughout, $C$ denotes a generic constant independent of $\varepsilon $ that can change from one line to the next. We will make use of the following notations. The centered dot stands for the Euclidean scalar product in $\mathbb{R}^{N}$ while the absolute value or modulus is denoted by $|\cdot |$. 

Let
$F(\mathbb{R}^m),\ (m\geq 3\ \text{integer})$ be a given function space and let $U$ be a bounded domain in $\mathbb{R}^m$. The Lebesgue measure of $U$ is denoted by $|U|$ and the mean value of a function $v$ over $U$ is denoted and defined by
$$\mathcal{M}_U(v)=\frac{1}{|U|}\int_U v(x)\, dx.$$
We denote by $F_{per}(U)$ the space of functions in $F_{loc}(\mathbb{R}^m)$ (when it makes sense) that are $U$-periodic, and by
$F_{\#} (U)$ the space of those functions $v\in
F_{per}(U)$ with $\int_U v(y)dy=0$. 

The letter $E$ denotes throughout a family of  strictly positive real numbers
$(0<\varepsilon<1)$ admitting $0$ as accumulation point while a  fundamental sequence is any ordinary
sequence of real numbers $0 < \varepsilon_n < 1$, such that $\varepsilon_n\to 0$ as $n\to +\infty$. The time derivatives $\frac{\partial u}{\partial t}$  and $\frac{\partial^2 u}{\partial t^2}$ are sometimes denoted by $u'$ and $u''$, respectively. For $\varepsilon>0$ the functions $x\mapsto\chi_{Z^*}(\frac{x}{\varepsilon^2})$, $x\mapsto\rho(\frac{x}{\varepsilon^2})$, $(x,t)\mapsto\beta(\frac{x}{\varepsilon},\frac{t}{\varepsilon^2})$ and $x\mapsto A(\frac{x}{\varepsilon},\frac{x}{\varepsilon^2})$ are sometimes denoted by $\chi_{Z^*}^\varepsilon$,  $\rho^\varepsilon$, $\beta^\varepsilon$ and $A^\varepsilon$, respectively.

\section{Estimates and compactness results\label{S2}}

We recall that (\cite[Theorem 1.1]{blp}) for any $\varepsilon>0$ the evolution problem \eqref{eq1} admits a unique solution $u_\varepsilon$ that satisfies 
\begin{align*}
&u_\varepsilon \in L^\infty(0,T; V_\varepsilon) \cap L^2(0,T; V'_\varepsilon), \\&
u'_\varepsilon \in L^2(0, T; L^2(\Omega^\varepsilon)), \  \  \ \
\sqrt{\rho^\varepsilon}u'_\varepsilon \in L^\infty(0,T;  L^2(\Omega^\varepsilon)),\\&
\rho^\varepsilon u''_\varepsilon \in L^2(0,T; V'_\varepsilon) \ \ \text{and }\  \\&
u_\varepsilon(0)= u^0 ,\ \ \   \rho^\varepsilon u'_{\varepsilon}(0)= \sqrt{\rho^\varepsilon}v^0.
\end{align*}

\begin{proposition}\label{prop2}
Under the hypotheses {\text (A1)--(A3)}, the following estimates hold:
\begin{gather}  \label{est1}
\|u_\varepsilon\|_{L^\infty(0,T; V_\varepsilon)}\leq C, \\
 \label{est2}
 \|\sqrt{\rho^\varepsilon}u'_\varepsilon \|_{L^\infty(0,T;  L^2(\Omega^\varepsilon))} \leq C, \\
\label{est3}
\|u'_\varepsilon\|_{L^2(0, T; L^2(\Omega^\varepsilon))}\leq C,\\
\label{est4}
\|\rho^\varepsilon u''_\varepsilon\|_{L^2(0,T; V'_\varepsilon)}\leq C,
\end{gather}
where $C$ is a positive constant which does not depend on
$\varepsilon$.
\end{proposition}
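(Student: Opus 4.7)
The plan is to carry out the standard energy method: multiply the PDE by $u'_\varepsilon$ (justified via the regularity $u'_\varepsilon \in L^2(0,T;L^2(\Omega^\varepsilon))$ and $\rho^\varepsilon u''_\varepsilon \in L^2(0,T;V'_\varepsilon)$ recalled just before the proposition), integrate by parts in space using the boundary conditions in \eqref{eq1}, and exploit the symmetry of $A$ to turn the elliptic term into a time derivative. Concretely, pairing the equation with $u'_\varepsilon$ yields
\begin{equation*}
\tfrac{1}{2}\tfrac{d}{dt}\!\Big[\!\int_{\Omega^\varepsilon}\!\rho^\varepsilon|u'_\varepsilon|^2\,dx+\!\int_{\Omega^\varepsilon}\!A^\varepsilon\nabla u_\varepsilon\cdot\nabla u_\varepsilon\,dx\Big]+\!\int_{\Omega^\varepsilon}\!\beta^\varepsilon|u'_\varepsilon|^2\,dx=\!\int_{\Omega^\varepsilon}\!f\,u'_\varepsilon\,dx,
\end{equation*}
because $\rho^\varepsilon$ depends only on $x$ and $A^\varepsilon$ is symmetric and independent of $t$. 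I would then bound the right-hand side by Cauchy--Schwarz and Young's inequality $\int f u'_\varepsilon\le \frac{1}{2\alpha}\|f\|^2+\frac{\alpha}{2}\|u'_\varepsilon\|^2$, and use \textbf{A2} ($\beta^\varepsilon\ge\alpha$) to absorb $\frac{\alpha}{2}\|u'_\varepsilon\|^2$ on the left.

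Next I would handle the initial energy. From \textbf{A1} one has $\int A^\varepsilon\nabla u^0\cdot\nabla u^0\,dx\le \Lambda\|\nabla u^0\|^2_{L^2(\Omega)}$, which is $\varepsilon$-independent since $u^0\in H^1_0(\Omega)$. For the kinetic term I use the initial condition $\rho^\varepsilon u'_\varepsilon(0)=\sqrt{\rho^\varepsilon}\,v^0$, which yields $\sqrt{\rho^\varepsilon}\,u'_\varepsilon(0)=v^0$ on $\{\rho^\varepsilon>0\}$ and hence $\int\rho^\varepsilon|u'_\varepsilon(0)|^2\,dx\le\|v^0\|^2_{L^2(\Omega)}$ (this is the subtle point, as $\rho$ is allowed to vanish). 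Integrating in time from $0$ to $t\le T$ and using \textbf{A1} to get $\Lambda^{-1}\|\nabla u_\varepsilon(t)\|^2\le\int A^\varepsilon\nabla u_\varepsilon\cdot\nabla u_\varepsilon\,dx$ immediately delivers \eqref{est1}, \eqref{est2}, and \eqref{est3} with a constant depending only on $\|f\|_{L^2(0,T;L^2(\Omega))}$, $\|u^0\|_{H^1_0(\Omega)}$, $\|v^0\|_{L^2(\Omega)}$, $\Lambda$ and $\alpha$.

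For \eqref{est4} I would use the equation itself: for $\varphi\in V_\varepsilon$,
\begin{equation*}
\langle\rho^\varepsilon u''_\varepsilon,\varphi\rangle_{V'_\varepsilon,V_\varepsilon}=\int_{\Omega^\varepsilon}\!f\varphi\,dx-\int_{\Omega^\varepsilon}\!\beta^\varepsilon u'_\varepsilon\varphi\,dx-\int_{\Omega^\varepsilon}\!A^\varepsilon\nabla u_\varepsilon\cdot\nabla\varphi\,dx.
\end{equation*}
Using $\|\beta^\varepsilon\|_\infty\le\|\beta\|_\infty$, the $V_\varepsilon$-norm on $\varphi$, and the uniform Poincar\'e inequality $\|\varphi\|_{L^2(\Omega^\varepsilon)}\le C\|\nabla\varphi\|_{L^2(\Omega^\varepsilon)}$ (valid since $\varphi=0$ on $\partial\Omega$, with $C$ depending only on $\mathrm{diam}(\Omega)$), one bounds the dual norm pointwise in $t$ by $C(\|f(t)\|+\|u'_\varepsilon(t)\|+\|\nabla u_\varepsilon(t)\|)$; squaring and integrating in $t$, together with \eqref{est1} and \eqref{est3}, yields \eqref{est4}.

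The main (minor) obstacle is the rigorous justification of multiplying by $u'_\varepsilon$ when the test-function space $V_\varepsilon$ is $\varepsilon$-dependent and $\rho$ can degenerate: the regularity recalled before Proposition~\ref{prop2} is exactly what makes $\frac{d}{dt}\int\rho^\varepsilon|u'_\varepsilon|^2\,dx=2\langle\rho^\varepsilon u''_\varepsilon,u'_\varepsilon\rangle$ legitimate in a distributional sense, and the initial-condition interpretation above ensures the right-hand side of the integrated energy identity is uniformly bounded. Everything else is a straightforward application of Gronwall (here, direct integration suffices since the energy inequality has no $E(t)$ on the right).
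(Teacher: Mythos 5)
Your proposal is correct and follows essentially the same route as the paper: the energy identity obtained by pairing the equation with $u'_\varepsilon$, exploiting the symmetry of $A$ and the time-independence of $\rho^\varepsilon$ and $A^\varepsilon$, absorbing the Young-inequality term via $\beta^\varepsilon\ge\alpha$, bounding the initial energy by $\Lambda\|\nabla u^0\|^2+\|v^0\|^2$, and reading off \eqref{est4} from the equation by duality. Your explicit treatment of the degenerate set $\{\rho^\varepsilon=0\}$ when converting $\rho^\varepsilon u'_\varepsilon(0)=\sqrt{\rho^\varepsilon}\,v^0$ into the bound $\int\rho^\varepsilon|u'_\varepsilon(0)|^2\le\|v^0\|^2_{L^2(\Omega)}$ is a point the paper passes over silently, but otherwise the two arguments coincide.
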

\begin{proof}
We follow \cite{blp}. Let $t\in [0,T]$. We multiply the first equation of (\ref{eq1}) by $u'_\varepsilon$ and integrate over $\Omega^\varepsilon$ to get
\begin{equation*}
\int_{\Omega^\varepsilon} \left[\rho\left( \frac{x}{\varepsilon^2}\right)u''_\varepsilon u'_\varepsilon
+ \beta\left( \frac{x}{\varepsilon}, \frac{t}{\varepsilon^2}\right)(u'_\varepsilon)^2
- u'_\varepsilon \operatorname{div}\left( A\left(\frac{x}{\varepsilon
}, \frac{x}{\varepsilon ^{2}}\right) \nabla u_{\varepsilon }  \right) \right]dx = \int_{\Omega^\varepsilon} fu'_\varepsilon dx.
\end{equation*}
Which also writes
\begin{equation}\label{eq2.55}
 \begin{gathered}
\frac{1}{2}\int_{\Omega^\varepsilon}\rho\left( \frac{x}{\varepsilon^2}\right)\left[ (u'_\varepsilon)^2\right]'dx
+ \int_{\Omega^\varepsilon}  \beta\left( \frac{x}{\varepsilon}, \frac{t}{\varepsilon^2}\right)(u'_\varepsilon)^2 dx
 -  \int_{\Omega^\varepsilon} u'_\varepsilon  \operatorname{div}\left( A\left(\frac{x}{\varepsilon
}, \frac{x}{\varepsilon ^{2}}\right) \nabla u_{\varepsilon }  \right)dx =  \int_{\Omega^\varepsilon} fu'_\varepsilon dx.
 \end{gathered}
\end{equation}
But
\begin{align*}
 \begin{gathered}
\frac{1}{2}\int_{\Omega^\varepsilon}\rho\left( \frac{x}{\varepsilon^2}\right)\left[ (u'_\varepsilon)^2\right]'dx =
\frac{1}{2}\frac{d}{dt}\left(\int_{\Omega^\varepsilon}\rho\left( \frac{x}{\varepsilon^2}\right)\left[ (u'_\varepsilon)^2\right]dx \right)= \frac{1}{2}\frac{d}{dt}\left(\rho\left( \frac{x}{\varepsilon^2}\right)u'_\varepsilon, u'_\varepsilon \right)_{L^2(\Omega^\varepsilon)}
\end{gathered}
\end{align*}
and
\begin{equation*}
\int_{\Omega^\varepsilon}  \beta\left( \frac{x}{\varepsilon}, \frac{t}{\varepsilon^2}\right)(u'_\varepsilon)^2 dx
= \left(\beta^\varepsilon u'_\varepsilon, u'_\varepsilon \right)_{L^2(\Omega^\varepsilon)},
\end{equation*}
so that, on setting $\int_{\Omega^\varepsilon}A^\varepsilon\nabla u_\varepsilon \nabla u'_\varepsilon dx
 :=\mathcal{A}^\varepsilon\left( u_\varepsilon, u'_\varepsilon \right)$, the Green formula
\begin{equation*}
 -  \int_{\Omega^\varepsilon} u'_\varepsilon  \operatorname{div}\left( A\left(\frac{x}{\varepsilon
}, \frac{x}{\varepsilon ^{2}}\right) \nabla u_{\varepsilon }  \right)dx = \int_{\Omega^\varepsilon}A^\varepsilon\nabla u_\varepsilon \nabla u'_\varepsilon dx
 \end{equation*}
 and the following consequence of the symmetry  hypothesis on $A$
 \begin{equation*}
 -  \int_{\Omega^\varepsilon} u'_\varepsilon  \operatorname{div}\left( A\left(\frac{x}{\varepsilon
}, \frac{x}{\varepsilon ^{2}}\right) \nabla u_{\varepsilon }  \right)dx =
\frac{1}{2}\frac{d}{dt} \mathcal{A}^\varepsilon\left( u_\varepsilon, u_\varepsilon \right)
\end{equation*}
allow us to rewrite (\ref{eq2.55}) as follows
 \begin{equation}\label{forest1}
 \frac{1}{2}\frac{d}{dt}\left(\rho^\varepsilon u'_\varepsilon, u'_\varepsilon \right)_{L^2(\Omega^\varepsilon)} +
  \left(\beta^\varepsilon u'_\varepsilon, u'_\varepsilon \right)_{L^2(\Omega^\varepsilon)}+
  \frac{1}{2}\frac{d}{dt} \mathcal{A}^\varepsilon\left( u_\varepsilon, u_\varepsilon \right) = (f,u'_\varepsilon)_{L^2(\Omega^\varepsilon)}.
 \end{equation}
We now integrate (\ref{forest1}) on $[0,t]$ and obtain
\begin{equation*}
\begin{gathered}
\frac{1}{2}\left(\rho^\varepsilon u'_\varepsilon (t), u'_\varepsilon (t)\right)_{L^2(\Omega^\varepsilon)} -\frac{1}{2}\left(\rho^\varepsilon u'_\varepsilon (0), u'_\varepsilon (0) \right)_{L^2(\Omega^\varepsilon)}
+\frac{1}{2} \mathcal{A}^\varepsilon\left( u_\varepsilon (t), u_\varepsilon (t) \right) - \frac{1}{2} \mathcal{A}^\varepsilon\left( u_\varepsilon (0), u_\varepsilon (0) \right)  \\ + \int_0^t  \left(\beta^\varepsilon u'_\varepsilon(s), u'_\varepsilon(s) \right)_{L^2(\Omega^\varepsilon)}ds = \int_0^t (f(s),u'_\varepsilon (s))_{L^2(\Omega^\varepsilon)}ds.
\end{gathered}
\end{equation*}
Making use of the initial conditions, we get
\begin{equation*}
\begin{gathered}
\frac{1}{2}\| \sqrt{\rho^\varepsilon} u'_\varepsilon(t)\|^2_{L^2(\Omega^\varepsilon)} + \frac{1}{2} \mathcal{A}^\varepsilon\left( u_\varepsilon (t), u_\varepsilon (t) \right) + \int_0^t  \left(\beta^\varepsilon u'_\varepsilon(s), u'_\varepsilon(s) \right)_{L^2(\Omega^\varepsilon)}ds\\ =
\frac{1}{2} \mathcal{A}^\varepsilon\left( u^0, u^0 \right) + \frac{1}{2}\| v^0\|_{L^2(\Omega^\varepsilon)}^2 + \int_0^t (f(s),u'_\varepsilon (s))_{L^2(\Omega^\varepsilon)}ds.
\end{gathered}
\end{equation*}
Using the positivity of $\beta$, the boundedness and ellipticity hypotheses on $A$, the Cauchy-Schwartz and Young's inequalities, one readily arrives at
\begin{equation*}
\begin{gathered}
\| \sqrt{\rho^\varepsilon} u'_\varepsilon(t)\|^2_{L^2(\Omega^\varepsilon)} +
\frac{1}{\Lambda}\|  u_\varepsilon(t)\|^2_{V_\varepsilon} +
 2\alpha \int_0^t \|  u'_\varepsilon(s)\|^2_{L^2(\Omega^\varepsilon)}ds \\
 \leq  \Lambda \| \nabla u^0 \|^2_{L^2(\Omega)^N} + \| v^0 \|^2_{L^2(\Omega)} +
 \alpha  \int_0^t \|  u'_\varepsilon(s)\|^2_{L^2(\Omega^\varepsilon)}ds +
 \frac{2}{\alpha}\int_0^t \| f(s)\|^2_{L^2(\Omega)}ds,
\end{gathered}
\end{equation*}
which implies (\ref{est1})-(\ref{est3}) as easily seen from
\begin{equation}\label{est}
\begin{gathered}
\| \sqrt{\rho^\varepsilon} u'_\varepsilon(t)\|^2_{L^2(\Omega^\varepsilon)} +
\|  u_\varepsilon(t)\|^2_{V_\varepsilon} +
 \int_0^t \|  u'_\varepsilon(s)\|^2_{L^2(\Omega^\varepsilon)}ds \\
 \leq C\left(  \|  u^0 \|^2_{H^1_0(\Omega)} + \| v^0 \|^2_{L^2(\Omega)} + \| f \|^2_{L^2(0,T;L^2(\Omega))} \right).
 \end{gathered}
\end{equation}
We use the main equation in (\ref{eq1}) to deduce that
$$
\| \rho^\varepsilon u''_\varepsilon \|_{L^2(0,T; V'_\varepsilon)} = \| -\beta^\varepsilon u'_\varepsilon +
 \operatorname{div}\left( A^\varepsilon \nabla u_\varepsilon \right) + f  \|_{L^2(0,T; V'_\varepsilon)}\leq C,
$$
which completes the proof.
\end{proof}

Since solutions of (\ref{eq1}) are defined on $\Omega_T^\varepsilon = (0,T) \times \Omega^\varepsilon$ but not on
$\Omega_T = (0,T) \times \Omega$, we introduce a family of  extension operators so that the sequence of extensions to $\Omega$  of solutions to (\ref{eq1}) lives in a fixed space in which we can study its  asymptotic behaviour. The following result is a classical extension property \cite{CJ, mgk}.

\begin{proposition}\label{prop4}
For any $\varepsilon > 0$, there exists a bounded linear operator  $P_\varepsilon$ such that
$$P_\varepsilon \in \mathcal{L}\left( L^2(0,T; V_\varepsilon) \ ; \ L^2(0,T;H^1_0(\Omega)) \right)
\cap \mathcal{L}\left( L^2(0,T; L^2(\Omega^\varepsilon)) \ ; \  L^2(0,T;L^2(\Omega)) \right)$$
and
\begin{gather}
P_\varepsilon u = u\quad \text{a.e. in }\ \ \Omega^\varepsilon_T,   \\
P_\varepsilon u' = (P_\varepsilon u)'\quad \text{a.e. in }\ \ \Omega^\varepsilon_T,\\
\|P_\varepsilon u\|_{L^2(0,T;L^2(\Omega))}\leq C \|u\|_{L^2(0,T;L^2(\Omega^\varepsilon))},\\
\|P_\varepsilon u\|_{L^2(0,T;H^1_0(\Omega))}\leq C \|u\|_{L^2(0,T;V_\varepsilon)}.
\end{gather}
\end{proposition}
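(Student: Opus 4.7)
The plan is to construct $P_\varepsilon$ by reproducing, at the $\varepsilon^2$-scale, a single extension operator defined once and for all on the reference cell. I start from the classical Cioranescu--Saint Jean Paulin extension lemma, cited by the authors in \cite{CJ}: since $\partial\Theta$ is smooth, the compact set $\Theta$ is disjoint from $\partial Z$, and $|Z^*|>0$, there exists a bounded linear operator $\Pi\colon H^1(Z^*)\to H^1(Z)$ satisfying $(\Pi v)_{|Z^*}=v$ together with
\[
\|\Pi v\|_{L^2(Z)}\le C_0\|v\|_{L^2(Z^*)},\qquad \|\nabla\Pi v\|_{L^2(Z)^N}\le C_0\|\nabla v\|_{L^2(Z^*)^N},
\]
where $C_0$ depends only on $Z$ and $\Theta$. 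This cell-level input is the engine from which everything else scales.

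Next, for $v\in V_\varepsilon$, I define $P_\varepsilon v$ cell by cell. On each perforated cell $\varepsilon^2(k+Z)$ with $k\in t^\varepsilon$ I set
\[
(P_\varepsilon v)(\varepsilon^2(k+y))=(\Pi v_k)(y),\qquad v_k(y):=v(\varepsilon^2(k+y)),\ y\in Z,
\]
and on the remainder of $\Omega$---the union of cells meeting $\partial\Omega$, none of which are perforated according to the definition of $t^\varepsilon$---I simply set $P_\varepsilon v=v$. Since $\Pi v_k=v_k$ on $Z^*$, the two prescriptions agree on $\Omega^\varepsilon$, giving $P_\varepsilon v=v$ a.e.\ in $\Omega^\varepsilon$. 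The disjointness of $\Theta$ from $\partial Z$ forces the trace of $\Pi v_k$ on $\partial Z$ to coincide with $v_k|_{\partial Z}$, which guarantees continuous matching across inter-cell faces and with the unperforated boundary cells; consequently $P_\varepsilon v\in H^1(\Omega)$, and because $v=0$ on $\partial\Omega$ while $P_\varepsilon v=v$ near $\partial\Omega$, in fact $P_\varepsilon v\in H^1_0(\Omega)$. Linearity is inherited from $\Pi$.

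For the norm estimates I use the change of variables $x=\varepsilon^2(k+y)$, so that $\nabla_x=\varepsilon^{-2}\nabla_y$ and $dx=\varepsilon^{2N}dy$. A one-cell computation yields
\[
\int_{\varepsilon^2(k+Z)}|\nabla P_\varepsilon v|^2\,dx=\varepsilon^{2N-4}\!\!\int_Z|\nabla_y\Pi v_k|^2\,dy\le C_0^2\,\varepsilon^{2N-4}\!\!\int_{Z^*}|\nabla_y v_k|^2\,dy=C_0^2\!\!\int_{\varepsilon^2(k+Z^*)}|\nabla v|^2\,dx,
\]
and the analogous estimate holds for $L^2$ norms. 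Summing over $k\in t^\varepsilon$ and adding the trivial boundary-cell contribution delivers the two spatial inequalities with $\varepsilon$-independent constants. The time-dependent statements are obtained by applying $P_\varepsilon$ pointwise in $t$: for $u\in L^2(0,T;V_\varepsilon)$ set $(P_\varepsilon u)(t)=P_\varepsilon(u(t))$ for a.e.\ $t$; the spatial bounds combined with Bochner integrability yield $P_\varepsilon u\in L^2(0,T;H^1_0(\Omega))$ with the stated norm control, and the $L^2(\Omega^\varepsilon)$-to-$L^2(\Omega)$ statement follows by extending $P_\varepsilon$ by density from $V_\varepsilon$ to $L^2(\Omega^\varepsilon)$ via the $L^2$ cell estimate. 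The identity $P_\varepsilon u'=(P_\varepsilon u)'$ then follows from linearity and continuity of $P_\varepsilon$: testing against $\varphi\in\mathcal{D}(0,T)$ and pulling the bounded operator through the Bochner integral $\int_0^T u(t)\varphi'(t)\,dt$ gives the identity distributionally in time.

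The only real subtlety---and what I expect to be the main obstacle in the write-up---is the gluing check: one must verify that the scaled reference extensions on adjacent perforated cells have matching traces, so that $P_\varepsilon v$ genuinely lies in $H^1(\Omega)$ and not merely in $H^1$ of each cell separately. This is precisely where the geometric hypothesis that $\Theta$ sits strictly inside $Z$ is used; without it, the traces on $\partial Z$ would no longer simply equal $v_k$ and the cell-by-cell construction would fail. Once this point is settled, the rest is a careful bookkeeping of the $\varepsilon^2$-scaling.
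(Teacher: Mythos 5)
The paper does not prove this proposition at all: it is stated as ``a classical extension property'' with a pointer to \cite{CJ} and \cite{mgk}. Your proposal supplies the standard proof behind that citation --- the Cioranescu--Saint Jean Paulin reference-cell extension $\Pi\colon H^1(Z^*)\to H^1(Z)$, rescaled to each cell $\varepsilon^2(k+Z)$ --- and the argument is essentially correct. In particular you handle the two points that actually matter: (i) you use the cell estimates $\|\Pi v\|_{L^2(Z)}\le C\|v\|_{L^2(Z^*)}$ and $\|\nabla \Pi v\|_{L^2(Z)}\le C\|\nabla v\|_{L^2(Z^*)}$ \emph{separately}, which is exactly what makes the rescaled constants $\varepsilon$-independent (a bound of $\|\Pi v\|_{H^1}$ by the full $H^1(Z^*)$-norm would pick up a factor $\varepsilon^{-2}$ after the change of variables); and (ii) the gluing across cell faces, which works because $\Theta$ is compact in the open cube $Z$, so $\Pi v_k=v_k$ in a neighbourhood of $\partial Z$ and the extended pieces inherit the trace matching already enjoyed by $v\in H^1(\Omega^\varepsilon)$. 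The commutation $P_\varepsilon u'=(P_\varepsilon u)'$ via testing against $\varphi\in\mathcal D(0,T)$ and pulling the bounded operator through the Bochner integral is also fine.

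One caveat on the geometry: you assert that the cells meeting $\partial\Omega$ are never perforated, but with the paper's definition $t^\varepsilon=\{k:\varepsilon^2(k+\Theta)\subset\Omega\}$ the membership test is on the \emph{hole}, not on the cell, so a cell straddling $\partial\Omega$ may still carry a hole. For such a $k$ the datum $v_k$ is only defined on $Z^*\cap\varepsilon^{-2}(\Omega-\varepsilon^2 k)$ rather than on all of $Z^*$, and your cell-by-cell formula does not literally apply. This is a standard nuisance, fixed either by strengthening the definition of $t^\varepsilon$ to require $\varepsilon^2(k+\overline Z)\subset\Omega$ (as in many treatments following \cite{CJ}) or by a separate local extension for the finitely many boundary holes; it does not affect the uniformity of the constants. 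Apart from this one sentence, the write-up is the standard argument and would serve as a legitimate proof of the proposition the paper leaves to the references.
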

An immediate consequence of Proposition~\ref{prop2} and Proposition~\ref{prop4} is the following estimates that will be useful in the sequel.
\begin{proposition}\label{prop5}
Let $\varepsilon>0$ and let $u_\varepsilon$ be the solution to (\ref{eq1}). There
exists a constant $C>0$ independent of $\varepsilon$ such that
\begin{gather}
\label{eq2.122} \|P_\varepsilon u_\varepsilon \|_{L^2(0,T;H^1_0(\Omega))}\leq C,  \\
\|(P_\varepsilon u_\varepsilon)'\|_{L^2(0,T;L^2(\Omega))} \leq C, \label{eq2.133}\\
\|(P_\varepsilon u_\varepsilon)'\|_{L^2(0,T;H^{-1}(\Omega))}  \leq C.\label{eq2.144}
\end{gather}
\end{proposition}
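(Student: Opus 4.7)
The plan is to combine the bounds on $u_\varepsilon$ from Proposition~\ref{prop2} with the continuity properties of the extension operator $P_\varepsilon$ from Proposition~\ref{prop4}, with each of the three estimates reducing to an immediate algebraic manipulation. The only care needed is in matching function-space indices (gradient vs.\ $L^2$ norms, and time-integrability exponents).

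For the first estimate I would apply the continuity bound $\|P_\varepsilon u\|_{L^2(0,T;H^1_0(\Omega))}\leq C\|u\|_{L^2(0,T;V_\varepsilon)}$ of Proposition~\ref{prop4} to $u=u_\varepsilon$. Since the time interval $(0,T)$ is bounded, estimate (\ref{est1}) of Proposition~\ref{prop2} gives $\|u_\varepsilon\|_{L^2(0,T;V_\varepsilon)}\leq T^{1/2}\|u_\varepsilon\|_{L^\infty(0,T;V_\varepsilon)}\leq C$, from which (\ref{eq2.122}) follows. For the second estimate I would use the commutation identity $P_\varepsilon u' = (P_\varepsilon u)'$ of Proposition~\ref{prop4} to rewrite $(P_\varepsilon u_\varepsilon)' = P_\varepsilon (u'_\varepsilon)$, then apply the continuity bound $\|P_\varepsilon u\|_{L^2(0,T;L^2(\Omega))}\leq C\|u\|_{L^2(0,T;L^2(\Omega^\varepsilon))}$ to $u=u'_\varepsilon$ and invoke estimate (\ref{est3}) of Proposition~\ref{prop2} to obtain (\ref{eq2.133}).

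The third estimate (\ref{eq2.144}) is simply a consequence of (\ref{eq2.133}) together with the continuous Sobolev embedding $L^2(\Omega)\hookrightarrow H^{-1}(\Omega)$, which yields $\|(P_\varepsilon u_\varepsilon)'\|_{L^2(0,T;H^{-1}(\Omega))}\leq C\|(P_\varepsilon u_\varepsilon)'\|_{L^2(0,T;L^2(\Omega))}\leq C$.

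There is no real obstacle here; the proposition is labeled as an ``immediate consequence'' in the text for good reason. The only subtlety worth flagging is verifying that $u'_\varepsilon$ indeed lies in $L^2(0,T;L^2(\Omega^\varepsilon))$ so that Proposition~\ref{prop4} can be applied, but this is guaranteed by the regularity statement that precedes Proposition~\ref{prop2}. Consequently the proof is essentially a sequence of three short invocations, and I would write it in roughly half a page.
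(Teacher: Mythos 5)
Your argument is correct and is exactly the route the paper intends: the paper gives no written proof, stating only that the proposition is an immediate consequence of Propositions~\ref{prop2} and~\ref{prop4}, and your three invocations (continuity of $P_\varepsilon$ in the gradient norm plus (\ref{est1}) and boundedness of $(0,T)$; the commutation $P_\varepsilon u'=(P_\varepsilon u)'$ plus (\ref{est3}); the embedding $L^2(\Omega)\hookrightarrow H^{-1}(\Omega)$) supply precisely the omitted details.
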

We are now in position to formulate our first compactness result.
\begin{theorem}\label{theo1}
The sequence $(P_\varepsilon u_\varepsilon)_{\varepsilon > 0}$ is relatively compact in $L^2(0,T;L^2(\Omega))$.
\end{theorem}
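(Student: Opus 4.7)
The plan is to apply the Aubin--Lions compactness lemma, for which the three estimates of Proposition~\ref{prop5} provide precisely the required ingredients. Indeed, with $\Omega$ being a bounded Lipschitz domain, one has the chain of continuous embeddings
\begin{equation*}
H^1_0(\Omega) \hookrightarrow L^2(\Omega) \hookrightarrow H^{-1}(\Omega),
\end{equation*}
in which the first embedding is compact by the Rellich--Kondrachov theorem. This is the functional-analytic setting to which Aubin--Lions applies.

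Next, I would combine this with the bounds from Proposition~\ref{prop5}: estimate \eqref{eq2.122} says that $(P_\varepsilon u_\varepsilon)_\varepsilon$ is bounded in $L^2(0,T; H^1_0(\Omega))$, while estimate \eqref{eq2.144} says that $((P_\varepsilon u_\varepsilon)')_\varepsilon$ is bounded in $L^2(0,T; H^{-1}(\Omega))$. Applying the Aubin--Lions lemma with $X = H^1_0(\Omega)$, $Y = L^2(\Omega)$, $Z = H^{-1}(\Omega)$ and exponents $p=q=2$ then yields the relative compactness of $(P_\varepsilon u_\varepsilon)_\varepsilon$ in $L^2(0,T; L^2(\Omega))$, which is exactly the conclusion sought. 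Note that \eqref{eq2.133} is even stronger than what we need for the time derivative, so we only use \eqref{eq2.144} (or equivalently the weaker bound derived from \eqref{eq2.133} via the continuous embedding $L^2(\Omega)\hookrightarrow H^{-1}(\Omega)$).

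There is essentially no obstacle here: the nontrivial work was already carried out in constructing the extension operator $P_\varepsilon$ (Proposition~\ref{prop4}) and in the a priori estimates of Proposition~\ref{prop2}, both of which have already been established. In particular, the passage from the perforated domain $\Omega^\varepsilon$ to the fixed domain $\Omega$ via $P_\varepsilon$ is crucial, since Aubin--Lions cannot be applied directly on the $\varepsilon$-dependent spaces $V_\varepsilon$. Once the extended sequence lives in fixed spaces over $\Omega$, compactness is a direct consequence of the classical lemma, and the proof consists of citing Aubin--Lions together with the three uniform bounds listed above.
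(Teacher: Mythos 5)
Your proposal is correct and follows essentially the same route as the paper: both invoke the Aubin--Lions lemma with the triple $H^1_0(\Omega)\hookrightarrow L^2(\Omega)\hookrightarrow H^{-1}(\Omega)$ and the uniform bounds \eqref{eq2.122} and \eqref{eq2.144} from Proposition~\ref{prop5}. The paper merely packages this by naming the space $W$ and citing the compactness of the injection $W\subset\subset L^2(0,T;L^2(\Omega))$, so no further comment is needed.
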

\begin{proof}
It is a consequence of proposition \ref{prop5} and a classical embedding result. We define
$$W= \{u\in L^2(0,T;H^1_0(\Omega)) : \  u'\in L^2(0,T;H^{-1}(\Omega)) \}$$
and endow it  with the norm
$$\| u \|_W = \| u \|_ {L^2(0,T;H^1_0(\Omega))} + \| u \|_{L^2(0,T;H^{-1}(\Omega))}\quad (u\in W). $$
It is well known from the Aubin-Lions' lemma that the injection $W\subset \subset L^2(0,T; L^2(\Omega))$  is compact. The proof is completed since the sequence $(P_\varepsilon u_\varepsilon)_{\varepsilon>0}$ is bounded in $W$ as seen from (\ref{eq2.122})-(\ref{eq2.144}).
\end{proof}

\section{Multiscale convergence and preliminary results\label{S3}}
In this section, we recall the definition and main compactness theorem of the multi-scale convergence theory \cite{AB96, gabi}. We also adapt some existing results in this method to our framework. We eventually prove
some preliminary convergence results needed in the homogenization
process of problem (\ref{eq1}).

\subsection{Multiscale convergence method}
\begin{definition} \label{d1}
\textbf{$\bullet$} A sequence $(u_\varepsilon)_{\varepsilon\in E}\subset
L^2(\Omega_T)$ is said to weakly multi-scale converge towards
$u_0\in L^2(\Omega_T\times Y\times Z\times \mathcal{T})$ (denoted $u_\varepsilon\xrightarrow{w-ms} u_0$) in $L^2(\Omega_T)$, if
as $ \varepsilon\to 0$,
\begin{equation}  \label{eqwms1}
\begin{aligned}
&\int_{\Omega_T}u_\varepsilon(x,t)\varphi\left(x,t,\frac{x}{\varepsilon},
\frac{x}{\varepsilon^2},\frac{t}{\varepsilon^2}\right)\,dx\,dt
&\to\iiiint_{\Omega_T\times Y\times Z\times
\mathcal{T}}u_0(x,t,y,z,\tau)\varphi(x,t,y,z,\tau)
\,dx\,dt\,dy\,dz\,d\tau
\end{aligned}
\end{equation}
for all $\varphi\in L^2(\Omega_T;\mathcal{C}_{\text per}(Y\times
Z\times \mathcal{T }))$.

\textbf{$\bullet$} A sequence $(u_\varepsilon)_{\varepsilon\in E}\subset
L^2(\Omega_T)$ is said to strongly multi-scale converge towards
$u_0\in L^2(\Omega_T\times Y\times Z\times \mathcal{T})$ (denoted $u_\varepsilon\xrightarrow{s-ms} u_0$) in $L^2(\Omega_T)$, if it weakly  multi-scale converges to $u_0$ in $ L^2(\Omega_T\times
Y\times Z\times \mathcal{T})$ and further satisfies
\begin{equation*} 
\|u_{\varepsilon}\|_{L^2(\Omega_T)} \to \|u_0\|_{L^2(\Omega_T\times
Y\times Z\times\mathcal{T})} \quad \text{as } \varepsilon \to 0.
\end{equation*}
\end{definition}
\begin{remark}\label{r2}\  \begin{itemize}
 \item[(i)]Let $u\in L^{2}(\Omega _{T};\mathcal{C}_{\text per}(Y\times
Z\times \mathcal{T}))$ and define for $\varepsilon \in E$ \ \
$u^{\varepsilon }:\Omega _{T}\to
\mathbb{R}$ by
$$u^{\varepsilon }(x,t)=u\left(x,t,\frac{x}{ \varepsilon
},\frac{x}{\varepsilon ^{2}},\frac{t}{\varepsilon ^{2}}\right)
\ \ \ \ \ \ \text{for}\ \ \ \ \  (x,t)\in \Omega _{T}.$$
Then $ u^{\varepsilon }\xrightarrow{w-ms}u$ and $u^{\varepsilon }
\xrightarrow{s-ms}u$ in $L^{2}(\Omega _{T})$ as $ \varepsilon \to 0$.
We also have $u^{\varepsilon }\to \widetilde{u}$ in $L^{2}(\Omega
_{T})$ -weak as $\varepsilon \to 0$, with
\[
\widetilde{u}(x,t)=\iiint_{Y\times Z\times \mathcal{T}}u({\cdot
,\cdot ,y,z,\tau })\,dy\,dz\,d\tau .
\]
\item[(ii)] Let $u\in \mathcal{C}(\overline{\Omega } _{T};L_{\text
per}^{\infty }(Y\times Z\times \mathcal{T}))$ and define $
u^{\varepsilon }$ like in (i) above. Then $u^{\varepsilon }
\xrightarrow{w-ms}u$ in $L^{2}(\Omega _{T})$
 as $\varepsilon \to 0$.
\item[(iii)] If $(u_{\varepsilon })_{\varepsilon \in E}\subset
L^{2}(\Omega _{T})$ and $u_{0}\in L^{2}(\Omega _{T}\times Y\times Z\times
\mathcal{T})$ are such that $u_\varepsilon \xrightarrow{w-ms}u_0$  in $L^{2}(\Omega
_{T})$, then
\eqref{eqwms1} still holds for $\varphi \in
\mathcal{C}(\overline{\Omega } _{T};L_{\text per}^{\infty }(Y\times
Z\times \mathcal{T}))$.

\item[(iv)] Since $\chi_{\Omega^\varepsilon} (x)= \chi_{Z^*}\left(  \frac{x}{\varepsilon^2}\right)$ for almost every $x\in \Omega$ and any $\varepsilon\in E$, we deduce from (ii) above that, as $\varepsilon \to 0$, $\chi_{\Omega^\varepsilon} \xrightarrow{w-ms}\chi_{Z^*}$ in $L^2(\Omega)$.
\end{itemize}
\end{remark}
The following two theorems are the backbone of the multi-scale convergence method \cite{AB96, gabi}.
\begin{theorem}\label{theo3}
Any bounded sequence in $L^2(\Omega_T)$ admits a weakly multi-scale convergent subsequence.
\end{theorem}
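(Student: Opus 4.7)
The plan is to implement the standard Nguetseng--Allaire--Briane compactness scheme adapted to the three-scale setting $(y,z,\tau)=(x/\varepsilon,x/\varepsilon^2,t/\varepsilon^2)$ at hand. Let $(u_\varepsilon)_{\varepsilon\in E}$ be a bounded sequence in $L^2(\Omega_T)$, say $\|u_\varepsilon\|_{L^2(\Omega_T)}\le C$. I work with the Banach space
\[
\Phi := L^2\bigl(\Omega_T;\mathcal{C}_{per}(Y\times Z\times\mathcal{T})\bigr),
\]
whose separability follows from that of $\mathcal{C}_{per}(Y\times Z\times\mathcal{T})$ (continuous functions on a compact metric torus), and define the linear functionals $L_\varepsilon \in \Phi'$ by
\[
L_\varepsilon(\varphi) := \int_{\Omega_T} u_\varepsilon(x,t)\,\varphi\!\left(x,t,\tfrac{x}{\varepsilon},\tfrac{x}{\varepsilon^2},\tfrac{t}{\varepsilon^2}\right)dx\,dt.
\]
The goal is to extract a weak-$*$ limit of $L_\varepsilon$ and realize it as integration against the desired $u_0$.

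First, I would establish a uniform bound on $(L_\varepsilon)$ in $\Phi'$. From the pointwise estimate $|\varphi^\varepsilon(x,t)|\le\|\varphi(x,t,\cdot,\cdot,\cdot)\|_{\mathcal{C}_{per}}$ one gets $\|\varphi^\varepsilon\|_{L^2(\Omega_T)}\le\|\varphi\|_\Phi$, and Cauchy--Schwarz then yields $|L_\varepsilon(\varphi)|\le C\|\varphi\|_\Phi$. Since $\Phi$ is separable, the sequential Banach--Alaoglu theorem extracts a subsequence (not relabelled) and a functional $L\in\Phi'$ such that $L_\varepsilon(\varphi)\to L(\varphi)$ for every $\varphi\in\Phi$.

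Second, I would represent $L$ by an element of $L^2(\Omega_T\times Y\times Z\times\mathcal{T})$. Applying the strong multi-scale convergence of Remark~\ref{r2}(i) to the continuous periodic integrand $|\varphi|^2$ gives
\[
\|\varphi^\varepsilon\|_{L^2(\Omega_T)}^2 \longrightarrow \|\varphi\|_{L^2(\Omega_T\times Y\times Z\times\mathcal{T})}^2 \qquad (\varepsilon\to 0),
\]
so passing to the limit in $|L_\varepsilon(\varphi)|\le C\|\varphi^\varepsilon\|_{L^2(\Omega_T)}$ yields the refined bound $|L(\varphi)|\le C\|\varphi\|_{L^2(\Omega_T\times Y\times Z\times\mathcal{T})}$. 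As $\Phi$ is dense in the Hilbert space $L^2(\Omega_T\times Y\times Z\times\mathcal{T})$, $L$ extends uniquely by continuity, and the Riesz representation theorem produces $u_0\in L^2(\Omega_T\times Y\times Z\times\mathcal{T})$ satisfying
\[
L(\varphi) = \iiiint_{\Omega_T\times Y\times Z\times\mathcal{T}} u_0\,\varphi\, dx\,dt\,dy\,dz\,d\tau,
\]
which is precisely $u_\varepsilon\xrightarrow{w-ms} u_0$ in the sense of Definition~\ref{d1}.

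The only non-routine ingredient is the convergence $\|\varphi^\varepsilon\|_{L^2(\Omega_T)}^2\to\|\varphi\|_{L^2(\Omega_T\times Y\times Z\times\mathcal{T})}^2$, i.e.\ Remark~\ref{r2}(i), and this is where the scale compatibility must be genuinely exploited. I expect this to be the main obstacle. The cleanest route is to verify it first on tensor products $\psi(x,t)g(y)h(z)k(\tau)$ with $g,h,k$ trigonometric polynomials, using the separation $\varepsilon^{-2}\gg\varepsilon^{-1}$ to rule out resonances between the $y$- and $z$-variables, and then to extend by density via the uniform bound established in the first step. With this piece in hand---or imported directly from \cite{AB96, gabi}---the remainder of the argument is purely functional-analytic.
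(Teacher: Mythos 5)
The paper offers no proof of Theorem~\ref{theo3}: it is quoted as a known ``backbone'' result and referred to \cite{AB96, gabi}. Your argument is precisely the standard compactness scheme of those references (uniform bound on the functionals $L_\varepsilon$ over the separable space $L^2(\Omega_T;\mathcal{C}_{per}(Y\times Z\times\mathcal{T}))$, sequential Banach--Alaoglu, refined bound, Riesz representation), and it is sound; you also correctly isolate the one genuinely non-routine ingredient, namely the norm convergence $\|\varphi^\varepsilon\|_{L^2(\Omega_T)}\to\|\varphi\|_{L^2(\Omega_T\times Y\times Z\times\mathcal{T})}$, which rests on the separation of the scales $\varepsilon$ and $\varepsilon^2$ and is exactly what Remark~\ref{r2}(i) encapsulates. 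One cosmetic point: rather than applying Remark~\ref{r2}(i) to $|\varphi|^2$ (which lies only in $L^1(\Omega_T;\mathcal{C}_{per})$, so the remark as stated does not literally apply), invoke the strong multi-scale convergence $\varphi^\varepsilon\xrightarrow{s-ms}\varphi$ for $\varphi$ itself, whose definition already contains the desired norm convergence.
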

\begin{theorem}\label{theo4}
Let ($u_{\varepsilon})_{\varepsilon\in E}$ be a  bounded sequence in
$L^2(0,T;H^1_0(\Omega))$, $E$ being a fundamental sequence. There exist a subsequence still denoted by $(u_{\varepsilon})_{\varepsilon\in E}$ and
a triplet $(u_0,u_1,u_2)$ in the space
\[
L^2(0,T;H^1_0(\Omega))\times L^2(\Omega_T;L^2(\mathcal{T};H^1_{\text
per}(Y)))\times L^2(\Omega_T;L^2(Y\times \mathcal{T};H^1_{\text
per}(Z)))
\]
 such that, as $ \varepsilon\to 0$,
\begin{eqnarray}
 u_\varepsilon & \to & u_0 \qquad \qquad\qquad \qquad\text{in }
L^2(0,T;H^1_0(\Omega))\text{-weak}
\label{eqwms3} \\
 \frac{\partial u_\varepsilon}{\partial x_i} & \xrightarrow{w-ms}&
 \frac{\partial u_0}{ \partial x_i}+\frac{\partial u_1}{\partial y_i}+\frac{
\partial u_2}{\partial z_i} \quad \text{in } L^2(\Omega_T)\quad (1\leq
i\leq N).  \label{eqwms4}
\end{eqnarray}
\end{theorem}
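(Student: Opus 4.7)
The argument is a three-scale analogue of the classical Nguetseng--Allaire identification of two-scale limits of gradients, adapted to accommodate the fast time scale $\tau = t/\varepsilon^2$. The plan is to extract limits by compactness, then successively identify the macroscopic piece $\nabla_x u_0$, the mesoscopic corrector $\nabla_y u_1$, and the microscopic corrector $\nabla_z u_2$, each step relying on a carefully chosen family of divergence-free oscillating test functions that cancel the singular terms produced by the chain rule.

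By weak compactness in $L^2(0,T;H^1_0(\Omega))$, a subsequence (not relabeled) satisfies $u_\varepsilon\to u_0$ weakly in $L^2(0,T;H^1_0(\Omega))$, which is \eqref{eqwms3}. Since $(\nabla u_\varepsilon)$ is bounded in $L^2(\Omega_T)^N$, Theorem~\ref{theo3} applied component-wise yields a further subsequence and a field $v\in L^2(\Omega_T\times Y\times Z\times\mathcal{T})^N$ with $\partial u_\varepsilon/\partial x_i\xrightarrow{w-ms}v_i$. Testing the multi-scale limit against $\psi\in\mathcal{D}(\Omega_T)$, viewed as a test function independent of $(y,z,\tau)$, and passing to the limit in $\int \partial_{x_i}u_\varepsilon\,\psi\,dx\,dt=-\int u_\varepsilon\,\partial_{x_i}\psi\,dx\,dt$ produces
\[
\iiint_{Y\times Z\times\mathcal{T}} v\,dy\,dz\,d\tau = \nabla_x u_0.
\]

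To extract $u_1$, I take $\Psi_\varepsilon(x,t)=\psi(x,t)\,\phi(x/\varepsilon,t/\varepsilon^2)$ with $\psi\in\mathcal{D}(\Omega_T)$ and $\phi\in\mathcal{C}^\infty_{\mathrm{per}}(Y\times\mathcal{T})^N$ satisfying $\operatorname{div}_y\phi=0$. This constraint kills the singular $\varepsilon^{-1}$ contribution to $\operatorname{div}\Psi_\varepsilon$, leaving only the bounded term $\nabla_x\psi\cdot\phi(x/\varepsilon,t/\varepsilon^2)$. Integrating the identity $\int \nabla u_\varepsilon\cdot\Psi_\varepsilon\,dx\,dt=-\int u_\varepsilon\,\operatorname{div}\Psi_\varepsilon\,dx\,dt$ by parts and passing to the limit (multi-scale on the left, weak limit of $u_\varepsilon$ on the right) gives
\[
\iiint \Bigl(\langle v\rangle_z-\nabla_x u_0\Bigr)\cdot\psi\,\phi\,dx\,dt\,dy\,d\tau = 0,
\]
with $\langle v\rangle_z=\int_Z v\,dz$. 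The classical orthogonal decomposition of $L^2_{\mathrm{per}}(Y)^N$ into divergence-free fields and $\nabla_y$-gradients (with $(x,t,\tau)$ treated as parameters) then yields $u_1\in L^2(\Omega_T;L^2(\mathcal{T};H^1_{\mathrm{per}}(Y)))$ with $\langle v\rangle_z=\nabla_x u_0+\nabla_y u_1$.

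For $u_2$, the analogous test function is $\Psi_\varepsilon(x,t)=\psi(x,t)\,\phi(x/\varepsilon,x/\varepsilon^2,t/\varepsilon^2)$ with $\phi\in\mathcal{C}^\infty_{\mathrm{per}}(Y\times Z\times\mathcal{T})^N$ satisfying both $\operatorname{div}_y\phi=0$ and $\operatorname{div}_z\phi=0$, so that both singular terms $\varepsilon^{-1}\operatorname{div}_y\phi$ and $\varepsilon^{-2}\operatorname{div}_z\phi$ in $\operatorname{div}\Psi_\varepsilon$ vanish. Repeating the integration by parts and using $\iiiint \nabla_y u_1\cdot\psi\,\phi=0$ (a consequence of $Y$-periodicity and $\operatorname{div}_y\phi=0$) shows that the residual $R=v-\nabla_x u_0-\nabla_y u_1$ is orthogonal in $L^2(\Omega_T\times Y\times Z\times\mathcal{T})^N$ to every doubly solenoidal $\phi$, and also satisfies $\int_Z R\,dz=0$ by the previous step. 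A fiberwise Helmholtz decomposition in the $z$-variable then produces $u_2\in L^2(\Omega_T;L^2(Y\times\mathcal{T};H^1_{\mathrm{per}}(Z)))$ with $R=\nabla_z u_2$, which gives \eqref{eqwms4}. The main technical hurdle is precisely this last orthogonality/decomposition argument on the product periodic space, which is the three-scale extension of the result routinely used in the two-scale setting; once it is in place, the conclusion is immediate.
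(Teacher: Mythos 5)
The paper offers no proof of Theorem~\ref{theo4}: it is quoted verbatim as the backbone compactness theorem of multiscale convergence with a citation to Allaire--Briane and Nguetseng, so the comparison must be with the argument of those references, whose general strategy (oscillating divergence-free test fields that cancel the singular terms of the chain rule, followed by orthogonality and Helmholtz-type decompositions) your sketch reproduces for the macroscopic and mesoscopic scales. One remark on the part you do carry out: when you pass to the limit in $-\int u_\varepsilon\,\operatorname{div}\Psi_\varepsilon\,dx\,dt$ you invoke only the weak limit of $u_\varepsilon$, but $\operatorname{div}\Psi_\varepsilon=\nabla_x\psi\cdot\phi^\varepsilon$ still oscillates, so a product of two weakly convergent sequences is involved; you need the weak multiscale limit of $u_\varepsilon$ itself (Theorem~\ref{theo3}) together with the separately provable fact that this limit is independent of $y$ and $z$. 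This is routine but cannot be replaced by weak convergence alone.

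The last step, which you yourself flag as the main technical hurdle, contains a genuine gap. Orthogonality of $R=v-\nabla_xu_0-\nabla_yu_1$ to every $\phi$ with $\operatorname{div}_y\phi=\operatorname{div}_z\phi=0$, together with $\int_ZR\,dz=0$, does \emph{not} imply $R=\nabla_zu_2$. In Fourier variables on $Y\times Z$ the doubly solenoidal constraint forces $\hat\phi(k,m)\in\{k,m\}^\perp$, so your orthogonality only yields $\hat R(k,m)\in\operatorname{span}\{k,m\}$, whereas $R=\nabla_zu_2$ requires $\hat R(k,m)\parallel m$. Concretely, $R(y,z)=\bigl(2\cos(2\pi(y_1+z_2)),0,\dots,0\bigr)=\nabla_y\bigl(\tfrac1\pi\sin(2\pi(y_1+z_2))\bigr)$ is orthogonal to every doubly solenoidal field, has zero $z$-average, and is not a $\nabla_z$-gradient (cross-differentiate to see this). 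The underlying obstruction is that $\nabla_yH+\nabla_zH$ is not closed in $L^2(Y\times Z)^N$ (consider modes $k=(n,0,\dots)$, $m=(n,1,0,\dots)$), so the orthogonal complement of the doubly solenoidal fields is strictly larger than the set of exact sums $\nabla_yw_1+\nabla_zw_2$; moreover a fiberwise Helmholtz decomposition in $z$ cannot be extracted because the doubly solenoidal class contains no fields of the form $g(y)h(z)$ with $h$ merely $z$-solenoidal and $g$ non-constant, so there is no way to localize in $y$. The standard repairs are: (i) derive $\partial_{z_j}v_i=\partial_{z_i}v_j$ directly from the identity $\int\bigl(\partial_{x_i}u_\varepsilon\,\partial_{x_j}(\varepsilon^2\phi^\varepsilon)-\partial_{x_j}u_\varepsilon\,\partial_{x_i}(\varepsilon^2\phi^\varepsilon)\bigr)dx\,dt=0$ and conclude fiberwise on the torus that $v-\int_Zv\,dz$ is a $\nabla_z$-gradient, then treat $\int_Zv\,dz$ by the same curl argument in $y$ at scale $\varepsilon$; or (ii) use test fields solenoidal in $z$ only and control the resulting $\varepsilon^{-1}(\operatorname{div}_y\phi)^\varepsilon$ term by a lemma of the type of Lemma~\ref{key1}. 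Either way, this is precisely where the separation of the scales $\varepsilon$ and $\varepsilon^2$ enters the Allaire--Briane proof, and it is missing from your argument.
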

\begin{remark}\label{r3} \text
In theorem \ref{theo4}, the functions $u_1$ and $u_2$ are unique up to additive functions of variables $x,t, \tau$ and
$x, t, y, \tau $, repectively. It is therefore crucial to fix the choice of $u_1$ and $u_2$ in accordance with our needs. To formulate the version of theorem~\ref{theo4} we will use, we introduce the space
$$
H^1_{\#\rho}(Z^*)=\left\{ u\in H^1_{\text per}(Z)\ : \  \int_{Z^*}\rho(z)u(z)\,dz=0\right\}
$$
and its dense subspace
$$
\mathcal{C}^\infty_{\#\rho}(Z^*)= \left\{ u\in\mathcal{C}^\infty_{\text per}(Z)\ :\  \int_{Z^*}\rho(z)u(z)\,dz=0\right\}.
$$

\end{remark}
\begin{theorem}\label{theo5}
Let ($u_{\varepsilon})_{\varepsilon\in E}$ be a bounded sequence in
$L^2(0,T;H^1_0(\Omega))$, $E$ being a fundamental sequence. There exist a subsequence still denoted by $(u_{\varepsilon})_{\varepsilon\in E}$ and
a triplet $(u_0,u_1,u_2)$ in the space
\[
L^2(0,T;H^1_0(\Omega))\times
L^2(\Omega_T;L^2(\mathcal{T};H^1_{\#}(Y)))\times
L^2(\Omega_T;L^2(Y\times \mathcal{T};H^1_{\#\rho}(Z^*)))
\]
 such that, as $\varepsilon\to 0$,
\begin{eqnarray}
u_\varepsilon  &\to&  u_0 \qquad \ \  \text{in }
L^2(0,T;H^1_0(\Omega))\text{-weak}
\label{eqwms5} \\
\frac{\partial u_\varepsilon}{\partial x_i} & \xrightarrow{w-ms}&
 \frac{ \partial u_0}{ \partial x_i}+\frac{\partial u_1}{\partial y_i}+\frac{
\partial u_2}{\partial z_i} \quad \text{in } L^2(\Omega_T)\quad
(1\leq j\leq N).  \label{eqwms6}
\end{eqnarray}
\end{theorem}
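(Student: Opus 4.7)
The plan is to derive Theorem~\ref{theo5} directly from Theorem~\ref{theo4} by a normalization argument. Starting from a bounded sequence $(u_\varepsilon)_{\varepsilon\in E}\subset L^2(0,T;H^1_0(\Omega))$, I would first invoke Theorem~\ref{theo4} to extract a subsequence (still denoted $(u_\varepsilon)$) together with a triplet $(u_0,\widetilde u_1,\widetilde u_2)$ lying in
\[
L^2(0,T;H^1_0(\Omega))\times L^2(\Omega_T;L^2(\mathcal{T};H^1_{\text{per}}(Y)))\times L^2(\Omega_T;L^2(Y\times\mathcal{T};H^1_{\text{per}}(Z)))
\]
satisfying \eqref{eqwms3}--\eqref{eqwms4}. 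As pointed out in Remark~\ref{r3}, the correctors $\widetilde u_1$ and $\widetilde u_2$ are only determined up to additive functions independent of $y$ and $z$ respectively, so I am free to adjust them.

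Next, I introduce
\[
\phi_1(x,t,\tau)=-\int_Y \widetilde u_1(x,t,y,\tau)\,dy, \qquad u_1(x,t,y,\tau)=\widetilde u_1(x,t,y,\tau)+\phi_1(x,t,\tau),
\]
and
\[
\phi_2(x,t,y,\tau)=-\frac{1}{\mathcal{M}_{Z^*}(\rho)}\int_{Z^*}\rho(z)\,\widetilde u_2(x,t,y,z,\tau)\,dz,
\qquad u_2=\widetilde u_2+\phi_2.
\]
Hypothesis \textbf{A3} guarantees $\mathcal{M}_{Z^*}(\rho)>0$, so $\phi_2$ is well defined. By construction, $\int_Y u_1(\cdot,\cdot,y,\cdot)\,dy=0$ and $\int_{Z^*}\rho(z)u_2(\cdot,\cdot,\cdot,z,\cdot)\,dz=0$, hence $u_1$ takes values in $H^1_{\#}(Y)$ and $u_2$ takes values in $H^1_{\#\rho}(Z^*)$ for a.e.\ $(x,t,\tau)$ and a.e.\ $(x,t,y,\tau)$ respectively. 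Since $\phi_1$ is independent of $y$ and $\phi_2$ is independent of $z$, we have $\partial u_1/\partial y_i=\partial \widetilde u_1/\partial y_i$ and $\partial u_2/\partial z_i=\partial \widetilde u_2/\partial z_i$, so the gradient decomposition \eqref{eqwms6} follows immediately from \eqref{eqwms4}.

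The only point that needs checking carefully is the membership of the corrected functions in the required Bochner spaces. Here Cauchy--Schwarz will do the job: $|\phi_1(x,t,\tau)|^2\le\int_Y|\widetilde u_1|^2\,dy$ and $|\phi_2(x,t,y,\tau)|^2\le \mathcal{M}_{Z^*}(\rho)^{-2}\,\|\rho\|_{L^\infty(Z^*)}\int_{Z^*}|\widetilde u_2|^2\,dz$, and integration over the remaining variables gives $\phi_1\in L^2(\Omega_T\times\mathcal{T})$ and $\phi_2\in L^2(\Omega_T\times Y\times\mathcal{T})$. This yields $u_1\in L^2(\Omega_T;L^2(\mathcal{T};H^1_{\#}(Y)))$ and $u_2\in L^2(\Omega_T;L^2(Y\times\mathcal{T};H^1_{\#\rho}(Z^*)))$, completing the proof. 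The main (very mild) obstacle is simply to recognize that the gradient convergence is unaffected by adding functions independent of $y$, resp.\ $z$, and that the weighted normalization over $Z^*$ (rather than the full cell $Z$) remains admissible precisely because $\mathcal{M}_{Z^*}(\rho)>0$.
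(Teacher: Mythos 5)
Your proposal is correct and is essentially the argument the paper intends: Theorem~\ref{theo5} is stated right after Remark~\ref{r3}, which records that $u_1$ and $u_2$ from Theorem~\ref{theo4} are unique only up to additive functions of $(x,t,\tau)$ and $(x,t,y,\tau)$, and your renormalization (subtracting the $Y$-mean from $u_1$ and the $\rho$-weighted $Z^*$-mean from $u_2$, the latter licensed by $\mathcal{M}_{Z^*}(\rho)>0$ from \textbf{A3}) is precisely how that freedom is meant to be exercised. The gradient identities $\nabla_y u_1=\nabla_y\widetilde u_1$ and $\nabla_z u_2=\nabla_z\widetilde u_2$ and the Cauchy--Schwarz integrability check close the argument as you say.
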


\subsection{Preliminary results}
Before formulating some preliminary convergence results needed later, we recall some results on periodic distributions (see e.g., \cite{ WD,woukengaa}). As above, let $L^2_{\#\rho}(Z^*)$ denotes the space of functions $u\in L^2_{\text per}(Z)$  with $\int_{Z^*}\rho(z)u(z)dz=0$, and consider the following Gelfand triple
$$
H^1_{\#\rho}(Z^*)\subset L^2_{\#\rho}(Z^*) \subset \left(H^1_{\#\rho}(Z^*)\right)'.
$$
If $u \in L^2_{\#\rho}(Z^*)$ and $ v \in H^1_{\#\rho}(Z^*)$, we have $
\left[ u, v \right] = (u, v)
$
 where $[\cdot, \cdot]$  denotes the duality pairing between $\left(H^1_{\#\rho}(Z^*)\right)'$ and
$H^1_{\#\rho}(Z^*)$ while $(\cdot,\cdot)$ denotes the scalar product in $ L^2_{\#\rho}(Z^*)$. The topological dual of $ L^2(Y\times \mathcal{T};H^1_{\#\rho}(Z^*)) $
is $ L^2(Y\times \mathcal{T};(H^1_{\#\rho}(Z^*))')$ and $ \mathcal{C}^\infty_{\text per}(Y)\otimes \mathcal{C}^\infty_{\text per}(\mathcal{T})\otimes \mathcal{C}^\infty_{\#\rho}(Z^*)  $ is dense in
$ L^2(Y\times \mathcal{T};H^1_{\#\rho}(Z^*)) $.
\begin{proposition}\label{prop8}
Let $u\in \mathcal{D}_{\text per}'(Y\times\mathcal{T}\times Z )$ and assume
that $u$ is continuous on $ \mathcal{C}^\infty_{\text per}(Y)\otimes \mathcal{C}^\infty_{\text per}(\mathcal{T})\otimes \mathcal{C}^\infty_{\#\rho}(Z^*)$ endowed with the $L_{\text
per}^{2}(Y\times \mathcal{T};H^1_{\#\rho }(Z^*)) $-norm. Then $u\in L_{\text
per}^{2}(Y\times\mathcal{T};(H_{\#\rho }^{1}(Z^*))')$, and further
\begin{equation*}
\langle u,\varphi \rangle =\int_{0}^{1}\!\!\!\int_Y \left[ u(y,\tau ),\varphi (y
,\tau, \cdot ) \right]dyd\tau
\end{equation*}
for all $\varphi \in \mathcal{C}^\infty_{\text per}(Y)\otimes \mathcal{C}^\infty_{\text per}(\mathcal{T})\otimes \mathcal{C}^\infty_{\#\rho}(Z^*)$, where $\langle \cdot ,\cdot
\rangle $ denotes the duality pairing between $\mathcal{D}_{\text
per}'(Y\times \mathcal{T}\times Z )$ and $\mathcal{C}^\infty_{\text
per}(Y\times \mathcal{T}\times Z )$.
\end{proposition}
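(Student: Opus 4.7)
The plan is to reinterpret the continuity hypothesis as saying that $u$ is a bounded linear form on a dense subspace of the Hilbert Bochner space $\mathcal{H}:= L^2_{per}(Y\times\mathcal{T};H^1_{\#\rho}(Z^*))$, extend it by density to all of $\mathcal{H}$, and then invoke the canonical identification $\mathcal{H}'\cong L^2_{per}(Y\times\mathcal{T};(H^1_{\#\rho}(Z^*))')$ to obtain the representing element of the dual Bochner space.

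First I would verify that $H^1_{\#\rho}(Z^*)$ is a Hilbert space. Under \textbf{A3}, the linear form $v\mapsto \int_{Z^*}\rho(z)v(z)\,dz$ is continuous on $H^1_{per}(Z)$ and nontrivial (because $\mathcal{M}_{Z^*}(\rho)>0$), so its kernel is a closed subspace of $H^1_{per}(Z)$ and hence a separable Hilbert space. Consequently $\mathcal{H}$ itself is a separable Hilbert space whose topological dual is canonically identified with $L^2_{per}(Y\times\mathcal{T};(H^1_{\#\rho}(Z^*))')$; the associated duality pairing is precisely
$$(v,\varphi)\longmapsto \int_0^1\!\!\!\int_Y \bigl[v(y,\tau),\varphi(y,\tau,\cdot)\bigr]\,dy\,d\tau.$$

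Next, the continuity assumption provides a constant $C$ with $|\langle u,\varphi\rangle|\le C\|\varphi\|_{\mathcal{H}}$ for every $\varphi$ in the tensor product $\mathcal{E}:=\mathcal{C}^\infty_{per}(Y)\otimes\mathcal{C}^\infty_{per}(\mathcal{T})\otimes\mathcal{C}^\infty_{\#\rho}(Z^*)$. The density of $\mathcal{E}$ in $\mathcal{H}$ recalled immediately before the proposition allows one to extend $u$ uniquely by uniform continuity to a bounded linear form $\widetilde u$ on $\mathcal{H}$. The Bochner duality then yields a unique element $\widehat u\in L^2_{per}(Y\times\mathcal{T};(H^1_{\#\rho}(Z^*))')$ representing $\widetilde u$ through the displayed integral. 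Because $\widetilde u$ coincides with $u$ on $\mathcal{E}$ by construction, the advertised formula holds for all $\varphi\in\mathcal{E}$, and identifying $u$ with $\widehat u$ completes the proof.

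The argument is essentially soft functional analysis and I do not anticipate a genuine obstacle. The only step that demands care is the Bochner-duality identification $\mathcal{H}'\cong L^2_{per}(Y\times\mathcal{T};(H^1_{\#\rho}(Z^*))')$, which rests on the separability and reflexivity (Hilbert) of $H^1_{\#\rho}(Z^*)$; once this is in hand, the extension-by-density together with the density of the tensor product $\mathcal{E}$ in $\mathcal{H}$ delivers the result, and the precise form of the pairing automatically takes the integral shape stated in the proposition.
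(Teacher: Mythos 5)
Your argument is correct: the continuity hypothesis plus the density of $\mathcal{C}^\infty_{\text per}(Y)\otimes \mathcal{C}^\infty_{\text per}(\mathcal{T})\otimes \mathcal{C}^\infty_{\#\rho}(Z^*)$ in $L^2_{\text per}(Y\times\mathcal{T};H^1_{\#\rho}(Z^*))$ (recorded in the paper just before the statement) gives a unique bounded extension, and the identification of the dual of this Bochner space with $L^2_{\text per}(Y\times\mathcal{T};(H^1_{\#\rho}(Z^*))')$ — legitimate since $H^1_{\#\rho}(Z^*)$ is a closed subspace of $H^1_{\text per}(Z)$, hence a separable Hilbert space — yields the representing element and the stated pairing formula. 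The paper itself offers no proof of this proposition (it is recalled from the cited references on periodic distributions), and your density-plus-Bochner-duality argument is precisely the standard one used there, so there is nothing to add.
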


\begin{proposition}\label{prop10}
Let $$
V = \left\{ u\in   L^2(Y\times \mathcal{T};H^1_{\#\rho}(Z^*)):
 \rho\chi_{Z^*}\frac{\partial^2 u}{\partial \tau^2} \in  L^2(Y\times \mathcal{T};(H^1_{\#\rho}(Z^*))') \right\}.
$$
(i) The space $V$ is a reflexive Banach space when endowed with the norm
 $$
 \|u \| = \|u \|_{ L^2(Y\times \mathcal{T};H^1_{\#\rho}(Z^*))} +
  \left\|\rho\chi_{Z^*} \frac{\partial^2 u}{\partial \tau^2} \right\|_{ L^2(Y\times \mathcal{T};(H^1_{\#\rho}(Z^*))')}\qquad\qquad (u\in V).
 $$
 (ii) It holds that 
$$
\int_0^1\!\!\!\int_Y \left[ \rho\chi_{Z^*}\frac{\partial^2 u}{\partial \tau^2},v \right]dy d\tau =
\int_0^1\!\!\!\int_Y  \left[ u,\rho\chi_{Z^*}\frac{\partial^2 v}{\partial \tau^2} \right]dy d\tau\qquad\text{ for all } \ u,v\in V.
$$
\end{proposition}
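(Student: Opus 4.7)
The plan is to first establish (i) by viewing $V$ as a closed subspace of a reflexive product, and then (ii) by integrating by parts on smooth functions and extending by density.

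For (i), consider the linear map
\[
T:V\to\mathcal{H} := L^2(Y\times\mathcal{T};H^1_{\#\rho}(Z^*))\times L^2(Y\times\mathcal{T};(H^1_{\#\rho}(Z^*))'),\quad Tu := \bigl(u,\,\rho\chi_{Z^*}\partial^2 u/\partial\tau^2\bigr).
\]
This is an isometry when $\mathcal{H}$ is equipped with the sum norm. Its image is closed in $\mathcal{H}$: if $Tu_n\to(f,g)$, then $u_n\to f$ in the first factor forces $\rho\chi_{Z^*}\partial^2 u_n/\partial\tau^2 \to \rho\chi_{Z^*}\partial^2 f/\partial\tau^2$ in $\mathcal{D}'_{\text{per}}(Y\times\mathcal{T}\times Z)$, and matching this with the limit $g$ in the second factor forces $g = \rho\chi_{Z^*}\partial^2 f/\partial\tau^2$ and $f\in V$. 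Since the product of two Hilbert spaces is a reflexive Banach space and closed subspaces inherit both properties, $V$ is both complete and reflexive.

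For (ii), I first observe that the bilinear form
\[
B(u,v) := \int_0^1\!\int_Y \left[\rho\chi_{Z^*}\frac{\partial^2 u}{\partial\tau^2},v\right] dy\,d\tau \ -\ \int_0^1\!\int_Y \left[u,\rho\chi_{Z^*}\frac{\partial^2 v}{\partial\tau^2}\right] dy\,d\tau
\]
satisfies $|B(u,v)|\le 2\|u\|_V\|v\|_V$ by Cauchy--Schwarz on the duality pairings, hence is continuous on $V\times V$. It therefore suffices to prove $B\equiv 0$ on a dense subset. When $u$ and $v$ are $\mathcal{C}^\infty$ and $\mathcal{T}$-periodic in $\tau$, the duality pairings $[\cdot,\cdot]$ reduce to ordinary Lebesgue integrals over $Z^*$, and two integrations by parts in $\tau$ (with boundary terms vanishing by $\tau$-periodicity) yield the identity directly.

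The density step consists in approximating an arbitrary $u\in V$ by $\tau$-smooth elements. Viewing $u$ as a $\mathcal{T}$-periodic function in $\tau$, set $u_n := u*_\tau\phi_n$ for a smooth periodic mollifier $\phi_n$. Because $\rho$ and $\chi_{Z^*}$ depend only on $z$, convolution in $\tau$ commutes with multiplication by $\rho\chi_{Z^*}$ and with $\partial^2/\partial\tau^2$, whence
\[
\rho\chi_{Z^*}\frac{\partial^2 u_n}{\partial\tau^2} = \left(\rho\chi_{Z^*}\frac{\partial^2 u}{\partial\tau^2}\right)*_\tau\phi_n.
\]
Standard Bochner-convolution estimates then yield $u_n\to u$ in $V$ while $u_n$ is smooth in $\tau$. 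Applying the smooth case to $(u_n,v_m)$ and letting first $m\to\infty$, then $n\to\infty$, using continuity of $B$, delivers $B(u,v)=0$ for all $u,v\in V$. The main obstacle is this density passage, and it works only because of the displayed commutation identity, which in turn hinges on the $\tau$-independence of $\rho\chi_{Z^*}$; without it, a more elaborate regularization acting on all microscopic variables simultaneously would be required.
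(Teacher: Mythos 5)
Your proof is correct. Note that the paper itself offers no proof of this proposition: it is stated bare, with the surrounding text deferring to \cite{WD, woukengaa} for results of this kind on periodic distributions, so there is no in-paper argument to compare against. What you supply is the standard proof, modelled on the classical treatment of $W(0,T)=\{u\in L^2(0,T;V):u'\in L^2(0,T;V')\}$: part (i) by realizing $V$ isometrically as a closed subspace of a reflexive product (closedness of the graph following from uniqueness of distributional limits), and part (ii) by continuity of the antisymmetric bilinear form $B$ together with density of $\tau$-regular elements produced by periodic mollification in $\tau$ alone. Two small points deserve to be made explicit. First, in the closedness step the object $\rho\chi_{Z^*}\partial^2_\tau u_n$ must be interpreted as $\partial^2_\tau(\rho\chi_{Z^*}u_n)$, i.e.\ tested against $\rho\chi_{Z^*}\partial^2_\tau\varphi$, since $\chi_{Z^*}$ is not smooth and cannot multiply an arbitrary distribution; this interpretation is legitimate precisely because $\rho\chi_{Z^*}$ is independent of $\tau$ --- the same structural fact underlying your commutation identity --- and it is also the sense in which the paper (via its Proposition~\ref{prop8}) understands this object. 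Second, the mollified function $u*_\tau\phi_n$ does remain in $L^2(Y\times \mathcal{T};H^1_{\#\rho}(Z^*))$ because the linear constraint $\int_{Z^*}\rho u\,dz=0$ is $\tau$-independent and hence preserved by averaging in $\tau$; this is worth a sentence, since it is exactly the point at which a mollification in all microscopic variables would fail to respect the space. With those two remarks added, your argument is complete and self-contained, and in fact fills a gap the paper leaves to the literature.
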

We can now formulate the main result of this section.
\begin{theorem}\label{theo6} Let $(u_{\varepsilon})_{\varepsilon\in E}$ be the sequence of solution to (\ref{eq1}), $E$ being a fundamental sequence. There exist a subsequence $E'$ of $E$  and a triplet $(u_0,u_1,u_2)$ in the space
\[
L^2(0,T;H^1_0(\Omega))\times
L^2(\Omega_T; H^1_{\#}(Y))\times
L^2(\Omega_T;L^2(Y\times \mathcal{T};H^1_{\#\rho}(Z^*)))
\]
 such that, as $E'\ni \varepsilon\to 0$,
\begin{eqnarray}
P_\varepsilon u_\varepsilon & \to & u_0 \qquad \ \  \text{in }\ \ 
L^2(\Omega_T),
\label{eqwms55} \\
\frac{\partial (P_\varepsilon u_\varepsilon)}{\partial t} &\xrightarrow{w-ms}& \frac{\partial u_0}{\partial t}\qquad \text{in }\ \  L^2(\Omega_T),\label{eqwms555}\\
 \frac{\partial (P_\varepsilon u_\varepsilon)}{\partial x_i} & \xrightarrow{w-ms} &
 \frac{ \partial u_0}{ \partial x_i}+\frac{\partial u_1}{\partial y_i}+\frac{
\partial u_2}{\partial z_i} \quad \text{in }\ \  L^2(\Omega_T)\quad
(1\leq i\leq N).  \label{eqwms66}
\end{eqnarray}
\end{theorem}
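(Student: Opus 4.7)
The plan is to combine Theorem~\ref{theo1}, Theorem~\ref{theo5}, and an antiderivative--integration-by-parts device that exploits the bound on $\partial_t(P_\varepsilon u_\varepsilon)$ coming from Proposition~\ref{prop5}.

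First, Theorem~\ref{theo1} and the bound (\ref{eq2.122}) yield, along a subsequence $E'\subset E$, a limit $u_0\in L^2(0,T;H^1_0(\Omega))$ with $P_\varepsilon u_\varepsilon\to u_0$ strongly in $L^2(\Omega_T)$ and weakly in $L^2(0,T;H^1_0(\Omega))$, which proves (\ref{eqwms55}). The bound (\ref{eq2.133}) then gives $\partial_t(P_\varepsilon u_\varepsilon)\rightharpoonup \partial_t u_0$ weakly in $L^2(\Omega_T)$. Next, I apply Theorem~\ref{theo5} to $(P_\varepsilon u_\varepsilon)$, bounded in $L^2(0,T;H^1_0(\Omega))$, to extract (up to a further subsequence) $\widetilde{u}_1\in L^2(\Omega_T;L^2(\mathcal{T};H^1_\#(Y)))$, a priori $\tau$-dependent, and $u_2\in L^2(\Omega_T;L^2(Y\times\mathcal{T};H^1_{\#\rho}(Z^*)))$ satisfying $\partial_{x_i}(P_\varepsilon u_\varepsilon)\xrightarrow{w-ms}\partial_{x_i}u_0+\partial_{y_i}\widetilde{u}_1+\partial_{z_i}u_2$, the slow limit coinciding with $u_0$ by uniqueness.

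The crux is to show that $\widetilde{u}_1$ has no $\tau$-dependence. For $\varphi\in\mathcal{D}(\Omega_T)$, $\phi\in\mathcal{C}^\infty_\#(Y)$, $\chi\in\mathcal{C}^\infty_\#(\mathcal{T})$, let $X$ denote the zero-mean $1$-periodic primitive of $\chi$, so that $\chi(t/\varepsilon^2)=\varepsilon^2\partial_t[X(t/\varepsilon^2)]$. Setting
\[
I_\varepsilon:=\int_{\Omega_T}\partial_{x_i}(P_\varepsilon u_\varepsilon)\,\varphi(x,t)\,\phi(x/\varepsilon)\,\chi(t/\varepsilon^2)\,dx\,dt,
\]
successive integrations by parts in $t$ and in $x_i$ (all boundary terms vanish because $\varphi\in\mathcal{D}(\Omega_T)$ and $P_\varepsilon u_\varepsilon\in H^1_0(\Omega)$) produce a sum of integrals each carrying an explicit prefactor $\varepsilon$ or $\varepsilon^2$ in front of quantities bounded by (\ref{eq2.122})--(\ref{eq2.133}); hence $I_\varepsilon\to 0$. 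On the other hand, by the multi-scale convergence of $\partial_{x_i}(P_\varepsilon u_\varepsilon)$, the zero-mean conditions on $\phi$ and $\chi$, and the $Z$-periodicity of $u_2$, the same limit equals
\[
\int_{\Omega_T\times Y\times\mathcal{T}}\partial_{y_i}\widetilde{u}_1(x,t,y,\tau)\,\varphi(x,t)\,\phi(y)\chi(\tau)\,dy\,d\tau\,dx\,dt.
\]
Varying $(\varphi,\phi,\chi)$ and expanding $\widetilde{u}_1$ in a Fourier series in $y$ forces every non-trivial Fourier coefficient to be constant in $\tau$, so $u_1:=\widetilde{u}_1$ actually lies in $L^2(\Omega_T;H^1_\#(Y))$, establishing (\ref{eqwms66}).

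For (\ref{eqwms555}), I extract via Theorem~\ref{theo3} a weak multi-scale limit $g_0\in L^2(\Omega_T\times Y\times Z\times\mathcal{T})$ of $\partial_t(P_\varepsilon u_\varepsilon)$; testing against functions independent of $\tau$ and using the strong $L^2$-convergence $P_\varepsilon u_\varepsilon\to u_0$ identifies the $(y,z,\tau)$-mean of $g_0$ with $\partial_t u_0$. The remaining identification $g_0=\partial_t u_0$, i.e.\ the exclusion of genuine fast-scale oscillations in $g_0$, is the expected hardest part of the argument; it is obtained by a variant of the antiderivative device above combined with the control of $\rho^\varepsilon u''_\varepsilon$ in $L^2(0,T;V'_\varepsilon)$ furnished by (\ref{est4}), yielding $g_0=\partial_t u_0$ and completing the proof.
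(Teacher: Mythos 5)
Your treatment of \eqref{eqwms55} and of the $\tau$-independence of $u_1$ is correct, and the latter follows a genuinely different route from the paper. The paper derives the $\tau$-independence of $u_1$ from the equation itself: it tests \eqref{eq1} against $\varepsilon^3\psi(x,t,x/\varepsilon,t/\varepsilon^2)$, isolates the $\varepsilon^{-1}\partial^2_\tau\psi$ contribution of the $\rho^\varepsilon u''_\varepsilon$ term, and then needs the technical Lemma~\ref{key1} (an auxiliary cell problem $\Delta_y w=\varphi\rho$ plus a two-step multiscale integration by parts) to identify the limit of $\varepsilon^{-1}\int u_\varepsilon\rho^\varepsilon\chi^\varepsilon_{Z^*}\varphi^\varepsilon$ with an integral against $u_1$. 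Your argument bypasses the equation and Lemma~\ref{key1} entirely: the antiderivative device only uses the bound \eqref{eq2.133} on $\partial_t(P_\varepsilon u_\varepsilon)$ in $L^2(\Omega_T)$ (the term $\varepsilon^2\langle\partial_{x_i}\partial_t(P_\varepsilon u_\varepsilon),\varphi\phi^\varepsilon X^\varepsilon\rangle$ is $O(\varepsilon)$ since the first factor is bounded in $L^2(0,T;H^{-1}(\Omega))$ and the second has $H^1$-norm of order $\varepsilon^{-1}$), the $u_0$- and $u_2$-contributions drop out by the zero means over $Y$ and $Z$, and the conclusion follows since $\nabla_y\widetilde u_1$ has zero $Y$-mean and $\widetilde u_1\in H^1_{\#}(Y)$. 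This is cleaner and more general: it shows the $\tau$-independence of the first corrector is a consequence of the a priori estimates alone, not of the structure of the operator.

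There is, however, a genuine gap in your proof of \eqref{eqwms555}. You correctly identify the $(y,z,\tau)$-mean of $g_0$ with $\partial_t u_0$, but the elimination of the oscillations of $g_0$ in $y$, $z$ and $\tau$ is only asserted, and the one ingredient you cite, the bound \eqref{est4} on $\rho^\varepsilon u''_\varepsilon$ in $L^2(0,T;V'_\varepsilon)$, cannot do the whole job. A time-antiderivative applied to $\partial_t(P_\varepsilon u_\varepsilon)$ produces $\partial^2_t(P_\varepsilon u_\varepsilon)$, for which the only available control carries the weight $\rho$, and $\rho$ is merely nonnegative (assumption \textbf{A2}), so this step only yields the $\tau$-independence of the $\rho\chi_{Z^*}$-weighted average of $g_0$. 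To conclude one must \emph{first} remove the $z$- and $y$-dependence of $g_0$ by pairing $\partial_{x_j}(\partial_t(P_\varepsilon u_\varepsilon))$, bounded in $L^2(0,T;H^{-1}(\Omega))$ by \eqref{eq2.133}, against $\varepsilon^2$- and $\varepsilon$-scaled spatially oscillating test functions (this is what the paper does), and only \emph{then} use \eqref{est4} together with the hypothesis $\mathcal{M}_{Z^*}(\rho)>0$ to factor out $\int_{Z^*}\rho\,dz$ and deduce $\int_0^1 g_0(x,t,\tau)\,\partial_\tau\vartheta\,d\tau=0$. The ordering and the role of the nondegeneracy assumption on $\mathcal{M}_{Z^*}(\rho)$ are essential, and neither appears in your sketch; as written, the proof of \eqref{eqwms555} is incomplete.
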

The proof of Theorem~\ref{theo6} requires two preliminary results and is therefore postponed. 

\begin{lemma}\label{key4} Let $E,\ E'$,  $(u_{\varepsilon})_{\varepsilon\in E}$ and the triplet
$(u_0,u_1,u_2)$ be as in Theorem~(\ref{theo6}). It holds that
\begin{eqnarray*}
&&\lim_{ \varepsilon \to 0} \frac{1}{\varepsilon^2 }
\int_{\Omega_{T}}u_{\varepsilon
}(x,t)\rho
\left(\frac{x}{\varepsilon^2 } \right)\chi_{Z^*}\left(\frac{x}{\varepsilon^2}\right)\varphi \left(x,t,\frac{x}{\varepsilon
},\frac{x}{\varepsilon^2 },\frac{t}{\varepsilon ^{2}}\right)dx\,dt\\&&\qquad \qquad
=\iiiint_{\Omega_T\times Y \times Z \times \mathcal{T}}u_2(x, t,
y,z,
\tau)\rho(z)\chi_{Z^*}(z)\varphi(x,t,y,z,\tau)\,dx\,dt\,dy\,dz\,d\tau
\end{eqnarray*}
for all $\varphi \in \mathcal{D}(\Omega_{T})\otimes \mathcal{C}^\infty_{\text{per}}(Y)\otimes \mathcal{C}^\infty_{\text per}(Z)\otimes \mathcal{C}^\infty_{\text per}(\mathcal{T})$ such that
$$ \int_{Z}\chi_{Z^*}\rho(z)\varphi(z)dz = 0\quad  \text{ for all }\  (x,t,y,\tau)\in\Omega_T\times Y\times\mathcal{T}.$$ 
\end{lemma}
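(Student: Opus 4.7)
The plan is to build a potential in the $z$-variable whose Laplacian produces the weight $\rho\chi_{Z^*}\varphi$ and then exploit a chain-rule identity together with the multi-scale convergence of $\nabla(P_\varepsilon u_\varepsilon)$ from Theorem~\ref{theo6}. Since $\int_Z\rho(z)\chi_{Z^*}(z)\varphi(x,t,y,z,\tau)\,dz=0$ for every $(x,t,y,\tau)$, classical periodic elliptic theory supplies a function $\psi(x,t,y,z,\tau)$, $Z$-periodic with $\int_Z\psi\,dz=0$, of class $W^{2,p}_{\text{per}}(Z)$ in $z$ for every finite $p$ (so $\nabla_z\psi$ is H\"older continuous in $z$), smooth in $(x,t,y,\tau)$, with compact support in $(x,t)\in\Omega_T$ and $Y\times\mathcal{T}$-periodic in $(y,\tau)$, solving $-\Delta_z\psi=\rho\chi_{Z^*}\varphi$.

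Writing $(\,\cdot\,)^\varepsilon$ for evaluation at $(x,t,x/\varepsilon,x/\varepsilon^2,t/\varepsilon^2)$, a direct chain-rule computation yields
\begin{equation*}
\varepsilon^{2}\operatorname{div}_x\!\left[(\nabla_z\psi)^\varepsilon\right]=(\Delta_z\psi)^\varepsilon+\varepsilon\sum_{i}(\partial_{y_i}\partial_{z_i}\psi)^\varepsilon+\varepsilon^{2}\sum_{i}(\partial_{x_i}\partial_{z_i}\psi)^\varepsilon.
\end{equation*}
Substituting $(\Delta_z\psi)^\varepsilon=-\rho^\varepsilon\chi^\varepsilon_{Z^*}\varphi^\varepsilon$, observing that $\chi^\varepsilon_{Z^*}=\chi_{\Omega^\varepsilon}$ so that $u_\varepsilon$ is freely replaced by $P_\varepsilon u_\varepsilon$ inside the integral, multiplying by $P_\varepsilon u_\varepsilon$, dividing by $\varepsilon^{2}$, and integrating by parts (valid because $P_\varepsilon u_\varepsilon\in L^2(0,T;H^1_0(\Omega))$ and $\psi$ has compact $(x,t)$-support), I arrive at
\begin{equation*}
\frac{1}{\varepsilon^{2}}\!\int_{\Omega_T}\!u_\varepsilon\rho^\varepsilon\chi^\varepsilon_{Z^*}\varphi^\varepsilon\,dxdt=\!\int_{\Omega_T}\!\nabla(P_\varepsilon u_\varepsilon)\!\cdot\!(\nabla_z\psi)^\varepsilon\,dxdt+\!\int_{\Omega_T}\! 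P_\varepsilon u_\varepsilon\sum_i(\partial_{x_i}\partial_{z_i}\psi)^\varepsilon\,dxdt+\frac{1}{\varepsilon}\!\int_{\Omega_T}\! P_\varepsilon u_\varepsilon\sum_i(\partial_{y_i}\partial_{z_i}\psi)^\varepsilon\,dxdt.
\end{equation*}

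For the first term on the right I invoke Theorem~\ref{theo6}: $\nabla(P_\varepsilon u_\varepsilon)\xrightarrow{w\text{-}ms}\nabla u_0+\nabla_y u_1+\nabla_z u_2$ with $(\nabla_z\psi)^\varepsilon$ an admissible multi-scale test function, so the limit equals $\iiiint(\nabla u_0+\nabla_y u_1+\nabla_z u_2)\cdot\nabla_z\psi$; the $u_0$ and $u_1$ contributions vanish because $\int_Z\nabla_z\psi\,dz=0$ by $Z$-periodicity, while integration by parts in $z$ on the $u_2$-part (both $u_2$ and $\psi$ being $Z$-periodic) gives $-\iiiint u_2\Delta_z\psi=\iiiint u_2\rho\chi_{Z^*}\varphi$, exactly the target right-hand side. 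The second term converges by the strong $L^2$-convergence $P_\varepsilon u_\varepsilon\to u_0$ (Theorem~\ref{theo1}) together with Remark~\ref{r2}(ii) to $\iiiint u_0\sum_i\partial_{x_i}\partial_{z_i}\psi$, which vanishes since $\int_Z\partial_{z_i}\psi\,dz=0$.

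The real obstacle is the last, $1/\varepsilon$-prefactored term, whose integrand only has multi-scale limit zero. I handle it with a second potential: let $\Xi(x,t,y,z,\tau)$ be the $Z$-periodic, mean-zero-in-$z$ solution of $-\Delta_z\Xi=\sum_i\partial_{y_i}\partial_{z_i}\psi$ (solvable because the right-hand side has zero $z$-mean by $Z$-periodicity of $\nabla_z\psi$, and $\Xi$ inherits $Y\times\mathcal{T}$-periodicity together with the continuous $z$-regularity needed for admissibility, via Schauder theory from the H\"older regularity of the source). Applying the exact same chain-rule identity to $\Xi$, the offending term becomes
\begin{equation*}
\frac{1}{\varepsilon}\!\int\! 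P_\varepsilon u_\varepsilon\sum_i(\partial_{y_i}\partial_{z_i}\psi)^\varepsilon=\!\int\! P_\varepsilon u_\varepsilon\sum_i(\partial_{y_i}\partial_{z_i}\Xi)^\varepsilon+\varepsilon\!\int\!\nabla(P_\varepsilon u_\varepsilon)\!\cdot\!(\nabla_z\Xi)^\varepsilon+\varepsilon\!\int\! P_\varepsilon u_\varepsilon\sum_i(\partial_{x_i}\partial_{z_i}\Xi)^\varepsilon.
\end{equation*}
The last two summands vanish as $\varepsilon\to 0$ thanks to the explicit $\varepsilon$-factor combined with Proposition~\ref{prop5}, while the first summand converges to $\iiiint u_0\sum_i\partial_{y_i}\partial_{z_i}\Xi=0$ because $u_0$ is $y$-independent and $\int_Y\partial_{y_i}\Xi\,dy=0$ by $Y$-periodicity. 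Assembling these three limits completes the proof.
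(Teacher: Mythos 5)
Your proof is correct, and it follows the same overall strategy as the paper's: solve a periodic cell problem in $z$ whose Laplacian reproduces the weight $\rho\chi_{Z^*}\varphi$ (possible precisely because of the zero-mean hypothesis), expand $\operatorname{div}_x[(\nabla_z\psi)^\varepsilon]$ by the two-scale chain rule, and identify the surviving $O(1)$ term through the multi-scale limit \eqref{eqwms66} of $\nabla(P_\varepsilon u_\varepsilon)$ tested against $(\nabla_z\psi)^\varepsilon$. You deviate from the paper in two local but genuine ways. First, you integrate by parts against $P_\varepsilon u_\varepsilon$ over all of $\Omega_T$ rather than against $u_\varepsilon\chi_{Z^*}^\varepsilon$ over the perforated domain; this sidesteps the boundary terms on $\partial\Omega^\varepsilon\setminus\partial\Omega$ (and the $\int_{Z^*}\nabla_z w\,dz$ and $\partial\Theta$ terms in the limit) that the paper's version quietly discards, and your $u_0$, $u_1$ contributions then vanish for the clean reason that $\int_Z\nabla_z\psi\,dz=0$ over the full period cell. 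Second, for the troublesome $\varepsilon^{-1}$ cross term you introduce a second potential $\Xi$ with $-\Delta_z\Xi=\sum_i\partial_{y_i}\partial_{z_i}\psi$, whereas the paper instead exploits the symmetry $\operatorname{div}_z\nabla_y w=\operatorname{div}_y\nabla_z w$ together with a second full-divergence expansion of $\operatorname{div}[(\nabla_y w)^\varepsilon]$; the two devices do the same job (both reduce the term to $\int P_\varepsilon u_\varepsilon$ times an admissible oscillating function with zero $Y$-mean, plus $O(\varepsilon)$ remainders controlled by Proposition~\ref{prop5}), your version being more systematic at the cost of one extra Schauder/regularity check, the paper's being shorter because it recycles $w$. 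Your stated $W^{2,p}$ regularity for the potential is also the accurate one, since the source $\rho\chi_{Z^*}\varphi$ is discontinuous across $\partial\Theta$.
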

\begin{proof}
 Let $\varphi \in \mathcal{D}(\Omega_{T})\otimes \mathcal{C}^\infty_{\text{per}}(Y)\otimes \mathcal{C}^\infty_{\text per}(Z)\otimes \mathcal{C}^\infty_{\text per}(\mathcal{T})$ with 
$
\int_{Z}\chi_{Z^*}\rho(z)\varphi(z)dz = 0,
$
we deduce from the
Fredholm alternative the existence of a unique $w \in
\mathcal{D}(\Omega_{T})\otimes \mathcal{C}^\infty_{\text per}(Y)\otimes H^1_{\#\rho}(Z^*)\otimes \mathcal{C}^\infty_{\text per}(\mathcal{T})$ such that 
\begin{equation}\label{eq3.1111}
\left\{
\begin{aligned}
&\Delta_z w =\varphi \rho\chi_{Z^*} \quad \text{ in } Z,\\&
w(x,t,y,\tau)\in H^1_{\#\rho}(Z^*)\ \ \ \ \text{for all}\ (x,t,y,\tau)\in\Omega_T\times Y \times \mathcal{T}.
\end{aligned}
 \right.
\end{equation}
But the restriction to $Z^*$ of the function $w$ defined by (\ref{eq3.1111}) lives in $\mathcal{C}^3(Z^*)$ so that we have  
\begin{equation*}
\text{div}(\nabla_z w)^\varepsilon = (\text{div}\nabla_z w)^\varepsilon + \frac{1}{\varepsilon}(\text{div}_y\nabla_z w)^\varepsilon+\frac{1}{\varepsilon^2}(\Delta_z w)^\varepsilon\qquad \text{in }\ \Omega^\varepsilon_T,
\end{equation*}
and therefore
\begin{eqnarray*}
 \frac{1}{\varepsilon^2 }
\int_{\Omega_{T}}u_{\varepsilon
}(x,t)\rho
\left(\frac{x}{\varepsilon^2 } \right)\chi_{Z^*}\left(\frac{x}{\varepsilon^2}\right)\varphi \left(x,t,\frac{x}{\varepsilon
},\frac{x}{\varepsilon^2 },\frac{t}{\varepsilon ^{2}}\right)dx\,dt &=&- \int_{\Omega_T}\nabla u_\varepsilon\cdot\chi_{Z^*}^\varepsilon(\nabla_z w)^\varepsilon dxdt \\- \int_{\Omega_T}u_\varepsilon\chi_{Z^*}^\varepsilon(\text{div} \nabla_z w)^\varepsilon dxdt-\frac{1}{\varepsilon}\int_{\Omega_T}u_\varepsilon\chi_{Z^*}^\varepsilon(\text{div}_y\nabla_z w)^\varepsilon dxdt. 
\end{eqnarray*}
As, $E'\ni \varepsilon\to 0$, (\ref{eqwms66}) and (iii) of Remark~\ref{r2} reveal that the first term in the right hand side of this equality converges to 
\begin{eqnarray*}
&& -\iiiint_{\Omega_T \times Y\times Z\times\mathcal{T} }\left(\nabla_x u_0 + \nabla_y u_1 + \nabla_z u_2\right)\cdot \chi_{Z^*}(\nabla_z w)dxdtdydzd\tau\\&&\qquad\qquad\qquad \quad = \iiiint_{\Omega_T \times Y\times Z\times\mathcal{T} } u_2\chi_{Z^*}(\Delta_z w)dxdtdydzd\tau,
 \end{eqnarray*}
while the second one converges to zero. As regards the third term, since the test function therein, $(\text{div}_y\nabla_z w)^\varepsilon$, depends on the $z$ variable, its limit cannot be computed as usual like in \cite[Theorem~2.3]{douanlaaa} even if its mean value over $Y$ is zero. This requires some further investigations. From
\begin{equation*}
\text{div}(\nabla_y w)^\varepsilon = (\text{div}\nabla_y w)^\varepsilon + \frac{1}{\varepsilon}(\text{div}_y\nabla_y w)^\varepsilon+\frac{1}{\varepsilon^2}(\text{div}_z\nabla_y w)^\varepsilon \qquad\text{in }\ \ \Omega^\varepsilon_T
\end{equation*}
and 
\begin{equation*}
(\text{div}_z\nabla_y w)^\varepsilon=(\text{div}_y\nabla_z w)^\varepsilon\qquad\text{in }\ \ \Omega^\varepsilon_T,
\end{equation*}
it follows that
\begin{eqnarray*}
&&-\frac{1}{\varepsilon}\int_{\Omega_T}u_\varepsilon\chi_{Z^*}^\varepsilon(\text{div}_y\nabla_z w)^\varepsilon dxdt =  \varepsilon\int_{\Omega_T}u_\varepsilon \chi_{Z^*}^\varepsilon(\text{div}\nabla_y w)^\varepsilon dxdt \\ && \qquad\qquad\qquad+ \int_{\Omega_T}u_\varepsilon \chi_{Z^*}^\varepsilon(\Delta_y w)^\varepsilon dxdt + \varepsilon\int_{\Omega_T}\nabla u_\varepsilon\cdot\chi_{Z^*}^\varepsilon(\nabla_y w)^\varepsilon dxdt.
\end{eqnarray*}

Therefore
$$-\frac{1}{\varepsilon}\int_{\Omega_T}u_\varepsilon\chi_{Z^*}^\varepsilon(\text{div}_y\nabla_z w)^\varepsilon dxdt \to \iiiint_{\Omega_T\times Y\times Z\times\mathcal{T}}u_0\chi_{Z^*}(\Delta_y w)\, dxdtdydzd\tau=0
$$
as $E'\ni\varepsilon\to 0$. The proof is completed.
\end{proof}

\begin{lemma}\label{key1} Let $E,\ E'$,   $(u_{\varepsilon})_{\varepsilon\in E}$ and the triplet
$(u_0,u_1,u_2)$ be as in Theorem~(\ref{theo6}). It holds that
\begin{eqnarray*}
&&\lim_{ \varepsilon \to 0} \frac{1}{\varepsilon }
\int_{\Omega_{T}}u_{\varepsilon
}(x,t)\rho
\left(\frac{x}{\varepsilon^2 } \right)\chi_{Z^*}\left(\frac{x}{\varepsilon^2}\right)\varphi \left(x,t,\frac{x}{\varepsilon
},\frac{x}{\varepsilon^2 },\frac{t}{\varepsilon ^{2}}\right)dx\,dt\\&&\qquad \qquad
=\iiiint_{\Omega_T\times Y \times Z \times \mathcal{T}}u_1(x, t,
y,z,
\tau)\rho(z)\chi_{Z^*}(z)\varphi(x,t,y,z,\tau)\,dx\,dt\,dy\,dz\,d\tau
\end{eqnarray*}
for all $\varphi \in \mathcal{D}(\Omega_{T})\otimes \mathcal{C}^\infty_{\text per}(Y)\otimes \mathcal{C}^\infty_{\text{per}}(Z)\otimes \mathcal{C}^\infty_{\text per}(\mathcal{T})$ such that
 $$ \int_{Y}\varphi(y)dy = 0\quad  \text{ for all }\  (x,t,z,\tau)\in\Omega_T\times Z\times\mathcal{T}.
 $$ 
\end{lemma}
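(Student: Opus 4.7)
The strategy is to mirror the proof of Lemma~\ref{key4}, with the roles of $y$ and $z$ swapped to match the $\varepsilon^{-1}$ prefactor and the $y$-mean-zero hypothesis. Since $\rho\chi_{Z^*}$ is $y$-independent, the compatibility $\int_Y\varphi\,dy=0$ implies $\int_Y\rho\chi_{Z^*}\varphi\,dy=0$, so the Fredholm alternative on the $y$-torus supplies a test function of the explicit form $w(x,t,y,z,\tau)=\rho(z)\chi_{Z^*}(z)\psi(x,t,y,z,\tau)$, where $\psi$ is smooth in all variables, has zero $y$-mean, and solves $\Delta_y\psi=\varphi$ in $Y$. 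By construction $\Delta_y w = \rho\chi_{Z^*}\varphi$, and $w$ vanishes identically on $Y\times\Theta$.

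Apply the pointwise chain-rule expansion in $\Omega^\varepsilon_T$,
\[
\operatorname{div}_x\!\bigl[(\nabla_y w)^\varepsilon\bigr] = (\operatorname{div}_x\nabla_y w)^\varepsilon + \tfrac{1}{\varepsilon}(\Delta_y w)^\varepsilon + \tfrac{1}{\varepsilon^2}(\operatorname{div}_z\nabla_y w)^\varepsilon,
\]
substitute $(\Delta_y w)^\varepsilon=\rho^\varepsilon\chi_{Z^*}^\varepsilon\varphi^\varepsilon$ to isolate $\tfrac{1}{\varepsilon}\rho^\varepsilon\chi_{Z^*}^\varepsilon\varphi^\varepsilon$, multiply by $u_\varepsilon$, and integrate over $\Omega_T$. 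An integration by parts in $x$ on the first term---with vanishing boundary contribution since $u_\varepsilon=P_\varepsilon u_\varepsilon\in L^2(0,T;H^1_0(\Omega))$---yields the three-term decomposition
\[
\tfrac{1}{\varepsilon}\!\int_{\Omega_T}\!u_\varepsilon\rho^\varepsilon\chi_{Z^*}^\varepsilon\varphi^\varepsilon\,dx\,dt = -\!\!\int_{\Omega_T}\!\!\nabla u_\varepsilon\!\cdot\!(\nabla_y w)^\varepsilon dx\,dt -\!\!\int_{\Omega_T}\!u_\varepsilon(\operatorname{div}_x\nabla_y w)^\varepsilon dx\,dt -\tfrac{1}{\varepsilon^2}\!\!\int_{\Omega_T}\!u_\varepsilon(\operatorname{div}_z\nabla_y w)^\varepsilon dx\,dt.
\]

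Pass to the limit along $E'\ni\varepsilon\to 0$ using Theorem~\ref{theo6}. In the first term the multi-scale convergence $\nabla u_\varepsilon\xrightarrow{w-ms}\nabla_x u_0+\nabla_y u_1+\nabla_z u_2$ combined with integration by parts in $y$ on the torus converts the $\nabla_y u_1$-piece into the target $\iiiint u_1\Delta_y w\,dx\,dt\,dy\,dz\,d\tau=\iiiint u_1\rho\chi_{Z^*}\varphi\,dx\,dt\,dy\,dz\,d\tau$, while the $\nabla_x u_0$-piece vanishes because $\int_Y\nabla_y w\,dy=0$. The second term converges to $-\iiiint u_0\operatorname{div}_x\nabla_y w=0$ by the same $y$-periodicity argument.

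The main obstacle is the third term, whose prefactor $\varepsilon^{-2}$ exceeds that of the left-hand side; the verbatim analogue of Lemma~\ref{key4}'s ``second integration by parts'' (rewriting $\operatorname{div}_z\nabla_y w=\operatorname{div}_y\nabla_z w$ and substituting the chain rule for $(\nabla_z w)^\varepsilon$) introduces an unwanted $\varepsilon^{-3}$ contribution rather than the harmless $\varepsilon^0$ remainder seen there, so a more delicate mechanism is required. The decisive structural input is that $w\equiv 0$ on $\Theta$, which confines the bulk part of $(\operatorname{div}_z\nabla_y w)^\varepsilon$ to $\Omega^\varepsilon_T$ and concentrates the remainder on the hole boundaries $\partial\Theta^\varepsilon$ as surface distributions; combined with the hitherto unresolved $\nabla_z u_2$ contribution from the first term (which, upon $y$-IBP, generates analogous surface integrals over $\partial Z^*$), these contributions must cancel exactly so that only $\iiiint u_1\rho\chi_{Z^*}\varphi$ survives. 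Establishing this cancellation rigorously is the technical crux of the proof.
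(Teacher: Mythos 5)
There is a genuine gap, and it is exactly where you flag it: the term $\tfrac{1}{\varepsilon^2}\int_{\Omega_T}u_\varepsilon\,(\operatorname{div}_z\nabla_y w)^\varepsilon\,dx\,dt$ is never actually computed. Your proposed mechanism --- an exact cancellation between surface distributions on $\partial\Theta^\varepsilon$ and boundary integrals generated by the $\nabla_z u_2$ piece --- is left unproven (``the technical crux''), and it is also an artifact of your choice of corrector: by baking the discontinuous factor $\chi_{Z^*}(z)$ into $w$, you make $w$ non-smooth in $z$, so $(\operatorname{div}_z\nabla_y w)^\varepsilon$ is a measure supported on the hole boundaries and the whole chain-rule decomposition becomes distributional. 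The paper avoids this entirely: it solves $\Delta_y w=\varphi\rho$ on the $y$-torus (legitimate since $\rho$ is $y$-independent, so $\int_Y\varphi\rho\,dy=0$), keeps $w$ at least $\mathcal{C}^1$ in $z$, and carries $\chi_{Z^*}^\varepsilon$ as an external multiplier in every integral.

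You are right that the \emph{verbatim} second-integration-by-parts trick from Lemma~\ref{key4} would produce an $\varepsilon^{-3}$ term here, but that is not what the paper does with the $\varepsilon^{-2}$ term. Since $w$ is $Z$-periodic in $z$, one has $\int_Z\operatorname{div}_z(\nabla_y w)\,dz=0$, so the function $g:=\operatorname{div}_z\nabla_y w$ satisfies precisely the zero-$z$-mean compatibility condition under which the \emph{entire} argument of Lemma~\ref{key4} applies at the $\varepsilon^{-2}$ scale: one solves a fresh cell problem $\Delta_z v=g$ by the Fredholm alternative and repeats the three-term decomposition to get
$\tfrac{1}{\varepsilon^2}\int_{\Omega_T}u_\varepsilon\chi_{Z^*}^\varepsilon g^\varepsilon\,dx\,dt\to\iiiint_{\Omega_T\times Y\times Z\times\mathcal{T}}u_2\,\chi_{Z^*}\,\operatorname{div}_z\nabla_y w$.
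This limit then cancels against the $\nabla_z u_2$ contribution coming from the first term after an integration by parts in $z$, and what survives is exactly $\iiiint u_1\chi_{Z^*}\Delta_y w=\iiiint u_1\rho\chi_{Z^*}\varphi$. In short: the missing ingredient is a reapplication of Lemma~\ref{key4} itself (as a black box for $\varepsilon^{-2}$-scale limits of zero-$z$-mean test functions), not a boundary-layer cancellation.
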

\begin{proof}
 Let $\varphi \in \mathcal{D}(\Omega_{T})\otimes \mathcal{C}^\infty_{\text per}(Y)\otimes \mathcal{C}^\infty_{\text{per}}(Z)\otimes \mathcal{C}^\infty_{\text per}(\mathcal{T})$ with $\int_{Y}\varphi(y)dy = 0$ and consider $w\in \mathcal{D}(\Omega_{T})\otimes \mathcal{C}^\infty_{\text per}(Y)\otimes \mathcal{C}^1_{\text{per}}(Z)\otimes \mathcal{C}^\infty_{\text per}(\mathcal{T})$ such
that
\begin{equation*}
\left\{
\begin{aligned}
&\Delta_y w = \varphi \rho\quad \text{ in } Y,\\&
w(x,t,z,\tau)\in \mathcal{C}^\infty_{\#}(Y)\ \ \ \text{for all }\ (x,t,z,\tau)\in \Omega_T\times Z\times\mathcal{T}.
\end{aligned}
 \right.
\end{equation*}
Recalling that 
 $$
\text{div}(\nabla_y w)^\varepsilon = (\text{div}\nabla_y w)^\varepsilon + \frac{1}{\varepsilon}(\text{div}_y\nabla_y w)^\varepsilon+\frac{1}{\varepsilon^2}(\text{div}_z\nabla_y w)^\varepsilon \quad \text{in }\ \Omega_T,
$$
the following holds true 
\begin{eqnarray}& &
 \frac{1}{\varepsilon }
\int_{\Omega_{T}}u_{\varepsilon
}(x,t)\rho
\left(\frac{x}{\varepsilon^2 } \right)\chi_{Z^*}\left(\frac{x}{\varepsilon^2}\right)\varphi \left(x,t,\frac{x}{\varepsilon
},\frac{x}{\varepsilon^2 },\frac{t}{\varepsilon ^{2}}\right)dxdt\nonumber\\&&\quad\quad =\frac{1}{\varepsilon }
\int_{\Omega_{T}}u_{\varepsilon
}(x,t)\chi_{Z^*}\left(\frac{x}{\varepsilon^2}\right)(\Delta_y w)\left( x,t,\frac{x}{\varepsilon},\frac{x}{\varepsilon^2},\frac{t}{\varepsilon^2} \right) dxdt\label{eq3.111} \\ & & \quad\quad = -\int_{\Omega_T} \nabla u_\varepsilon\cdot \chi_{Z^*}^\varepsilon (\nabla_y w)^\varepsilon -
\int_{\Omega_T}u_\varepsilon\chi_{Z^*}^\varepsilon (\text{div}\nabla_y w)^\varepsilon dxdt- \frac{1}{\varepsilon^2}\int_{\Omega_T}u_\varepsilon\chi_{Z^*}^\varepsilon (\text{div}_z\nabla_y w)^\varepsilon dxdt.\nonumber
\end{eqnarray}
As $\int_{Z}\text{div}_z(\nabla_y w)dz=0$  we may follow the lines of reasoning in the proof of Lemma~\ref{key4} to compute the limit of the last term in (\ref{eq3.111}). We find that as $E'\ni\varepsilon\to 0$ the right hand side of (\ref{eq3.111}) converges to  
\begin{eqnarray*}
&&\iiiint_{\Omega_T \times Y\times Z\times\mathcal{T} }\chi_{Z^*}\Big[-\left(\nabla_x u_0 + \nabla_y u_1 + \nabla_z u_2\right)\cdot (\nabla_y w) -u_0 (\text{div}\nabla_y w)-u_2(\text{div}_z \nabla_y w)\Big]dxdtdydzd\tau\nonumber \\&&\qquad=\iiiint_{\Omega_T \times Y\times Z\times\mathcal{T} }u_1\chi_{Z^*}(\Delta_y w)\,dxdtdydzd\tau =\iiiint_{\Omega_T \times Y\times Z\times\mathcal{T} }u_1\chi_{Z^*}\varphi\rho  \, dxdtdydzd\tau,
\end{eqnarray*}
and the proof is completed.
\end{proof}

\begin{proof}[\textbf{Proof of Theorem~\ref{theo6}}]
According to Proposition~\ref{prop5}, Theorem~\ref{theo1} and  Theorem~\ref{theo5}, it remains to prove (\ref{eqwms555}) and to justify that the function $u_1$ in the triplet given by Theorem~\ref{theo5} actually lives in $L^2(\Omega_T; H^1_{\#}(Y))$, i.e., $u_1$ does not depend on the variable $\tau$. We start with the fact that $u_1\in L^2(\Omega_T; H^1_{\#}(Y))$. To prove this, let $\psi \in
\mathcal{D}(\Omega_T)\otimes\mathcal{C}^\infty_{\# }(Y)\otimes
\mathcal{C}^\infty_{\text per}(\mathcal{T})$ and consider the function $\psi^\varepsilon \in \mathcal{D}(\Omega_T)$ defined by
$$
\psi^\varepsilon
(x,t) = \varepsilon^3\psi
\left(x,t,\frac{x}{\varepsilon},\frac{t}{\varepsilon^2} \right),\qquad  \ \
\ (x,t)\in\Omega_T.
$$
Using $\psi^\varepsilon $ as a test function in problem (\ref{eq1}), we obtain
\begin{eqnarray*}
&&\int_{\Omega_T}\rho^\varepsilon
\chi_{Z^*}^\varepsilon(P_\varepsilon u_{\varepsilon})\frac{\partial^2\psi^\varepsilon}{\partial
t^2}dxdt +
\int_{\Omega_T}\beta^\varepsilon\chi_{Z^*}^\varepsilon\frac{\partial
(P_\varepsilon u_{\varepsilon})}{\partial t}\psi^\varepsilon dxdt \\ &&\qquad \qquad+
\int_{\Omega_T} A^\varepsilon\nabla(P_\varepsilon u_\varepsilon)\cdot\chi_{Z^*}^\varepsilon\nabla\psi^\varepsilon dxdt = \int_{\Omega_T}
f\chi_{Z^*}^\varepsilon\psi^\varepsilon dxdt.
\end{eqnarray*}
Letting $E'\ni\varepsilon \to 0$ in this equation, the term in the right hand side and the second and third terms on the left hand side obviously converge to zero, so that

\begin{equation}\label{ld22}
\lim_{E'\ni \varepsilon\to 0}\int_{\Omega_T}\rho^\varepsilon
\chi_{\Omega^\varepsilon}(P_\varepsilon u_{\varepsilon})\frac{\partial^2\psi^\varepsilon}{\partial
t^2}dxdt = 0.
\end{equation}
But
\begin{equation}\label{d22}
\frac{\partial^2\psi^\varepsilon}{\partial t^2} =
\varepsilon^3\frac{\partial^2\psi}{\partial t^2}
+2\varepsilon\frac{\partial^2\psi}{\partial t \partial \tau} +
\frac{1}{\varepsilon}\frac{\partial^2\psi}{\partial\tau^2}.
\end{equation}
Plugging (\ref{d22}) into (\ref{ld22}) we realise that 
$$
\lim_{E'\ni\varepsilon \to
0}\frac{1}{\varepsilon}\int_{\Omega_T}(P_\varepsilon u_{\varepsilon})(x,t)\rho\left(\frac{x}{\varepsilon^2}\right)
\chi_{Z^*}\left(\frac{x}{\varepsilon^2}\right)\frac{\partial^2\psi}{\partial
\tau^2}\left(x,t,\frac{x}{\varepsilon},\frac{t}{\varepsilon^2} \right) dxdt = 0.
 $$
 Using Lemma~\ref{key1}, this is equivalent to
 $$
\int\int\int\int_{\Omega_T \times Y \times Z \times
\mathcal{T}}u_1(x,t,
y,\tau)\rho(z)\chi_{Z^*}(z)\frac{\partial^2\psi}{\partial\tau^2}(x,t, y,
\tau)dxdtdydz d\tau =0,
 $$
 which, by taking $\psi = \psi_1\otimes\psi_2\otimes\psi_3$ with
 $\psi_1 \in \mathcal{D}(\Omega_T)\ \ \ \psi_2\in
 \mathcal{C}^\infty_{\#}(Y)$ and $\psi_3 \in \mathcal{C_{\text per}^\infty}(\mathcal{T})$, also writes 
$$
\int_{Z^*}\rho(z)dz\iint_{\Omega_T \times
Y}\psi_1(x,t)\psi_2(y)\left(\int_{\mathcal{T}}u_1(x,t,
y,\tau)\frac{\partial^2\psi_3}{\partial\tau^2}(\tau)d\tau\right)dxdtdy=0.
$$
The hypothesis $\mathcal{M}_{Z^*}(\rho)>0$ and the  arbitrariness of $\psi_1$ and $\psi_2$ yields
$$
\int_0^1 u_1(x,t, y,\tau)\frac{\partial^2 \psi_3}{\partial\tau^2}(\tau)d\tau = 0 \quad \ \text{ for all}\ \ \  \psi_3 \in
\mathcal{C}^\infty_{\text per}(\mathcal{T}).
$$
Taking in particular $\psi_3(\tau)= e^{-2i\pi p \tau}  \ (p \in
\mathbb{Z}\setminus \{0\})$, we obtain
\begin{equation}\label{eq333}
\int_0^1u_1(x,t,y,\tau)e^{-2i\pi p \tau}d\tau =0
\ \ \ \ \  \text{ for all }\  p \in \mathbb{Z}\setminus \{0\}.
\end{equation}
The Fourier series expansion of the periodic function $\tau \mapsto u_1(x, t, y, \tau)$ writes
$$
u_1(x, t, y, \tau) = \sum_{p \in \mathbb{Z}}C_pe^{2i\pi p \tau} \ \
\ \ \ \text{where} \ C_p=\int_0^1u_1(x,t,y,\tau)e^{-2i\pi p
\tau}d\tau.
$$
But (\ref{eq333}) implies that $C_p =0 $ for all $ p \in \mathbb{Z}\setminus \{0\} $, so that
 $ u_1(x,t,y,\tau) = C_0 = \int_0^1 u_1(x,t,y,\tau)d\tau$. This proves that the function $u_1$ is independent of $\tau$.

We now prove (\ref{eqwms555}). It follows from (\ref{eq2.133}) and Theorem~\ref{theo3} that there exists $w\in L^2(\Omega_T\times Y\times Z\times \mathcal{T})$ such that, as $E'\ni\varepsilon\to 0$,
\begin{equation}\label{eq3.122}
\frac{\partial (P_\varepsilon u_\varepsilon)}{\partial t}\xrightarrow{w-ms} w\qquad \text{in} \ \ \ L^2(\Omega_T).
\end{equation}
Since (\ref{eqwms55}) implies that $\frac{\partial (P_\varepsilon u_\varepsilon)}{\partial t}\to \frac{\partial u_0}{\partial t}$ weakly in $\mathcal{D}'(\Omega_T)$ as $E'\ni\varepsilon\to 0$, while (\ref{eq3.122}) implies $\frac{\partial (P_\varepsilon u_\varepsilon)}{\partial t}\xrightarrow{w-ms} \mathcal{M}_{Y\times Z\times\mathcal{T}}(w)$ weakly in $\mathcal{D}'(\Omega_T)$ as $E'\ni\varepsilon\to 0$ it is sufficient to prove that the function $w$ does not depend on the variables $y, z$ and $\tau$ to conclude that $w=\frac{\partial u_0}{\partial t}$ in $L^ 2(\Omega_T)$. Firstly, we prove that the function $w$ does not depend on the variable $z$. Let $\theta\in\mathcal{D}(\Omega_T)$, $\varphi\in\mathcal{C}^\infty_{\text per}(Y)$, $\psi\in \mathcal{C}^\infty_{\text per}(Z)$ and $\vartheta\in \mathcal{C}^\infty_{\text per}(\mathcal{T})$ and define $w^\varepsilon(x,t)=\theta(x,t)\varphi(\frac{x}{\varepsilon})\psi(\frac{x}{\varepsilon^2})\vartheta(\frac{t}{\varepsilon^2})$ for $\varepsilon\in E'$ and $(x,t)\in \Omega_T$. Passing to the limit as $E'\ni\varepsilon\to 0$ in the following equality 

\begin{eqnarray*}
 &&-\varepsilon^2\left\langle \frac{\partial}{\partial x_j}\left(\frac{\partial (P_\varepsilon u_\varepsilon)}{\partial t}\right), w^\varepsilon\right\rangle_{L^2(0,T;H^{-1}(\Omega)),  L^2(0,T;H^1_0(\Omega))}\\ \text{ }\\ && \qquad\qquad=\int_{\Omega_T}\vartheta\left(\frac{t}{\varepsilon^2}\right)\frac{\partial (P_\varepsilon u_\varepsilon)}{\partial t}\left[\varepsilon^2\frac{\partial \theta}{\partial x_j}\varphi^\varepsilon\psi^\varepsilon + \theta\varphi^\varepsilon\left(\frac{\partial \psi}{\partial z_j}\right)^\varepsilon + \varepsilon\theta\left(\frac{\partial \varphi}{\partial y_j}\right)^\varepsilon \psi^\varepsilon\right]dxdt,
\end{eqnarray*}
we obtain (keep in mind that (\ref{eq2.133}) implies the boundedness in $L^2(0,T;H^{-1}(\Omega))$ of the first term in the duality bracket just above)
$$
0=\int_{\Omega_T\times Y\times Z\times \mathcal{T}}w(x,t,y,z,\tau)\theta(x,t)\varphi(y)\frac{\partial \psi}{\partial z_j}(z)\vartheta(\tau)dxdtdydzd\tau
$$
which by the arbitrariness of $\theta, \varphi$ and $\vartheta$ implies that
$$
\int_Z w(x,t,y,z,\tau)\frac{\partial \psi}{\partial z_j}(z)dz=0 \qquad \text{ for all }\  (x,t,y,\tau)\in\Omega_T\times Y\times\mathcal{T},
$$
which proves that $w$ does not depend on the variable $z$. Similarly, one easily proves that $w$ does not depend on $y$ by passing to the limit in the following equality (where $w^\varepsilon(x,t)=\theta(x,t)\varphi(\frac{x}{\varepsilon})\vartheta(\frac{t}{\varepsilon^2})$ for $\varepsilon\in E'$ and $(x,t)\in \Omega_T$)
$$
-\varepsilon\left\langle \frac{\partial}{\partial x_j}\left(\frac{\partial (P_\varepsilon u_\varepsilon)}{\partial t}\right), w^\varepsilon\right\rangle_{L^2(0,T;H^{-1}(\Omega)),  L^2(0,T;H^1_0(\Omega))}=\int_{\Omega_T}\vartheta \left(\frac{t}{\varepsilon^2}\right)\frac{\partial (P_\varepsilon u_\varepsilon)}{\partial t}\left[\varepsilon\frac{\partial \theta}{\partial x_j}\varphi^\varepsilon +\theta\left(\frac{\partial \varphi}{\partial y_j}\right)^\varepsilon \right]dxdt.
$$

As for the independence of $u_1$ from the variable $\tau$, we have the following equality, where $w^\varepsilon(x,t)=\theta(x,t)\vartheta(\frac{t}{\varepsilon^2})$ for $\varepsilon\in E'$ and $(x,t)\in \Omega_T$
$$
-\varepsilon^2\left\langle \rho^\varepsilon\frac{\partial^2  u_\varepsilon}{\partial t^2}, w^\varepsilon\right\rangle_{L^2(0,T;V'_\varepsilon),\, L^2(0,T;V_\varepsilon)}=\int_{\Omega_T}\rho\left(\frac{x}{\varepsilon^2}\right)\chi_{Z^*}\left(\frac{x}{\varepsilon^2}\right)\frac{\partial (P_\varepsilon u_\varepsilon)}{\partial t}\left[\varepsilon^2\frac{\partial \theta}{\partial t}\vartheta^\varepsilon  +\theta\left(\frac{\partial \vartheta}{\partial \tau}\right)^\varepsilon \right]dxdt,
$$
which, after a limit passage as $E'\ni\varepsilon\to 0$ (keeping (\ref{est4}) in mind) leads to
$$0= \mathcal{M}_{Z^*}(\rho)\int_0^1w(x,t,\tau)\frac{\partial \vartheta}{\partial\tau}d\tau \qquad \text{ for all }\  (x,t)\in\Omega_T.$$
But $\mathcal{M}_{Z^*}(\rho)>0$ and the proof is completed.
\end{proof}

\begin{remark}\label{r3.3}
In order to capture all the microscopic and mesoscopic behaviours of the phenomenon modelled by problem (\ref{eq1}), one must take  test functions of the form
\begin{equation*}
\psi_\varepsilon(x,t)=\psi_0(x,t)+\varepsilon\psi_1\left(x,t,\frac{x}{\varepsilon},\frac{t}{\varepsilon^2}\right)+\varepsilon^2\psi_2\left(x,t,\frac{x}{\varepsilon},
\frac{x}{\varepsilon^2},\frac{t}{\varepsilon^2}\right),
\end{equation*}
with  $\psi_0\in\mathcal{D}(\Omega_T)$, $\psi_1\in\mathcal{D}(\Omega_T)\otimes\mathcal{C}^\infty_{\#}(Y)\otimes\mathcal{C}^\infty_{\text per}(\mathcal{T})$ and $\psi_2\in \mathcal{D}(\Omega_T)\otimes\mathcal{C}^\infty_{per}(Y)\otimes \mathcal{C}^\infty_{\#\rho}(Z^*)\otimes\mathcal{C}^\infty_{\text per}(\mathcal{T})$. The  Theorem~\ref{theo6} informs us that the function $u_1$ does not depend on the variable $\tau$ so that in the homogenization process of problem~(\ref{eq1}) we can instead use test functions of the form
\begin{equation}\label{eq3.99}
\psi_\varepsilon(x,t)=\psi_0(x,t)+\varepsilon\psi_1\left(x,t,\frac{x}{\varepsilon}\right)+\varepsilon^2\psi_2\left(x,t,\frac{x}{\varepsilon},
\frac{x}{\varepsilon^2},\frac{t}{\varepsilon^2}\right),
\end{equation}
where $\psi_1\in\mathcal{D}(\Omega_T)\otimes\mathcal{C}^\infty_{\#}(Y)$ and $\psi_0, \psi_2$ are as above.
\end{remark}

\section{The homogenization process}

In this section, we pass to the limit  in the limit in the variational formulation of problem (\ref{eq1}) and formulate the microscopic problem, the mesoscopic problem and the macroscopic problem, successively.

\subsection{The global limit problem for (\ref{eq1})} The setting being that of Theorem~\ref{theo6}, let $\psi_0\in\mathcal{D}(\Omega_T)$, $\psi_1\in\mathcal{D}(\Omega_T)\otimes\mathcal{C}^\infty_{\#}(Y)$ and $\psi_2\in \mathcal{D}(\Omega_T)\otimes\mathcal{C}^\infty_{per}(Y)\otimes \mathcal{C}^\infty_{\#\rho}(Z^*)\otimes\mathcal{C}^\infty_{\text per}(\mathcal{T})$, and consider for any $\varepsilon\in E$, the function $\psi_\varepsilon\in\mathcal{D}(\Omega_T)$ defined as in (\ref{eq3.99}). We aim at passing to the limit (as $E'\ni\varepsilon\to 0$) in the following equality.
\begin{equation}\label{vareq}
\begin{aligned}
&\int_{\Omega_T}\rho\left(
\frac{x}{\varepsilon^2}\right)\chi_{Z^*}\left(
\frac{x}{\varepsilon^2}\right)(P_\varepsilon u_\varepsilon)\frac{\partial^2\psi_\varepsilon}{\partial
t^2}dx\,dt
 +\int_{\Omega_T}\beta\left( \frac{x}{\varepsilon},
\frac{t}{\varepsilon^2} \right)\chi_{Z^*}\left(
\frac{x}{\varepsilon^2}\right)\frac{\partial (P_\varepsilon u_\varepsilon)}{\partial
t}\psi_{\varepsilon}(x,t)\,dx\,dt\\
&+ \int_{\Omega_T}\chi_{Z^*}\left(
\frac{x}{\varepsilon^2}\right)A\left(\frac{x}{\varepsilon},\frac{x}{\varepsilon^2}\right)\nabla
\left(P_\varepsilon u_\varepsilon \right)\cdot\nabla\psi_\varepsilon\,dx\,dt=\int_{\Omega_T}f(x,t)\psi_\varepsilon (x,t)\chi_{Z^*}\left(
\frac{x}{\varepsilon^2}\right)dxdt.
\end{aligned}
\end{equation}
We will consider each term of (\ref{vareq}) separately. We
start with the first term in the left hand side and denote it by $L_1$. Recalling that
$$
\frac{\partial^2\psi_\varepsilon }{\partial t^2}=
\frac{\partial^2\psi_0}{\partial t^2} + \varepsilon
\left(\frac{\partial^2\psi_1 }{\partial t^2}\right)^\varepsilon +
\varepsilon^2\left(\frac{\partial^2\psi_2}{\partial t^2}\right)^\varepsilon+ 2
\left(\frac{\partial^2\psi_2}{\partial t \partial \tau}\right)^\varepsilon +
\frac{1}{\varepsilon^2} \left(\frac{\partial^2\psi_2}{\partial\tau^2}\right)^\varepsilon\qquad\text{in}\ \ \ \Omega_T,
$$
we have
\begin{align}
 L_1 & = \int_{\Omega_T}\rho^\varepsilon\chi_{Z^*}^\varepsilon(P_\varepsilon u_\varepsilon)\frac{\partial^2\psi_0}{\partial
t^2}dx\,dt +  \varepsilon \int_{\Omega_T}\rho^\varepsilon\chi_{Z^*}^\varepsilon(P_\varepsilon u_\varepsilon)\left(\frac{\partial^2\psi_1 }{\partial t^2}\right)^\varepsilon dx\,dt\nonumber \\
&\qquad  + \varepsilon^2 \int_{\Omega_T}\rho^\varepsilon\chi_{Z^*}^\varepsilon(P_\varepsilon u_\varepsilon) \left(\frac{\partial^2\psi_2}{\partial t^2}\right)^\varepsilon  dxdt +2 \int_{\Omega_T}\rho^\varepsilon\chi_{Z^*}^\varepsilon(P_\varepsilon u_\varepsilon) \left(\frac{\partial^2\psi_2}{\partial t \partial \tau}\right)^\varepsilon dxdt  \label{eq4.2}\\& \qquad  +  \frac{1}{\varepsilon^2}
\int_{\Omega_T}\rho^\varepsilon\chi_{Z^*}^\varepsilon(P_\varepsilon u_\varepsilon)(x,t) \left(\frac{\partial^2\psi_2}{\partial\tau^2}\right)^\varepsilon dxdt.\nonumber
\end{align}
As $\varepsilon\to 0$, (\ref{eqwms55}) and (iii) of Remark~\ref{r2}, imply that the second and third terms in the right hand side of (\ref{eq4.2})  converge to zero and the fourth term converges to
$$2\iiint_{\Omega_T\times Y\times Z}
\rho(z)\chi_{Z^*}(z)u_0(x,t)\left(\int_0^1\frac{\partial}{\partial\tau}\left(
\frac{\partial\psi_2}{\partial
t}\right)d\tau\right)dx\,dt\,dy\,dz = 0
$$
while, using the weak-strong convergence theorem in $L^2(\Omega_T)$, we realise that the first term converges to
\begin{equation*}
\left(\int_{Z^*}\rho(z)dz\right) \left(\int_{\Omega_T}
\frac{\partial^2u_0}{\partial t^2}\psi_0(x,t)dx\,dt\right).
\end{equation*}
As for the last term, Lemma~\ref{key4} applies and yields the following limit
\begin{equation*}
\int_{\Omega_T}\left( \int_0^1\!\!\!\int_{Y} \left[
u_2 , \rho\chi_{Z^*}\frac{\partial^2\psi_2}{\partial\tau^2}
\right]dyd\tau \right)dx\,dt.
\end{equation*}
Hence, as $\varepsilon\to 0$, $L_1$ converges to
\begin{equation}\label{eq4.222}
\left(\int_{Z^*}\rho(z)dz\right) \left(\int_{\Omega_T}
\frac{\partial^2u_0}{\partial t^2}\psi_0(x,t)dx\,dt\right) + \int_{\Omega_T}\left( \iint_{Y\times \mathcal{T}} \left[
u_2\rho\chi_{Z^*} , \frac{\partial^2\psi_2}{\partial\tau^2}
\right]d\tau dy\right)dx\,dt.
\end{equation}

Considering now the second term in the left hand side of (\ref{vareq}) which we denote by $L_2$, we have
\begin{align}
L_2 & =\int_{\Omega_T}\beta^\varepsilon \chi_{Z^*}^\varepsilon\frac{\partial (P_\varepsilon
 u_\varepsilon)}{\partial t}\psi_{0}(x,t)dxdt + \varepsilon \int_{\Omega_T}\beta^\varepsilon\chi_{Z^*}^\varepsilon\frac{\partial (P_\varepsilon
 u_\varepsilon)}{\partial t}\psi_{1}\left(x,t,\frac{x}{\varepsilon} \right)dxdt \nonumber \\
  &\qquad +\varepsilon^2\int_{\Omega_T}\beta^\varepsilon\chi_{Z^*}^\varepsilon
\frac{\partial (P_\varepsilon
 u_\varepsilon)}{\partial t}\psi_{2}\left(x,t,\frac{x}{\varepsilon},\frac{x}{\varepsilon^2},\frac{t}{\varepsilon^2}\right)dxdt.\label{eq4.3}
\end{align}
The second and third terms in (\ref{eq4.3}) tend to zero as  $\varepsilon \to 0$. As the first term is concerned, (\ref{eqwms555}) and item (iii) of Remark~\ref{r2} help to get  the following limit
\begin{eqnarray*}
&&\iiiint_{\Omega_T\times Y\times Z\times \mathcal{T}}\beta(y,\tau)\chi_{Z^*}(z)\frac{\partial
u_0}{\partial t}\psi_0dxdtdydzd\tau\\&&\qquad\qquad
=|Z^*|\left(\int_0^1\!\!\!\int_ {Y}\beta(y,\tau)dy\,d\tau\right)\left(
\int_{\Omega_T}\frac{\partial u_0}{\partial
t}\psi_0dxdt\right).
\end{eqnarray*}
Finally, it is classical that the third term in the left hand side of (\ref{vareq}) converges to
\begin{equation}\label{eq4.4}
\iiiint_{\Omega_T\times Y\times Z \times
\mathcal{T}}A(y,z)\left(\nabla_x u_0 + \nabla_y
u_1 + \nabla_z u_2\right) \cdot\chi_{Z^*}(z)\left(\nabla_x \psi_0
  + \nabla_y \psi_1 + \nabla_z
\psi_2\right)\,dxdtdydzd\tau,
\end{equation}
while its right hand side tends to
\begin{equation}\label{eq4.5}
|Z^*| \int_{\Omega_T}f(x,t)\psi_0(x,t)dx\,dt.
\end{equation}
To formulate our global limit problem, we need to prove that $u_2(x,t)\in V$ for almost all $(x,t)\in\Omega_T$, such that we can rewrite the duality bracket in (\ref{eq4.222}) using the formula in Proposition~\ref{prop10}, viz.,
$$
\int_0^1\!\!\!\int_{Y}\left[ u_2 ,\rho\chi_{Z^*}
\frac{\partial^2\psi_2}{\partial\tau^2} \right]dy\,d\tau =
\int_0^1\!\!\!\int_{Y}\left[\rho
\chi_{Z^*}\frac{\partial^2 u_2}{\partial\tau^2}, \psi_2
\right]dy\,d\tau\quad  \text{a.e. \ in }\Omega_T.
$$
\begin{proposition}
The function $u_2 \in L^2(\Omega_T; L_{\text per}^{2}(Y\times \mathcal{T};H_{\#\rho}^{1}(Z^*)))$ defined by Theorem~\ref{theo6} satisfies 
$$
u_2(x,t)\in V \text{ for almost all } (x,t)\in\Omega_T.
$$
\end{proposition}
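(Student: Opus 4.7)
The plan is to establish a weak equation satisfied by $u_2$ at a.e.~$(x,t)\in\Omega_T$ whose right-hand side is controlled by the $L^2(Y\times\mathcal{T};H^1_{\#\rho}(Z^*))$-norm of the test function, and then invoke Proposition~\ref{prop8}. To this end, in the variational identity~(\ref{vareq}) I would substitute
\begin{equation*}
\psi_\varepsilon(x,t)=\varepsilon^2\theta(x,t)\,\varphi\!\left(\tfrac{x}{\varepsilon},\tfrac{x}{\varepsilon^2},\tfrac{t}{\varepsilon^2}\right),
\end{equation*}
with $\theta\in\mathcal{D}(\Omega_T)$ and $\varphi\in\mathcal{C}^\infty_{\text{per}}(Y)\otimes\mathcal{C}^\infty_{\#\rho}(Z^*)\otimes\mathcal{C}^\infty_{\text{per}}(\mathcal{T})$, i.e.~the special case of~(\ref{eq3.99}) with $\psi_0=\psi_1=0$ and $\psi_2=\theta\otimes\varphi$. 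The prefactor $\varepsilon^2$ is tuned so that the $\varepsilon^{-2}$-contribution $\theta(\partial_\tau^2\varphi)^\varepsilon$ in $\partial_t^2\psi_\varepsilon$ and the $\nabla_z$-contribution in $\nabla\psi_\varepsilon$ are the only ones of order one.

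Passing to the limit as $E'\ni\varepsilon\to 0$ is then routine scale-counting: the $\beta$-integral carries an overall $\varepsilon^2$ and vanishes using~(\ref{eq2.133}); the source term and the $\nabla_x$- and $\nabla_y$-contributions in $L_3$ also pick up at least one factor of $\varepsilon$ and vanish by Proposition~\ref{prop2}; the subleading $L_1$-contribution $2\rho^\varepsilon\chi_{Z^*}^\varepsilon(P_\varepsilon u_\varepsilon)\partial_t\theta(\partial_\tau\varphi)^\varepsilon$ passes to a multi-scale limit that vanishes because $\int_0^1\partial_\tau\varphi\,d\tau=0$. The $\varepsilon^{-2}$-term in $L_1$ is handled by Lemma~\ref{key4}, whose hypothesis $\int_{Z^*}\rho(z)\partial_\tau^2\varphi\,dz=\partial_\tau^2\int_{Z^*}\rho\,\varphi\,dz=0$ holds since $\varphi(\cdot,\cdot,\tau)\in\mathcal{C}^\infty_{\#\rho}(Z^*)$, while the $\nabla_z$-contribution in $L_3$ converges by~(\ref{eqwms66}). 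After localizing in $\theta\in\mathcal{D}(\Omega_T)$ one obtains, for a.e.~$(x,t)\in\Omega_T$,
\begin{equation*}
\int_0^1\!\!\!\int_Y\!\!\left[u_2,\rho\chi_{Z^*}\frac{\partial^2\varphi}{\partial\tau^2}\right]\!dy\,d\tau = -\!\iiint_{Y\times Z^*\times\mathcal{T}}\!\!\!A(y,z)\bigl(\nabla_xu_0+\nabla_yu_1+\nabla_zu_2\bigr)\cdot\nabla_z\varphi\,dy\,dz\,d\tau.
\end{equation*}

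By the Cauchy--Schwarz inequality together with $\|A\|_\infty\leq\Lambda$, the right-hand side is bounded by $C(x,t)\,\|\varphi\|_{L^2(Y\times\mathcal{T};H^1_{\#\rho}(Z^*))}$, where $C(x,t)=\Lambda\|\nabla_xu_0(x,t)+\nabla_yu_1(x,t,\cdot)+\nabla_zu_2(x,t,\cdot,\cdot,\cdot)\|_{L^2(Y\times Z^*\times\mathcal{T})^N}$ is finite a.e.~$(x,t)$. Since $\mathcal{C}^\infty_{\text{per}}(Y)\otimes\mathcal{C}^\infty_{\text{per}}(\mathcal{T})\otimes\mathcal{C}^\infty_{\#\rho}(Z^*)$ is dense in $L^2(Y\times\mathcal{T};H^1_{\#\rho}(Z^*))$ (as recalled just before Proposition~\ref{prop8}), the periodic distribution $\rho\chi_{Z^*}\partial_\tau^2u_2(x,t,\cdot,\cdot,\cdot)$ is continuous on that dense subspace with the required norm, so Proposition~\ref{prop8} identifies it as an element of $L^2(Y\times\mathcal{T};(H^1_{\#\rho}(Z^*))')$. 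Combined with $u_2(x,t,\cdot,\cdot,\cdot)\in L^2(Y\times\mathcal{T};H^1_{\#\rho}(Z^*))$, this yields $u_2(x,t)\in V$ for a.e.~$(x,t)\in\Omega_T$.

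The main obstacle is the careful scale-by-scale accounting in the limit passage, in particular distinguishing which subleading contributions to $\partial_t^2\psi_\varepsilon$ and $\nabla\psi_\varepsilon$ are killed merely by their $\varepsilon$-prefactor against the uniform bounds of Proposition~\ref{prop2} and which require a genuine multi-scale-type cancellation (zero mean in $\tau$ or in $y$); once this bookkeeping is done, the rest is a direct application of Lemma~\ref{key4} and Proposition~\ref{prop8}.
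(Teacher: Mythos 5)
Your proposal is correct and follows essentially the same route as the paper: both arrive at the localized identity $\int_0^1\!\int_Y[u_2,\rho\chi_{Z^*}\partial_\tau^2\varphi]\,dy\,d\tau=-\iiint_{Y\times Z^*\times\mathcal{T}}A(\nabla_xu_0+\nabla_yu_1+\nabla_zu_2)\cdot\nabla_z\varphi$ and then conclude via the boundedness of $A$, density, and Proposition~\ref{prop8}. The only cosmetic difference is that the paper obtains this identity by specializing $\psi_0=\psi_1=0$ in the already-derived global limit problem, whereas you re-run the limit passage directly with the rescaled test function $\varepsilon^2\theta\,\varphi^\varepsilon$; the scale bookkeeping you carry out is exactly the special case of the paper's computation in Section~4.1.
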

\begin{proof}
After the passage to the limit in (\ref{vareq}), we obtain
\begin{align*}
 &\left(\int_{Z^*}\rho(z)dz\right)
\int_{\Omega_T}\frac{\partial^2 u_0}{\partial t^2}\psi_0\, dxdt
 + \int_{\Omega_T}\left(\int_0^1\!\!\!\int_{Y}
 \left[u_2, \rho\chi_{Z^*}\frac{\partial^2
\psi_2}{\partial \tau^2}\right]dyd\tau\right)dxdt \\
& + |Z^*|\left( \int_0^1\!\!\!\int_{Y}\beta(y,\tau)dyd\tau \right)
\int_{\Omega_T}\frac{\partial u_0}{\partial t}\psi_0\,dx\,dt\\
& + \iiiint_{\Omega_T\times Y\times
Z^*\times\mathcal{T}}A(y,z)(\nabla_x u_0 + \nabla_y u_1 +
\nabla_{z} u_2)\cdot\left(\nabla_x \psi_0  + \nabla_y \psi_1 +
\nabla_{z}
\psi_2\right)dx\,dt\,dy\,dz\,d\tau \\
&= |Z^*| \int_{\Omega_T}f(x,t)\psi_0(x,t)dx\,dt,
\end{align*}
for all $(\psi_0,\psi_1,\psi_2)\in \mathcal{D}(\Omega_T)\times \left(\mathcal{D}(\Omega_T)\otimes \mathcal{C}^\infty_{\#}(Y)\right)\times \left(\mathcal{D}(\Omega_T)\otimes\mathcal{C}^\infty_{\text per}(Y)\otimes\mathcal{C}^\infty_{\text per}(\mathcal{T})\otimes \mathcal{C}^\infty_{\#\rho}(Z^*)\right)$. Taking in this equation $\psi_0 = \psi_1= 0$ and
$\psi_2 = \varphi\otimes\phi$ where $\varphi \in
\mathcal{D}(\Omega_T)$ and $\phi \in \mathcal{C}^\infty_{\text per}(Y\times
\mathcal{T})\otimes\mathcal{C}^\infty_{\#\rho}(Z^*)$, and using the arbitrariness of $\varphi$, we obtain almost everywhere in $\Omega_T$,
$$
\int_0^1\!\!\! \int_{Y\times \mathcal{T}}\left[
u_2 ,\rho \chi_{Z^*} \frac{\partial^2\phi}{\partial\tau^2}
\right]dy\,d\tau  = - \iiint_{ Y\times Z^* \times
\mathcal{T}}A(y,z)\left(\nabla_x u_0 + \nabla_{y} u_1 +
\nabla_z u_2\right)\cdot\left( \nabla_{z}\phi\right)dy\,dz\,d\tau.
$$
Let $(x,t)\in\Omega_T$ and consider the linear functional
\begin{align*}
\phi &\mapsto -\iiint_{Y\times Z^* \times \mathcal{T
}}A(y,z)(\nabla _{x}u_{0}+\nabla _{y}u_1+\nabla
_{z}u_{2})(\nabla _{z}\phi)\,dy\,dz\,d\tau.
\end{align*}
It is easy to see that the boundedness of the matrix $A$ implies
that the above linear functional is continuous on $\mathcal{C}^\infty_{\text per}(Y\times
\mathcal{T})\otimes\mathcal{C}^\infty_{\#\rho}(Z^*)$ for
the $L^2_{\text per}(Y\times \mathcal{T}, H^1_{\#\rho}(Z^*))$-norm. The Proposition~\ref{prop8} applies and gives \ $\rho
\chi_{Z^*}\frac{\partial^2u_2}{\partial\tau^2} \in L^2_{\text
per}(Y\times \mathcal{T}, (H^1_{\#\rho}(Z^*))') $  almost everywhere in $\Omega_T$. This completes the proof.
\end{proof}
The passage to the limit in (\ref{vareq}) as $E'\ni\varepsilon\to 0$ has proved the following result.
\begin{proposition}The triplet $(u_0,u_1,u_2)$ defined by Theorem~\ref{theo6} is a solution to the following variational problem:
\begin{equation}\left\{
\begin{aligned} \label{eqghp}
&(u_0, u_1, u_2)\in L^2(0,T;H^1_0(\Omega))\times
L^2(\Omega_T; H^1_{\#}(Y))\times
L^2(\Omega_T;L^2(Y\times \mathcal{T};H^1_{\#\rho}(Z^*))) \\
&\left(\int_{Z^*}\rho(z)dz\right)
\int_{\Omega_T}\frac{\partial^2 u_0}{\partial t^2}\psi_0\,dxdt
 + \int_{\Omega_T}\left(\iint_{Y\times\mathcal{T}}
 \left[\rho\chi_{Z^*}\frac{\partial^2
u_2}{\partial \tau^2},\psi_2\right]dyd\tau\right)\,dxdt \\
& + |Z^*|\left( \iint_{Y\times\mathcal{T}}\beta(y,\tau)dyd\tau \right)
\int_{\Omega_T}\frac{\partial u_0}{\partial t}\psi_0\,dxdt\\
&  +\iiiint_{\Omega_T\times Y\times
Z^*\times\mathcal{T}}A(y,z)(\nabla_x u_0 + \nabla_y u_1 +
\nabla_{z} u_2)\cdot\left(\nabla_x \psi_0  + \nabla_y \psi_1 +
\nabla_{z}
\psi_2\right)\,dxdtdydzd\tau \\
&= |Z^*| \int_{\Omega_T}f(x,t)\psi_0(x,t)dxdt\\
& \text{for all } (\psi_0,\psi_1,\psi_2)\in \mathcal{D}(\Omega_T)\times \left(\mathcal{D}(\Omega_T)\otimes \mathcal{C}^\infty_{\#}(Y)\right)\times \left(\mathcal{D}(\Omega_T)\otimes\mathcal{C}^\infty_{\text per}(Y)\otimes\mathcal{C}^\infty_{\text per}(\mathcal{T})\otimes \mathcal{C}^\infty_{\#\rho}(Z^*)\right).
\end{aligned}
\right.
\end{equation}
\end{proposition}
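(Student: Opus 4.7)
The plan is to substitute the three-scale test function $\psi_\varepsilon$ from (\ref{eq3.99}) into the variational identity (\ref{vareq}) and pass to the limit as $E'\ni\varepsilon\to 0$ in each of the four integrals separately. Matching the four resulting limits term-by-term reproduces exactly the four lines of (\ref{eqghp}). Throughout, I rely only on the uniform bounds of Propositions~\ref{prop2} and~\ref{prop5}, on the convergences (\ref{eqwms55})--(\ref{eqwms66}) from Theorem~\ref{theo6}, and on items (i)--(iv) of Remark~\ref{r2}. Because the admissible test triples $(\psi_0,\psi_1,\psi_2)$ listed in the proposition are precisely those used to build $\psi_\varepsilon$, no density argument is needed at the end.

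The $L_1$ term carrying $\rho^\varepsilon\chi_{Z^*}^\varepsilon$ is the most delicate. Expanding $\partial^2\psi_\varepsilon/\partial t^2$ as in the lines preceding (\ref{eq4.2}) produces five contributions. The two pieces with an explicit $\varepsilon$ or $\varepsilon^2$ prefactor vanish because $P_\varepsilon u_\varepsilon$ is bounded in $L^2(\Omega_T)$ while the test factors are bounded in $L^\infty$. The mixed $\partial^2\psi_2/(\partial t\partial\tau)$ contribution, after applying Remark~\ref{r2}(iii) together with the strong convergence $P_\varepsilon u_\varepsilon\to u_0$, gives the multiple integral of $\rho\chi_{Z^*}u_0\,\partial^2\psi_2/(\partial t\partial\tau)$ over $\Omega_T\times Y\times Z\times\mathcal{T}$, whose $\tau$-integral is zero by $\mathcal{T}$-periodicity of $\psi_2$. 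The order-one surviving term combines the strong convergence of $P_\varepsilon u_\varepsilon$ in $L^2(\Omega_T)$ (Theorem~\ref{theo1}) with the weak multi-scale convergence of $\rho^\varepsilon\chi_{Z^*}^\varepsilon$ against $\partial^2\psi_0/\partial t^2$, producing $(\int_{Z^*}\rho)\int_{\Omega_T} u_0\,\partial^2\psi_0/\partial t^2\,dx\,dt$, i.e.\ the (weak) first line of (\ref{eqghp}). The singular $1/\varepsilon^2$ piece is the main obstacle: it falls under Lemma~\ref{key4} applied with test function $\partial^2\psi_2/\partial\tau^2$, whose admissibility condition $\int_Z \chi_{Z^*}\rho\,\partial^2\psi_2/\partial\tau^2\,dz=0$ holds because $\psi_2\in\mathcal{C}^\infty_{\#\rho}(Z^*)$ in the $z$-variable. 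The lemma delivers the duality bracket that becomes the second line of (\ref{eqghp}).

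The remaining limits are routine. For $L_2$, the $\varepsilon$ and $\varepsilon^2$ corrections vanish by the bound (\ref{eq2.133}) on $\partial(P_\varepsilon u_\varepsilon)/\partial t$ together with the essential boundedness of $\beta$; the leading piece combines (\ref{eqwms555}) with Remark~\ref{r2}(iii) applied to the smooth admissible test $\beta(y,\tau)\chi_{Z^*}(z)\psi_0(x,t)$, yielding the factor $|Z^*|(\iint_{Y\times\mathcal{T}}\beta)$ of (\ref{eqghp}). For the elliptic term, a direct computation from (\ref{eq3.99}) shows that $\nabla\psi_\varepsilon$ strongly multi-scale converges to $\nabla_x\psi_0+\nabla_y\psi_1+\nabla_z\psi_2$; combining this with the weak multi-scale convergence (\ref{eqwms66}) of $\nabla(P_\varepsilon u_\varepsilon)$ and the smooth coefficient $\chi_{Z^*}(z)A(y,z)$ via Remark~\ref{r2}(iii) produces the fourth line of (\ref{eqghp}). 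The right-hand side converges to $|Z^*|\int_{\Omega_T} f\psi_0\,dx\,dt$ by Remark~\ref{r2}(iv). Collecting these five limits recovers the variational identity (\ref{eqghp}) for every admissible test triple, which completes the proof.
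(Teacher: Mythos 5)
Your proposal follows essentially the same route as the paper: insert the test function $\psi_\varepsilon$ of (\ref{eq3.99}) into (\ref{vareq}), expand $\partial^2\psi_\varepsilon/\partial t^2$, kill the $O(\varepsilon)$ and periodic mixed terms, handle the singular $1/\varepsilon^2$ term with Lemma~\ref{key4} (whose admissibility condition you correctly verify from $\psi_2\in\mathcal{C}^\infty_{\#\rho}(Z^*)$ in $z$), and pass to the limit in the remaining terms via the convergences of Theorem~\ref{theo6} and Remark~\ref{r2}. The argument is correct and matches the paper's proof term by term.
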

The variational problem (\ref{eqghp}), sometimes called global limit problem, is 
equivalent to the following system of three problems (\ref{eqghp3})-(\ref{eqghp2})-(\ref{eqghp1}):
\begin{equation}  \label{eqghp3}
\left\{\begin{aligned}
&\int_{\Omega_T}\left(\int_0^1\!\!\!\int_{Y}\left[\rho\chi_{Z^*}\frac{\partial^2
u_2}{\partial \tau^2},\psi_2\right]dy\,d\tau\right)\,dx\,dt\\
&+\iiiint_{\Omega_T\times Y\times
Z^*\times\mathcal{T}}A(y,z)(\nabla_x u_0 + \nabla_y u_1 +
\nabla_{z} u_2)\cdot\left(\nabla_{z}
\psi_2\right)\,dx\,dt\,dy\,dz\,d\tau =0\\
& \quad \text{ for all }\psi_2\in
\mathcal{D}(\Omega_T)\otimes\mathcal{C}^\infty_{\text per}(Y)\otimes \mathcal{C}^\infty_{\text per}(\mathcal{T})\otimes \mathcal{C}^\infty_{\#\rho}(Z^*);
\end{aligned}\right.
\end{equation}

\begin{equation}  \label{eqghp2}
\left\{\begin{aligned}
&\iiiint_{\Omega_T\times Y\times
Z^*\times\mathcal{T}}A(y,z)(\nabla_x u_0 + \nabla_y u_1 +
\nabla_{z} u_2)\cdot\left(\nabla_{z}
\psi_1\right)\,dx\,dt\,dy\,dz\,d\tau
 =0\\
 & \quad \text{for all }\psi_1\in
\mathcal{D}(\Omega_T)
\otimes\mathcal{C}^\infty_{\#}(Y);
\end{aligned}\right.
\end{equation}
and
\begin{equation}  \label{eqghp1}
\left\{\begin{aligned}
&\left(\int_{Z^*}\rho(z)dz\right)
\int_{\Omega_T}\frac{\partial^2 u_0}{\partial t^2}\psi_0\,dxdt +
|Z^*|\left( \int_0^1\!\!\!\int_{Y}\beta(y,\tau)dyd\tau \right)
\int_{\Omega_T}\frac{\partial u_0}{\partial t}\psi_0\,dxdt \\ &+
\iiiint_{\Omega_T\times Y\times
Z^*\times\mathcal{T}}A(y,z)(\nabla_x u_0 + \nabla_y u_1 +
\nabla_{z} u_2)\cdot\left(\nabla_x
\psi_0\right)\,dx\,dt\,dy\,dz\,d\tau \\
 &= |Z^*|
\int_{\Omega_T}f(x,t)\psi_0(x,t)dx\,dt \ \ \  \ \ \text{for all }
\psi_0\in  \mathcal{D}(\Omega_T).
\end{aligned}\right.
\end{equation}

We are now in a position to derive equations describing the
microscopic, the mesoscopic and the macroscopic behaviours of the phenomenon modelled by (\ref{eq1}). We start at the microscopic scale.
\subsection{The microscopic problem}
Taking in (\ref{eqghp3}),  $\psi_2= \varphi\otimes\theta\otimes\phi$, with
$\varphi \in \mathcal{D}(\Omega_T)$, $\theta \in \mathcal{C}^\infty_{\text per}(Y)$  and $\phi\in  \mathcal{C}^\infty_{\#\rho}(Z^*)\otimes \mathcal{C}^\infty_{\text per}(\mathcal{T}) $ we
obtain by the arbitrariness of $\varphi$ and $\theta$ and for almost all $(x,t,y)\in \Omega_T\times Y$,
$$
\int_0^1\left[\rho\chi_{Z^*}\frac{\partial^2
u_2}{\partial \tau^2},\phi\right] d\tau = -\int_0^1\!\!\!\int_{Z^*}A(y,z)(\nabla_x u_0 + \nabla_y u_1 +
\nabla_{z} u_2)\cdot\left(\nabla_{z}
\phi\right)dzd\tau.
$$
Therefore, for almost all $(x,t,y)\in \Omega_T\times Y$, the function $u_2(x,t,y)$ solves the following variational problem
\begin{equation}\label{eqva1}
\left\{\begin{aligned}
& u_2(x,t,y) \in L^2_{\text per}(\mathcal{T};H^1_{\#\rho}(Z^*)), \\
&\int_0^1\left[\rho\chi_{Z^*}\frac{\partial^2
u_2}{\partial \tau^2},v\right]d\tau +  \int_0^1\!\!\!\int_{Z^*}A(y,z)(\nabla_{z}u_2)\cdot\left(\nabla_{z}
v\right)dzd\tau \\&\qquad\qquad = -\int_0^1\!\!\!\int_{Z^*}A(y,z)(\nabla_x u_0 + \nabla_y u_1)\cdot\left(\nabla_{z}
v\right)dzd\tau\\
& \text{for all }\  v \in   L^2_{\text per}(\mathcal{T};H^1_{\#\rho}(Z^*)).
\end{aligned}\right.
\end{equation}
Moreover, the variational problem (\ref{eqva1}) admits a solution uniquely defined on $Z^*\times\mathcal{T}$ since, if $v_1, v_2\in  L^2_{\text per}(\mathcal{T};H^1_{\#\rho}(Z^*))$ are two solutions,  then $v= v_1-v_2\in  L^2_{\text per}(\mathcal{T};H^1_{\#\rho}(Z^*))$ will be solution of the linear homogeneous equation
$$
\rho(z)\frac{\partial^2
v}{\partial \tau^2}+\text{div}_{z}\left(A(y,z)(\nabla_{z}v)\right) = 0 \qquad \text{in }\ \
Z^*\times\mathcal{T}
$$
with zero Cauchy data, we therefore deduce  that $v=0$  in $Z^*\times\mathcal{T}$ and \textit{uniqueness} follows. 

As customary, let $y\in Y$ be fixed and let  $ \chi_i(y)$ $(1\leq i\leq N)$ be the \textit{unique} solution to the microscopic problem
\begin{equation}\label{eq4.13}
\left\{\begin{aligned}
&\chi_i(y)\in L^2_{\text per}(\mathcal{T};H^1_{\#\rho}(Z^*)),\\
& \int_0^1\left[\rho\chi_{Z^*}\frac{\partial^2
\chi_i}{\partial \tau^2},\phi\right]d\tau +\int_0^1\!\!\!  \int_{
Z^*}A(y,z)(\nabla_{z}\chi_i)\cdot\left(\nabla_{z}
\phi\right)dzd\tau\\
 & = -\int_0^1\!\!\!  \int_{
Z^*}\sum_{1=1}^N a_{ik}(y,z)\frac{\partial \phi}{\partial z_k}dzd\tau,\\
&\text{ for all }\phi \in L^2_{\text per}(\mathcal{T};H^1_{\#\rho}(Z^*))\qquad (i\in\{1,\cdots, N\}).
\end{aligned}
\right.
\end{equation}
Multiplying the $\text{i}^{th}$ equation of (\ref{eq4.13}) by $\frac{\partial u_0}{\partial x_i}+\frac{\partial u_1}{\partial y_i}$ and then summing the resulting equations over $i=1,\cdots, N$, it appears that the function $(x,t,y,z,\tau)\mapsto \chi(y, z,\tau)\cdot \left( \nabla_x u_0 (x,t) + \nabla_{y}u_1 (x,t,y)\right) $   is a solution to (\ref{eqva1}). Hence, setting $\chi=(\chi_i)_{1\leq i\leq N}$ it holds almost everywhere in $\Omega_T\times Y\times Z^*\times\mathcal{T}$ that
\begin{equation}\label{eq4.14}
 u_2(x,t,y,z,\tau) = \chi(y, z,\tau)\cdot \left( \nabla_x u_0 (x,t) + \nabla_{y}u_1 (x,t,y) \right).
 \end{equation}
On putting
\begin{equation*}
(\nabla_{z}\chi)_{ij}=\frac{\partial \chi_i}{\partial z_j}\qquad
(1\leq i,j \leq N),
\end{equation*}
we can deduce from (\ref{eq4.14}) that
\begin{equation}  \label{eqmi}
\nabla_{z} u_2 = \nabla_{z}\chi\cdot(\nabla_x u_0 + \nabla_{y} u_1)\qquad {\text a.e.\ in\ } \Omega_T\times Y\times Z^*\times\mathcal{T}.
\end{equation}

\subsection{The mesoscopic problem} Taking $\psi_1 = \varphi\otimes \phi\otimes\theta$ with $\varphi\in \mathcal{D}(\Omega_T)$, $\theta \in \mathcal{C}^\infty_{\text per}(\mathcal{T})$ and   $\phi \in \mathcal{C}^\infty_{\#}(Y)$ in (\ref{eqghp2}) and using (\ref{eqmi}) and the arbitrariness of $\varphi$ and $\theta$, we realise that for almost every $(x,t)\in \Omega_T$, the function $u_1(x,t)$ is the unique solution to the following variational problem (where $I$ denotes the $N\times N$ identity matrix)
\begin{equation}\label{eq4.16}
\left\{\begin{aligned}
&u_1(x,t)\in H^1_{\#}(Y),\\
& \int_{ Y}\!\!\left(\int_0^1\!\!\!  \int_{
Z^*}\!\!A(I+ \nabla_{z}\chi)dzd\tau\right)  \nabla_y u_1 \cdot \nabla_{y}
\phi\, dy
= - \int_{ Y}\!\!\left(\int_0^1\!\!\!  \int_{
Z^*}\!\!A(I+ \nabla_{z}\chi)dzd\tau\right)  \nabla_x u_0\cdot \nabla_{y}
\phi\, dy\\ &\text{  for all }\ \phi \in H^1_{\#}(Y).
\end{aligned}\right.
\end{equation}
For the sake of shortness, we put
$$
\tilde{A}(y)= \int_0^1\!\!\!  \int_{
Z^*}A(y,z)(I+ \nabla_{z}\chi)dzd\tau\qquad (y\in Y),
$$
and recall that there exists a unique  $\theta = (\theta_i)_{1\leq i\leq N}\in \left(H^1_{\#}(Y)\right)^N$ solution to the mesoscopic problem
\begin{equation}  \label{eqme}
\left\{\begin{aligned}
&\theta_i\in H^1_{\#}(Y),\\&\int_{Y}\tilde{A}  \nabla_y\theta_i\cdot \nabla_{y}
v\,dy
 =- \int_{Y} \sum_k \tilde{a}_{ik}\frac{\partial v}{\partial y_k}  dy\\
& \text{for all }\  v \in H^1_{\#}(Y)\qquad \qquad\qquad\qquad \qquad\qquad(i=1,\cdots,N).
\end{aligned}\right.
\end{equation}
It is easy to check that the function $(x,t,y)\mapsto\theta(y)\cdot\nabla_x u_0(x,t)$ is also a solution to (\ref{eq4.16}) so that by the uniqueness of the solution to (\ref{eq4.16}) we have
$$
u_1(x,t,y) = \theta(y)\cdot\nabla_x u_0(x,t)\qquad \text{for a.e. }(x,t,y)\in\Omega_T\times Y.
$$
On setting
$$    \left( \nabla_y\theta \right)_{ij} =\frac{\partial \theta_i}{\partial y_j} \qquad (1\leq i,j\leq N),
$$
it follows that
\begin{equation}\label{eq4.18}
\nabla_y u_1(x,t,y)= \nabla_y\theta(y)\cdot\nabla_x u_0(x,t)\quad  \text{a.e. in  } \Omega_T\times Y.
\end{equation}

\subsection{The macroscopic problem}As far as (\ref{eqghp1}) is concerned, we make use of (\ref{eqmi}) and (\ref{eq4.18}) to write
 $$
 \nabla_x u_0 + \nabla_zu_1 +
\nabla_{z} u_2 = \left(I+ \nabla_{z}\chi \right)\left(I + \nabla_y\theta \right)\nabla_x  u_0\ \ \ \text{a.e. in }\ \Omega_T\times Y\times Z^*\times\mathcal{T}
 $$
where the functions $ \chi$ and  $\theta $ are the solutions to the problems (\ref{eq4.13}) and (\ref{eqme}), respectively.  We have
\begin{equation}
\begin{aligned}
&\iiiint_{\Omega_T\times Y\times
Z^*\times\mathcal{T}}A(y,z)(\nabla_x u_0 + \nabla_y u_1 +
\nabla_{z} u_2)\cdot\left(\nabla_x
\psi_0\right)\,dx\,dt\,dy\,dz\,d\tau \\
&= \iint_{\Omega_T\times Y }\left(\int_0^1\!\!\!  \int_{
Z^*}\!A(y,z)\left(I+ \nabla_{z}\chi \right)dz d\tau \right)\left(I + \nabla_y\theta \right)\nabla_xu_0\cdot\nabla_x\psi_0\, dxdtdy\\
&=\iint_{\Omega_T\times Y}\tilde{A}(y)(I + \nabla_y\theta)\nabla_x u_0\cdot\nabla_x\psi_0\, dxdtdy\\
&=\int_{\Omega_T}\left(\int_{Y}\tilde{A}(y)(I + \nabla_y\theta)dy\right)\nabla_xu_0\cdot\nabla_x\psi_0\, dxdt\\
&=\int_{\Omega_T}\hat{A}\nabla_x u_0\cdot\nabla_x\psi_0\, dxdt,
\end{aligned}
\end{equation}
where $ \hat{A}= \int_{Y}\tilde{A}(I + \nabla_y\theta)dy$. With this notation, the variational problem (\ref{eqghp1}) implies
\begin{equation}  \label{eqghp12}
\left\{\begin{aligned}
&\left(\int_{Z^*}\rho(z)dz\right)
\int_{\Omega_T}\frac{\partial^2 u_0}{\partial t^2}\psi_0\,dxdt +
|Z^*|\left( \int_0^1\!\!\!  \int_{
Y}\beta(y,\tau)dyd\tau \right)
\int_{\Omega_T}\frac{\partial u_0}{\partial t}\phi\,dxdt \\ &-
\int_{\Omega_T}\operatorname{div}\left(\hat{A}\nabla_x u_0 \right)\psi_0 dxdt \\
 &= |Z^*|
\int_{\Omega_T}f(x,t)\psi_0(x,t)dxdt \ \ \  \ \ \text{for all }
\psi_0\in  \mathcal{D}(\Omega_T),
\end{aligned}\right.
\end{equation}
which is nothing but the weak formulation of
\begin{equation}  \label{eqghp13}
\mathcal{M}_{Z^*}(\rho)
\frac{\partial^2 u_0}{\partial t^2} +
\mathcal{M}_{Y\times \mathcal{T}}(\beta)
\frac{\partial u_0}{\partial t}
-\frac{1}{|Z^*|}\operatorname{div}\left(\hat{A}\nabla u_0 \right) =f(x,t)\quad\text{in }\ \Omega_T.
\end{equation}
We are almost done with the proof of the following theorem which is the main result of the paper.

\begin{theorem}
Assume that the hypotheses \textbf{A1-A3} hold and let $u_\varepsilon$ $(\varepsilon>0)$ be the unique solution to (\ref{eq1}). Let $u_0$ be the function  defined by Theorem~\ref{theo6}   and solution to the variational problem (\ref{eqghp1}).  Then as $0< \varepsilon\to 0$ we have
\begin{equation}\label{eq4.22}
u_\varepsilon \to u_0 \qquad \text{ in }\quad L^2(\Omega_T),
\end{equation}

where $u_0\in L^2(0,T; H^1_0(\Omega))$ with $\frac{\partial u_0}{\partial t}\in L^2(0,T:L^2(\Omega))$, is the unique solution to\begin{equation}\label{eqfinal}
\left\{
\begin{aligned}
\mathcal{M}_{Z^*}(\rho)
\frac{\partial^2 u_0}{\partial t^2} +
\mathcal{M}_{Y\times\mathcal{T}}(\beta)
\frac{\partial u_0}{\partial t}
-\frac{1}{|Z^*|}\operatorname{div}\left(\hat{A}\nabla_xu_0 \right)
& = f(x,t) \quad\text{ in }\  \Omega\times (0,T),\\
u_{0} &=  \ 0   \quad\text{on }\  \partial\Omega\times (0,T),\\
 u_{0}(x,0) & = \ u^{0}(x) \quad\text{in }\ \Omega,\\
 \mathcal{M}_{Z^*}(\rho)\frac{\partial u_0}{\partial t}(x,0)& =\mathcal{M}_{Z^*}(\sqrt{\rho}) v^0(x) \quad\text{in }\ \Omega.
 \end{aligned}
\right.
\end{equation}
\end{theorem}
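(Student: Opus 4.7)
The plan is to recognise that almost all of the analytical content of the theorem is already in hand: by Theorem~\ref{theo6}, along a subsequence $E'$ we have $P_\varepsilon u_\varepsilon \to u_0$ strongly in $L^2(\Omega_T)$, and Section~4 has already produced the macroscopic equation in the form (\ref{eqghp13}). Thus the strong convergence (\ref{eq4.22}) and the first line of (\ref{eqfinal}) require nothing new. The zero Dirichlet condition on $\partial\Omega\times(0,T)$ is inherited from the regularity $u_0\in L^2(0,T;H^1_0(\Omega))$ provided by Theorem~\ref{theo6}. What genuinely remains is (a) the identification of the two initial conditions and (b) a uniqueness argument for the target problem (\ref{eqfinal}), which then upgrades the subsequential convergence to convergence of the whole family $(u_\varepsilon)_{0<\varepsilon<1}$.

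For the initial value $u_0(\cdot,0)=u^0$, I would exploit Proposition~\ref{prop5}: the family $(P_\varepsilon u_\varepsilon)$ is bounded in the space $W=\{u\in L^2(0,T;H^1_0(\Omega)): u'\in L^2(0,T;H^{-1}(\Omega))\}$, so, up to a further subsequence, it converges weakly in $W$ to $u_0$. Since the trace map $W\to\mathcal{C}([0,T];L^2(\Omega))$ is continuous (hence weakly continuous), we obtain $P_\varepsilon u_\varepsilon(\cdot,0)\rightharpoonup u_0(\cdot,0)$ in $L^2(\Omega)$; as $u_\varepsilon(\cdot,0)=u^0$ on $\Omega^\varepsilon$ and $|\Omega\setminus\Omega^\varepsilon|\to 0$ (the holes have size $\varepsilon^2$), the extension property $P_\varepsilon u_\varepsilon(\cdot,0)\to u^0$ in $L^2(\Omega)$ identifies $u_0(\cdot,0)=u^0$. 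For the initial velocity, I would test the equality $\rho^\varepsilon u_\varepsilon'(0)=\sqrt{\rho^\varepsilon}\,v^0$ against an arbitrary $\phi\in\mathcal{D}(\Omega)$ and let $\varepsilon\to 0$. The right-hand side converges, by Remark~\ref{r2}(ii) applied to $\chi_{Z^*}\sqrt{\rho}$, to $\bigl(\int_{Z^*}\!\sqrt{\rho(z)}\,dz\bigr)\int_\Omega v^0\phi\,dx$. The left-hand side is handled using the weak multi-scale convergence (\ref{eqwms555}) of $\partial_t(P_\varepsilon u_\varepsilon)$ together with the time-continuity supplied by the estimate (\ref{est4}) on $\rho^\varepsilon u_\varepsilon''$: this yields $\rho^\varepsilon u_\varepsilon'(0)\rightharpoonup \bigl(\int_{Z^*}\rho(z)dz\bigr)u_0'(0)$ in a suitable distributional sense, producing the stated relation $\mathcal{M}_{Z^*}(\rho)\,u_0'(x,0)=\mathcal{M}_{Z^*}(\sqrt{\rho})v^0(x)$ in $\Omega$.

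Uniqueness of solutions to (\ref{eqfinal}) follows by the classical energy method for weakly damped hyperbolic equations: if $w$ is the difference of two solutions with vanishing data, testing with $w'$, integrating on $(0,t)$ and invoking $\mathcal{M}_{Z^*}(\rho)>0$, the coercivity of $\hat{A}$ (inherited from the coercivity of $A$ through the cell problems (\ref{eq4.13}) and (\ref{eqme})) and $\mathcal{M}_{Y\times\mathcal{T}}(\beta)\geq\alpha>0$ force $w\equiv 0$. The limit $u_0$ being uniquely determined, a standard compactness/uniqueness-of-the-limit argument implies that the convergence (\ref{eq4.22}) holds not only along the extracted subsequence but along the whole family $(u_\varepsilon)_{0<\varepsilon<1}$.

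The main obstacle I anticipate is the rigorous passage to the limit in the second initial condition $\rho^\varepsilon u_\varepsilon'(0)=\sqrt{\rho^\varepsilon}v^0$: the factor $\rho^\varepsilon$ oscillates on the finest scale $\varepsilon^2$ and is only non-negative (not uniformly positive), while $u_\varepsilon'(0)$ is available only through the integrated/multi-scale information on $\partial_t(P_\varepsilon u_\varepsilon)$ from (\ref{eqwms555}) and the second-time-derivative bound (\ref{est4}). Making the evaluation at $t=0$ rigorous requires combining the multi-scale convergence with a continuity-in-time argument that respects the extension operator $P_\varepsilon$ and the characteristic function $\chi_{Z^*}^\varepsilon$; everything else reduces to bookkeeping of the computations already performed in Sections~3 and~4.
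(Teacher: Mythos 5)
Your overall architecture (strong convergence and the limit PDE already in hand; remaining work consisting of the two initial conditions, uniqueness, and upgrading to convergence of the whole family) matches the paper, and your uniqueness/whole-sequence argument is exactly the paper's. But there are two genuine problems. First, your identification of $u_0(\cdot,0)=u^0$ rests on the claim $|\Omega\setminus\Omega^\varepsilon|\to 0$, which is false in this geometry: the holes $\varepsilon^2(k+\Theta)$ have size $\varepsilon^2$ but are also distributed with period $\varepsilon^2$, so $|\Omega\setminus\Omega^\varepsilon|\to|\Theta|\,|\Omega|>0$ --- this is precisely why the limit problem carries the factors $|Z^*|$ and $\int_{Z^*}$ rather than $|Z|$ and $\int_Z$. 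The conclusion can still be reached (for instance from $\chi_{Z^*}^\varepsilon\bigl(P_\varepsilon u_\varepsilon(\cdot,0)-u^0\bigr)=0$ together with a multi-scale limit passage, or, as the paper does, from the continuity of $P_\varepsilon u_\varepsilon$ and $u_0$ as elements of $\mathcal{C}([0,T];L^2(\Omega))$ combined with the strong convergence in $L^2(\Omega_T)$), but not by the route you give.

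Second, and more seriously, the identification of the initial velocity is left unresolved. You propose to ``test $\rho^\varepsilon u_\varepsilon'(0)=\sqrt{\rho^\varepsilon}\,v^0$ against $\phi$ and let $\varepsilon\to0$'', invoking (\ref{eqwms555}) and (\ref{est4}); but weak multi-scale convergence is a statement about integrals over $\Omega_T$ and carries no information about the trace at the single time $t=0$, and the bound (\ref{est4}) lives in the $\varepsilon$-dependent spaces $L^2(0,T;V'_\varepsilon)$, so no uniform-in-$\varepsilon$ continuity in time of $\rho^\varepsilon u_\varepsilon'$ with values in a fixed space is available. You correctly flag this as the main obstacle, but the proof must actually overcome it, and your proposal does not. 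The paper's device is the classical one: test the $\varepsilon$-equation with $\phi(x)\varphi(t)$ where $\varphi(0)=1$ and $\varphi(T)=0$, and integrate by parts once in $t$ in the $\rho^\varepsilon u_\varepsilon''$ term, so that the initial datum enters only through the boundary term $-\int_{\Omega^\varepsilon}\sqrt{\rho^\varepsilon}\,v^0\phi\,dx$ while the remaining term $-\int_{\Omega^\varepsilon_T}\rho^\varepsilon u_\varepsilon'\,\phi\,\varphi'\,dx\,dt$ is a space-time integral to which (\ref{eqwms555}) and Remark~\ref{r2} apply; passing to the limit and then undoing the integration by parts against the limit equation (\ref{eqfinal}) isolates $\int_\Omega\bigl(\mathcal{M}_{Z^*}(\rho)\,u_0'(x,0)-\mathcal{M}_{Z^*}(\sqrt{\rho})\,v^0(x)\bigr)\phi(x)\,dx=0$. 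Without this (or an equivalent) device, your argument for the last line of (\ref{eqfinal}) does not go through.
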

\begin{proof}
The arbitrariness of the fundamental sequence $E$ in the limit passage in this section and the uniqueness of the solution to (\ref{eqfinal}) prove that we have (\ref{eq4.22}) for the whole  generalised sequence $\varepsilon>0$. Hence, it remains to justify the initial conditions appearing in the macroscopic problem (\ref{eqfinal}).
We start by justifying that $u_0(x,0)= u^0(x)$ for almost every $x\in\Omega$. This is obvious since $u_0, P_\varepsilon u_\varepsilon\in C([0,T]; L^2(\Omega))$\ ($\varepsilon>0 $) and $P_\varepsilon u_\varepsilon \to u_0 $ strongly in $ L^2(\Omega_T)$ with $P_\varepsilon u_\varepsilon(x,0)= u^0(x)$.

Next, we justify the initial condition satisfied by  $\frac{\partial u_0}{\partial t}(x,0)$. 
We consider a function $\phi \otimes\varphi$ where $\phi \in \mathcal{D}(\Omega)$ and
$\varphi \in \mathcal{D}([0,T])$ with $\varphi(T)= 0$ and $\varphi(0)=1$. 
After  multiplying  the main equation in (\ref{eq1}) by  $\phi(x) \varphi(t)$, we integrate over $\Omega^\varepsilon_T$ and perform an integration by parts with respect to the variable $t$ in the integral containing the term $\frac{\partial^2 u_\varepsilon}{\partial t^2}\varphi$ (with the initial condition $ \rho(\frac{x}{\varepsilon^2})\frac{\partial u_\varepsilon}{\partial t}(x,0)= \rho^{\frac{1}{2}}(\frac{x}{\varepsilon^2})v^0(x)$ in mind), we obtain:
\begin{equation*}
\begin{aligned}
&-\int_{\Omega^\varepsilon} \rho^{\frac{1}{2}}\left(\frac{x}{\varepsilon^2}\right)v^0(x)\phi(x)dx
-\int_{\Omega^\varepsilon_T} \rho\left(\frac{x}{\varepsilon^2}\right)\phi(x)\frac{\partial u_\varepsilon}{\partial t}\varphi'(t)dxdt\\ & \qquad
+\int_{\Omega^\varepsilon_T}\beta\left( \frac{x}{\varepsilon}, \frac{t}{\varepsilon^2}\right)\frac{\partial u_\varepsilon}{\partial t}\phi(x)\varphi(t)dxdt 
 + \int_{\Omega^\varepsilon_T}A\left(\frac{x}{\varepsilon
}, \frac{x}{\varepsilon ^{2}}\right)\varphi(t) \nabla u_{\varepsilon }.\nabla\phi(x) dxdt\\& \qquad\qquad = \int_{\Omega^\varepsilon_T}f(x,t)\phi(x)\varphi(t)dxdt. 
\end{aligned}
\end{equation*}
When $0<\varepsilon \to 0$, using the same arguments  as in the derivation of the global limit problem , we obtain 
\begin{equation}\label{eq4.25}
\begin{aligned}
&-\mathcal{M}_{Z^*}(\sqrt{\rho})\int_{\Omega}v^0(x)\phi(x)dx
-\mathcal{M}_{Z ^*}(\rho)\int_{\Omega_T}\frac{\partial u_0}{\partial t}\varphi'(t)\phi(x)dxdt \\ &\qquad 
+\mathcal{M}_{Y\times\mathcal{T}}(\beta)\int_{\Omega_T}\frac{\partial u_0}{\partial t}\varphi(t)\phi(x)dxdt- \frac{1}{|Z^*|}
\int_{\Omega_T}\operatorname{div}\left(\hat{A}\nabla u_0 \right)\phi(x)\varphi(t)dxdt\\ &\qquad\qquad =\int_{\Omega_T}f(x,t)\phi(x)\varphi(t)dxdt.
\end{aligned}
\end{equation}
Keeping (\ref{eqfinal}) in mind, an integration by parts with respect to the variable $t$ in the second term of (\ref{eq4.25}) yields
\begin{equation}
\int_{\Omega}\left(\mathcal{M}_{Z^*}(\rho)\frac{\partial u_0}{\partial t}(x,0)-\mathcal{M}_{Z^*}(\sqrt{\rho})v^0(x)\right)\phi(x)dx=0,
\end{equation}
which implies 
$$
\mathcal{M}_{Z^*}(\rho)\frac{\partial u_0}{\partial t}(x,0) =\mathcal{M}_{Z^*}(\sqrt{\rho}) v^0(x) \quad\text{in }\ \Omega.
$$
The proof is completed. 
\end{proof}

\end{document}